\newtheorem{theorem}{Theorem}[section]
\newtheorem{lemma}[theorem]{Lemma}
\newtheorem{proposition}[theorem]{Proposition}
\newtheorem{corollary}[theorem]{Corollary}
\newtheorem*{theoremA}{Theorem A}
\newtheorem*{theoremB}{Theorem B}
\def\al{\alpha}
\def\be{\beta}
\def\g{\gamma}
\def\G{\Gamma}
\def\wh{\widehat}
\def\C{\mbox{$\mathbb C$}}
\def\FFF{{\mathcal F}}
\def\III{{\mathcal I}}
\def\JJJ{{\mathcal J}}
\def\LLL{{\mathcal L}}
\def\PPP{{\mathcal P}}
\def\UUU{{\mathcal U}}
\def\VVV{{\mathcal V}}
\def\sC{{\mathscr C}}
\def\sA{{\mathscr A}}
\def\cbar{\overline{\C}}
\def\smm{{\backslash}}
\begin{document}
\title{On the cycles of components \\ of disconnected Julia sets
\footnote{2010 Mathematics Subject Classification: 37F10, 37F20}}
\author{Guizhen Cui\thanks{The first author is supported by the NSFC Grant No. 11688101
and Key Research Program of Frontier Sciences, CAS, Grant No. QYZDJ-SSW-SYS005.}
\and Wenjuan Peng\thanks{The second author was supported by the NSFC Grant No. 11471317.}}
\date{\today}
\maketitle
\begin{abstract}
For any integers $d\ge 3$ and $n\ge 1$, we construct a hyperbolic rational map of degree $d$ such that it has $n$ cycles of the connected components of its Julia set except single points and Jordan curves.
\end{abstract}

\section{Introduction}
Let $f: \cbar\to\cbar$ be a rational map of the Riemann sphere with $\deg f\ge 2$. Denote by $\JJJ_f$ and $\FFF_f$ the Julia set and the Fatou set of $f$ respectively. Refer to \cite{Mi} for their definitions and basic properties. It is classical that $\JJJ_f$ is a non-empty compact set.

Assume that $\JJJ_f$ is disconnected. Let $K$ be a {\bf Julia component} of $f$, i.e., a connected component of $\JJJ_f$. Then each component of $f^{-1}(K)$ is also a Julia component. Thus $K$ is either {\bf periodic} if $f^p(K)=K$ for some integer $p\ge 1$, or {\bf eventually periodic} if $f^k(K)$ is periodic for some integer $k\ge 0$, or {\bf wandering} if $f^n(K)$ is disjoint from $f^m(K)$ for any integers $n>m\ge 0$.

If $K$ is periodic with period $p\ge 1$, then either $\deg(f^p|_K)=1$ and $K$ is a single point, or $\deg(f^p|_K)>1$ and there exists a rational map $g$ with $\deg g=\deg(f^p|_K)$ such that $(f^p, K)$ is quasi-conformally conjuagate to $(g, \JJJ_g)$ in a neighborhood of $K$ (see \cite{Mc1}). If $K$ is wandering and $f$ is a polynomial, then $K$ is a single point (refer to \cite{BH, KS, QY}).

The situation for general rational maps is more complicated. There are examples of rational maps whose wandering components of their Julia sets are Jordan curves \cite{Mc1}. In fact, a wandering Julia component of a geometrically finite rational map is either a single point or a Jordan curve \cite{PT1}.

A periodic Julia component $K$ is called {\bf simple} if either $K$ is a single point or each component of $f^{-n}(K)$ is a Jordan curve for any $n\ge 0$. It is called {\bf complex} otherwise. Denote
$$
N(f)=\#\{\text{cycles of complex periodic Julia components of } f\}.
$$
Refer to \cite{PT1} for the next theorem.

\begin{theoremA}
Let $f$ be a geometrically finite rational map with disconnected Julia set. Then $N(f)<\infty$ and each wandering Julia component of $f$ is either a single point or a Jordan curve.
\end{theoremA}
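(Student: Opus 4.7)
My plan splits the statement into the finiteness assertion and the wandering assertion, treating the latter as the main difficulty.

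\emph{Finiteness of $N(f)$.} Because $f$ is geometrically finite, the postcritical set $P(f)$ meets $\JJJ_f$ in a finite set of preperiodic points, and every periodic Fatou component is an attracting or parabolic basin. Let $K$ be a complex periodic Julia component of period $p$. By the McMullen local conjugacy recalled in the excerpt, $(f^p, K)$ is quasi-conformally conjugate to $(g_K, \JJJ_{g_K})$ for some rational map $g_K$ with $\deg g_K = \deg(f^p|_K)\ge 2$. If $g_K$ had no critical points on $\JJJ_{g_K}$ or on any iterated preimage of it, then $\JJJ_{g_K}$ together with all of its preimages would be Jordan curves, so $K$ would be simple. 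Hence complexity forces a critical point of $f$ to lie in the grand orbit of $K$. Different cycles of complex components have disjoint grand orbits, so each cycle claims distinct critical points, yielding the bound $N(f)\le 2\deg f - 2$.

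\emph{Wandering components.} Let $K$ be a wandering Julia component with orbit $K_n:=f^n(K)$. Since the $K_n$ are pairwise disjoint and $P(f)\cap\JJJ_f$ is finite, for $n\ge N_0$ the component $K_n$ is uniformly separated from $P(f)\cap\JJJ_f$; the orbit of any critical point in $\FFF_f$ is eventually absorbed into a fixed periodic Fatou component, so I can in addition choose small neighborhoods of $K_n$ disjoint from $P(f)\cap\FFF_f$. Concretely, for each $n\ge N_0$ I would pick a topological disk $U_n\supset K_n$ bounded by Jordan curves in $\FFF_f$, with closure disjoint from $P(f)$ and from the other Julia components, containing an annulus $A_n\subset U_n\setminus K_n$ of uniformly positive modulus separating $K_n$ from $\partial U_n$. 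Because $U_n$ avoids $P(f)$, the map $f$ restricts to an unbranched covering over $U_n$, so iterated pullback of $A_{n+k}$ under $f^k$ produces a nested family of annuli $B_{n,k}\subset U_n$ around $K_n$, each of modulus $\ge \mathrm{mod}(A_{n+k})$. The topology of $K_n$ is then read off from the limit of the $B_{n,k}$: either the moduli sum diverges and nested disks shrink to a point (so $K$ is a single point), or the annuli admit a well-defined core Jordan curve in the limit (so $K$ is a Jordan curve). A Gr\"otzsch-type modulus/length estimate, together with the fact that each pullback is unbranched so distortion is controlled, should rule out intermediate topological types.

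\emph{Main obstacle.} The delicate step is the wandering case: arranging the neighborhoods $U_n$ with uniformly bounded geometry so that the pulled-back annuli have controlled modulus, and then promoting the modulus dichotomy to the sharp topological conclusion, excluding Cantor-like or dendrite-like limits of the nested disks. The finiteness count is comparatively soft once the McMullen conjugacy and critical-point bookkeeping are in place.
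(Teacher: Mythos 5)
The paper does not prove Theorem A; it cites the result to Pilgrim--Tan \cite{PT1}, so there is no internal proof to measure your sketch against. Judged on its own terms, your outline contains two genuine gaps, one per half of the statement.

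\emph{Finiteness.} Your justification conflates the dynamics of the renormalized model $g_K$ with the dynamics of $f$ on the global Julia set. The conjugacy $(f^p,K)\simeq(g_K,\JJJ_{g_K})$ holds only on a neighborhood of $K$; the components of $f^{-n}(K)$ that witness the complexity of $K$ are different Julia components, possibly far from $K$ and entirely outside the region where the conjugacy is defined, so ``$g_K$ has no critical point on iterated preimages of $\JJJ_{g_K}$'' says nothing about them (in fact $\JJJ_{g_K}$ is totally invariant under $g_K$, so that phrase is empty). The correct soft argument is different: a complex periodic component is never a point, hence $\deg(f^p|_K)\ge 2$, hence $f^p$ restricted to a small neighborhood of the cycle has critical points of $f$; distinct cycles have disjoint orbits, hence disjoint small neighborhoods, hence claim disjoint critical points. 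That recovers a bound of the form you wanted, but not by the reasoning you gave.

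\emph{Wandering components.} This is where the real content lies, and the step you flag as the ``main obstacle'' is precisely the step a proof must supply. Nested annuli of bounded-below modulus around $K_n$ do not force $K_n$ to be a Jordan curve; the intersection of nested disks is always a continuum, and without further input that continuum could be a dendrite or some other non-locally-connected compactum. Your Gr\"otzsch estimate gives the point/non-point dichotomy, but it does not distinguish a Jordan curve from any other nondegenerate continuum. What is missing is where geometric finiteness actually enters: for $n$ large, $K_n$ sits in a region where $f$ is uniformly expanding (with respect to an orbifold or Poincar\'e-type metric away from the attracting and parabolic basins), and each $f: U_{n+1}\to U_n$ is an unbranched covering of degree $\ge 1$. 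It is this combination --- unbranched annular coverings together with uniform expansion --- that lets one promote the nested-annulus picture to the conclusion that $K_n$ is a quasicircle or a point (this is essentially McMullen's argument in \cite{Mc1}, refined in \cite{PT1}). Without invoking expansion, your modulus bookkeeping alone cannot rule out the bad continua, and the proof does not close.
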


A natural problem is whether Theorem A holds for general rational maps. It is easy to see that $N(f)\le\deg f-2$ when $f$ is a polynomial. A related problem is whether $N(f)$ is bounded by a constant depending only upon $\deg f$ for rational maps. In this work, we will construct examples to show that this is not true.

\begin{theorem}\label{main}
Given any integers $d\ge 3$ and $n\ge 1$. There exists a hyperbolic rational map $g$ such that $\deg g=d$ and $N(g)=n$.
\end{theorem}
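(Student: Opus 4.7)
The plan is to build $g$ by quasi-conformal surgery, grafting $n$ copies of a ``complex-Julia-component'' building block into a degree-$d$ hyperbolic carrier rational map. The building block is a polynomial-like mapping of degree $\ge 2$ whose filled Julia set has connected boundary that is \emph{not} a Jordan curve---for example, a polynomial-like restriction of the Basilica $z\mapsto z^2-1$, or any hyperbolic polynomial with disconnected Fatou set and connected but non-circular Julia set. Such a mapping, when grafted into a rational map as an invariant polynomial-like pair, contributes a complex (hence non-simple) periodic Julia component: by the rigidity recalled in the introduction, the cycle represented by this component is quasi-conformally conjugate to the building block's own Julia dynamics.

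For the carrier, I construct a hyperbolic rational map $f_0$ of degree $d$ possessing $n$ distinct super-attracting periodic cycles with disjoint immediate basins, each having local degree matching the degree of the building block. For every $d\ge 3$ such $f_0$ exist in abundance: e.g.\ small perturbations of rational maps of the form $z^d+c$ with multiple free critical points driven onto distinct super-attracting cycles, or explicit Blaschke-type constructions. Near each of the $n$ distinguished super-attracting points I then perform a Shishikura-style surgery: excise a small $f_0^p$-invariant topological disk around the super-attracting point, insert a scaled quasi-conformal copy of the polynomial-like building block with matching local degree, and interpolate quasi-conformally in an annular collar. The result is a quasi-regular self-map $\wt f$ of $\cbar$ of the same topological degree $d$ as $f_0$.

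Next I equip $\wt f$ with an $\wt f$-invariant Beltrami differential, defined as the pull-back of the standard complex structure outside the surgery sites and as the building block's natural invariant differential inside them; the fact that each critical orbit either stays in a basin of $f_0$ or a basin of a grafted block keeps the dilatation uniformly bounded. Shishikura's quasi-conformal surgery principle, together with the Measurable Riemann Mapping Theorem, then produces a true rational map $g$ quasi-conformally conjugate to $\wt f$, and hence of degree $d$. Hyperbolicity is automatic since every critical point of $g$ lies in a Fatou component of an attracting cycle. Finally I verify that each of the $n$ grafts yields exactly one cycle of complex periodic Julia components, while the undisturbed part of the Julia set contributes only simple (point or Jordan-curve) components, giving $N(g)=n$.

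The main obstacle is degree bookkeeping. A naive insertion of a polynomial-like block of degree $k$ raises the total degree by $k-1$, so one must match the local degree at the excised super-attracting point of $f_0$ with the polynomial-like degree of the building block, and this must be arranged simultaneously at all $n$ surgery sites. A secondary subtlety is ensuring that no \emph{extra} complex periodic cycle is created by the surgery elsewhere in $\JJJ_g$---in particular that preimages of the new polynomial-like Julia sets do not accidentally merge with other parts of the Julia set to produce unintended complex components. Both issues are handled by choosing the carrier $f_0$ to have clean hyperbolic combinatorics with wide choice of super-attracting points of prescribed local degree, and by keeping the $n$ surgery sites well separated along with their entire backward orbits.
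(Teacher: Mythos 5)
Your plan is appealingly direct, but it cannot deliver the theorem as stated, for a rigid reason that has nothing to do with surgery technicalities: a rational map of degree $d$ has exactly $2d-2$ critical points (with multiplicity), and each super-attracting cycle must contain a critical point, so distinct super-attracting cycles carry distinct critical points. Your carrier $f_0$ is required to have $n$ disjoint super-attracting cycles, which forces $n\le 2d-2$. The theorem, however, asserts that for \emph{every} $n\ge 1$ and fixed $d\ge 3$ there is such a $g$; for $d=3$ your scheme caps out at $n\le 4$. The essential idea your proposal misses is that an additional cycle of complex Julia components does \emph{not} cost a fresh critical orbit: it can be created purely combinatorially by re-routing the Shishikura tree map. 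That is exactly what the paper's self-grafting does (Theorem 7.2): attach a copy of part of the tree at a repelling periodic cycle coming from a buried periodic Jordan-curve component, producing a new sub-hyperbolic semi-rational map of the \emph{same} degree (degree $3$) with one more cycle of complex components and still infinitely many buried Jordan-curve cycles, so the step can be iterated indefinitely. The degree is then raised from $3$ to $d$ at the very end by a separate disc-annulus surgery that does not change $N$.

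Two further gaps are worth flagging even in the range $n\le 2d-2$. First, you justify straightening $\wt f$ by Shishikura's qc-surgery principle, but that principle requires that the forward orbit of \emph{every} point pass through the non-holomorphic region a bounded number of times; your annular collars sit inside the super-attracting basins, and a point escaping a grafted block re-enters the excised disk (hence the collar) repeatedly, so the hypothesis fails. The paper explicitly points this out for its own branched covering $F$ at the end of $\S 8$ and instead verifies absence of Thurston obstructions through the canonical-decomposition criterion of Theorem 3.2; your construction would need an analogous Thurston-theoretic argument (or a genuinely different escape routing), not a one-line appeal to Shishikura. Second, your degree-matched insertion (a polynomial-like block of degree $k$ replacing a local degree-$k$ super-attracting germ) is modeled on tuning, which typically yields a \emph{connected} Julia set; to make the block's Julia set a separate Julia component you must manufacture a definite escape annulus in the Fatou set separating the block from the rest of $\JJJ_g$, and in a degree-preserving surgery this is not automatic and is left entirely unaddressed.
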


The main tools used in the proof of Theorem \ref{main} are canonical decompositions and Shishikura tree maps. The idea of canonical decompositions firstly appeared in \cite{CT}. The Shishikura tree maps were proposed by Shishikura in \cite{Shi2,Shi3}. In the following, we will explain how to apply them in this work.

In the early 1980s, Thurston established a complete topological characterization of postcritically finite rational maps by means of Thurston obstructions (\cite{DH1, Th}). A Thurston obstruction is a condition about the growth on weighted pullbacks of a collection of Jordan curves. Thurston's characterization theorem provides a criterion to find a rational realization of the given combinatorics. More precisely, in order to build a rational map with desired combinatorial properties, one may construct a topological branched covering, and then check whether this specific branched covering has Thurston obstructions or not. If not, then Thurston's theorem guarantees the existence of a rational map with the same combinatorial properties.

Thurston's theorem is a useful tool in holomorphic dynamics to construct various kind of rational maps with prescribed dynamical properties and there are many applications of Thurston's theory.  However, Thurston's theorem can only be applied to postcritically finite branched coverings,  and to verify the absence of Thurston onstructions, one usually need to check infinitely many collections of Jordan curves. Thus in general, it is a bit difficult to apply Thurston's theorem effectively.

Over the years, many methods have been developed to overcome these two drawbacks. We mention here a result (refer to \cite{CT} or $\S2, \mathrm{Theorem\ B}$ in this article) which will be used as a main tool in the present work. Thurston's theorem was extended to the set of non-post-critically finite rational maps or sub-hyperbolic rational maps in the paper \cite{CT}. In their work, instead of modifying the original proof of Thurston's theorem to fit their situation, they decomposed the dynamics of a sub-hyperbolic semi-rational map into post-critically finite parts and the gluing parts. By using the method of decompositions, they provided effective criteria for the absence of obstructions. However, in their paper, they did not state the theorems in the form of decompositions.

To prove Theorem \ref{main} in this article, we will develop the idea of decompositions (which will be called \textbf{canonical decompositions}) and present a precise statement about the canonical decompositions to adopt to our situation, see $\S3, \mathrm{Theorem\ \ref{decomposition}}$. Then we will give a criterion for the absence of Thurston obstructions in terms of canonical decompositions, see $\S3, \mathrm{Theorem\ \ref{obstruction}}$.  A \textbf{canonical multicurve} derives from a canonical decomposition. We will present the relationship between the complementary components of a canonical multicurve and the complex Julia components, refer to Lemma \ref{Y2C}.

Douady and Hubbard (\cite{DH2}) introduced Hubbard's trees to describe the dynamics of post-critically finite polynomials. Since then, there have been several attempts to use the tree structures to encode the dynamics of Julia sets, and now the idea of using trees are pervasive in the holomorphic dynamics. For example, for a rational map with Herman rings, a certain kind of tree and piecewise linear map on it were proposed by Shishikura (\cite{Shi2,Shi3}) to reflect the configuration of the Herman rings; DeMarco and McMullen (\cite{DM}) characterized the branched coverings of metrized, simplicial trees arising from polynomial maps with disconnected Julia sets, and gave a compactification of the moduli space of polynomial maps of degree at least 2; A family of cubic rational maps with buried Julia components coming from a particular tree was constructed by Godillon (\cite{G}).

To enumerate the cycles of complex Julia components, we will introduce a tree map associated with a canonical multicurve to characterize the dynamics on the configuration of Julia components (see $\S5$ and $\S6$). This idea is actually inspired by the tree maps proposed by Shishikura (\cite{Shi2, Shi3}).

For the purpose to construct rational maps with given number of cycles of complex Julia components, we establish a procedure on a tree map called \textbf{self-grafting} to create a new tree map, such that the corresponding new rational map has a new cycle of complex Julia components (refer to $\S7$ for the procedure). More precisely, starting from the original rational map and the Shishikura tree map associated with it, we will construct a rational map to realize the new Shishikura tree map with the same degree as the given rational map, and having one more cycle of complex Julia components, compared to the original rational map.  In order to realize the new Shishikura tree, we will first build a sub-hyperbolic semi-rational map and then apply Theorem \ref{obstruction} to verify the absence of Thurston obstructions for the specific branched covering (see $\S7, \mathrm{Theorem}\ \ref{self-grafting}$). This step is crucial to prove Theorem \ref{main}.

We would like to mention here an application of the idea in this work.  In \cite{AC}, based on the approaches of the canonical decompositions and self-graftings, a sequence of rational maps with an arbitrary large number of dynamically independent and non-monomial rescaling limits were constructed. From this result, they showed the existence of a family of rational maps with a non-trivial dynamics on the Berkovich projective line over the field of formal Puiseux series. Their work demonstrates the connections of the present work with arithmetic dynamics.

\section{Sub-hyperbolic version of Thurston's theorem}

In this section, we will first recall the sub-hyperbolic version of Thurston's theorem. Then we will present some preliminary lemmas about multicurves.

Let $F$ be a branched covering of $\cbar$ with $\deg F\ge 2$. Denote by $\Omega_F$ the set of branched points of $F$ and by
$$
\PPP_F=\overline{\bigcup_{n>0}F^n(\Omega_F)}
$$
{\bf the post-critical set} of $F$. The map $F$ is {\bf geometrically finite} if the accumulation point set $\PPP'_F$ of $\PPP_F$ is finite.

A geometrically finite branched covering $F$ is a {\bf (sub-hyperbolic) semi-rational map} if $F$ is holomorphic in a neighborhood of $\PPP'_F$ and each cycle in $\PPP'_F$ is either attracting or super-attracting.

%\vskip 0.24cm
Two semi-rational maps $F$ and $G$ are {\bf c-equivalent} if there exist a pair of orientation preserving homeomorphisms $(\phi,\psi)$ of $\cbar$ and an open set $U\supset\PPP'_F$ such that $G\circ\psi=\phi\circ F$, $\phi$ is holomorphic in $U$, $\psi=\phi$ in $U$ and $\psi$ is isotopic to $\phi$ rel $\PPP_F\cup U$.

%\vskip 0.24cm
Let $F$ be a semi-rational map. A Jordan curve $\g$ in $\cbar\smm\PPP_F$ is {\bf trivial} if one component of $\cbar\smm\g$ is disjoint from $\PPP_F$; or is {\bf peripheral} if one component of $\cbar\smm\g$ contains exactly one point of $\PPP_F$; or is {\bf essential} otherwise, i.e. if each component of $\cbar\smm\g$ contains at least two points of $\PPP_F$.

{\bf Convention}. For the simplicity of the writing, we say that two essential curves are isotopic if they are isotopic rel the post-critical set.

A {\bf multicurve}\ $\Gamma$ is a non-empty and finite collection of disjoint Jordan curves in $\cbar\smm\PPP_F$, each essential and no two isotopic. It is {\bf stable}\ if each essential curve in $F^{-1}(\g)$ for $\g\in\G$ is isotopic to a curve in $\G$; or {\bf pre-stable}\ if each curve $\g\in\G$ is isotopic to a curve in $F^{-1}(\be)$ for some $\be\in\G$. A multicurve is {\bf completely stable} if it is stable and pre-stable.

The transition matrix $M(\Gamma)=(a_{\beta\gamma})$ of a multicurve $\G$ is defined by the formula
$$
a_{\beta\gamma}=\sum_{\delta}\frac 1{\deg(F:\,\delta\to\gamma)}
$$
where the sum is taken over all components $\delta$ of $F^{-1}(\gamma)$ which are isotopic to $\beta$. Let $\lambda(\Gamma)=\lambda(M(\Gamma))$ denote the spectral radius of $M(\Gamma)$. A stable multicurve $\Gamma$ is called a {\bf Thurston obstruction} of $F$ if $\lambda(\Gamma)\ge 1$. Refer to \cite[Theorem 1.1]{CT} or \cite{JZ} for the next Theorem.

\begin{theoremB}
Let $F$ be a semi-rational map with $\PPP'_F\neq\emptyset$. Then $F$ is c-equivalent to a rational map $f$ if and only if it has no Thurston obstruction.
Moreover, the rational map $f$ is unique up to holomorphic conjugation.
\end{theoremB}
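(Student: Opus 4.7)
The plan is to extend Thurston's Teichm\"uller-theoretic iteration to the geometrically finite setting, splitting the argument into necessity, sufficiency, and uniqueness.

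\emph{Necessity.} Suppose $F$ is c-equivalent to a rational map $f$ via $(\phi,\psi)$. Given any stable multicurve $\G$ for $F$, the multicurve $\phi(\G)$ is stable for $f$ with the same transition matrix: since $\phi$ conjugates $F$ to $f$ modulo the isotopy of $\psi$ to $\phi$ rel $\PPP_F$, preimage components correspond bijectively and their degrees are preserved. Hence it suffices to show that a geometrically finite rational map admits no stable multicurve with spectral radius $\ge 1$. This is the classical non-existence statement, proved by a Gr\"otzsch/annulus-modulus argument: moduli of essential annuli in $\cbar\smm\PPP_f$ grow strictly under pullback because $f$ is uniformly expanding (in the hyperbolic or orbifold metric) on a neighborhood of $\JJJ_f\cap\overline{\PPP_f}$, and this strict expansion forces $\lambda(\G)<1$.

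\emph{Sufficiency.} The strategy is to reduce to the postcritically finite case by a local surgery near $\PPP'_F$, apply the classical Thurston theorem, and then graft the original dynamics back. Since $F$ is holomorphic near $\PPP'_F$ and every cycle there is (super-)attracting, I would choose disjoint topological disks $U_1,\dots,U_k$ whose union is forward invariant under some iterate $F^N$ and on which $F^N$ is quasi-conformally conjugate to an explicit Blaschke or power-map model. Replace $F$ inside each $U_j$ by a postcritically finite branched model agreeing with $F$ on $\partial U_j$ and having the same topological degree; the result is a postcritically finite branched covering $\wt F$ whose post-critical set is contained in $(\cbar\smm\bigcup U_j)\cup\{\text{attracting cycles}\}$. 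Any stable multicurve of $\wt F$ can be isotoped off the disks and so descends to a stable multicurve of $F$ with the same transition matrix; consequently the no-obstruction hypothesis for $F$ transfers to $\wt F$. Classical Thurston's theorem then produces a rational map $\wt f$ c-equivalent to $\wt F$. Finally, re-graft the original germs of $F$ inside the corresponding disks in the $\wt f$-picture via a quasi-conformal interpolation, and straighten the resulting almost complex structure by the measurable Riemann mapping theorem to obtain a rational map $f$ c-equivalent to $F$. The hardest point is controlling this grafting so that the interpolating Beltrami coefficient has uniformly bounded dilatation and so that the final map is c-equivalent (not merely topologically equivalent) to $F$; this requires carefully matching boundary germs and invoking holomorphic linearization near the attracting cycles.

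\emph{Uniqueness.} Suppose $f_1$ and $f_2$ are two rational realizations. Compose the two c-equivalences to obtain an orientation preserving homeomorphism $h_0:\cbar\to\cbar$ conjugating $f_1$ to $f_2$ up to isotopy rel $\PPP_{f_1}$ and holomorphic on a neighborhood of $\PPP'_{f_1}$. Iteratively lift $h_0$ through the dynamics to produce a sequence $h_n$; the contraction of Thurston's pullback map on the Teichm\"uller space of marked structures that are trivial near $\PPP'_{f_1}$ (a consequence of the absence of a Thurston obstruction) yields convergence to a quasi-conformal conjugacy $h_\infty$ between $f_1$ and $f_2$. Since a geometrically finite rational map carries no measurable invariant line field on its Julia set, $h_\infty$ must be conformal, hence a M\"obius transformation.
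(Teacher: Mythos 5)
The paper does not prove Theorem B at all: it states the result and cites Cui--Tan \cite[Theorem 1.1]{CT} and Jiang--Zhang \cite{JZ} for its proof, then builds its own machinery (canonical decompositions, Theorem \ref{obstruction}) as \emph{criteria} for applying Theorem B rather than as a proof of it. So there is no internal argument to compare your sketch against. According to the introduction of the present paper, the proof in \cite{CT} proceeds by decomposing the semi-rational map into postcritically finite pieces and gluing pieces, which is broadly the spirit of your sufficiency step, so your outline is at least pointed in a direction consistent with the cited proof.

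That said, your sufficiency step has a concrete gap. The claim that a stable multicurve of $\wt F$ ``descends to a stable multicurve of $F$ with the same transition matrix'' is not automatic, because $\PPP_{\wt F}$ and $\PPP_F$ determine different isotopy categories: $\PPP_F$ is infinite, a preimage component that is essential rel $\PPP_F$ may be peripheral rel $\PPP_{\wt F}$ (if all the extra $\PPP_F$-points on one side sit inside a single surgery disk whose model has a smaller post-critical set), and two components isotopic rel $\PPP_{\wt F}$ need not stay isotopic rel $\PPP_F$. So neither stability nor the matrix entries transfer without a careful accounting of how many marked points end up on each side of the curves before and after the model replacement; this is precisely the kind of bookkeeping the decomposition argument in \cite{CT} is designed to make precise. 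In the uniqueness step, the ``no invariant line field'' rigidity handles only the Julia set; you additionally need to propagate conformality of the limit conjugacy from the Koenigs/B\"ottcher disks through the entire Fatou set via the functional equation, and to justify that the iterated lifts converge in the relevant (infinite post-critical set) Teichm\"uller framework. These are not fatal objections, but they are exactly the places where a complete proof must do real work, and your sketch currently passes over them.
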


The following properties about multicurves will be used in this paper.

A multicurve $\G$ is called {\bf irreducible} if for each pair $(\g,\be)\in\G\times\G$, there exists a sequence $\{\g=\delta_0, \cdots, \delta_n=\beta\}$ of curves in $\G$ such that $F^{-1}(\delta_k)$ has a component isotopic to $\delta_{k-1}$ for $1\le k\le n$. Refer to \cite[Theorem B.6]{Mc2} for the next lemma.

\begin{lemma}\label{irreducible}
Let $F$ be a semi-rational map. For any multicurve $\G$ with $\lambda(\G)>0$, there is an irreducible multicurve $\G_0\subset\G$ such that $\lambda(\G_0)=\lambda(\G)$.
\end{lemma}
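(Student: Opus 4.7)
The plan is to apply the Frobenius normal form of nonnegative matrices to $M(\G)$. First I would associate to $\G$ a directed graph $\GGG$ with vertex set $\G$, placing an arrow $\be \to \g$ whenever $a_{\be\g} > 0$, i.e.\ whenever $F^{-1}(\g)$ has a component isotopic to $\be$. Unwinding the definition of irreducibility in the statement, the condition on $\G$ becomes exactly that $\GGG$ is strongly connected.

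Partitioning $\G$ into the strongly connected components $\G_1, \dots, \G_r$ of $\GGG$ and ordering them topologically puts $M(\G)$ into block upper triangular form, in which the $i$-th diagonal block coincides with the intrinsic transition matrix $M(\G_i)$. Indeed, the formula defining $a_{\be\g}$ depends only on preimage components of $\g$ that are isotopic to $\be$, a condition intrinsic to the curves themselves; so restricting the ambient multicurve from $\G$ to $\G_i$ leaves each matrix entry unchanged. (The subfamily $\G_i$ inherits from $\G$ the conditions of being disjoint, essential and pairwise non-isotopic, hence is itself a multicurve.)

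By Perron--Frobenius theory, each diagonal block $M(\G_i)$ is either the $1\times 1$ zero matrix -- if $\G_i$ is a singleton with no self-loop -- or an irreducible nonnegative matrix, and the spectral radius of a block triangular matrix equals the maximum of the spectral radii of its diagonal blocks. Hence $\la(\G) = \max_i \la(\G_i)$. Setting $\G_0 := \G_{i_*}$ for any index $i_*$ realizing this maximum then yields the sub-multicurve required by the lemma: the hypothesis $\la(\G)>0$ rules out the zero blocks, so $\G_0$ is genuinely strongly connected in $\GGG$, and the paths in $\GGG$ restricted to $\G_0$ provide precisely the sequences $\{\g = \delta_0, \dots, \delta_n = \be\}$ demanded by the definition of irreducibility.

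The one point where I expect to spend care is the identification of the diagonal block of $M(\G)$ indexed by $\G_i$ with the intrinsic transition matrix $M(\G_i)$, that is, checking that restricting the index set does not secretly alter entries via some isotopy with a curve outside $\G_i$; this follows because no two distinct curves of $\G$ are isotopic, so each essential preimage component is isotopic to at most one $\be \in \G$. Once that observation is in hand, the lemma is a direct application of the classical structure theory for nonnegative matrices.
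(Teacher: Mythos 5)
The paper does not prove Lemma~\ref{irreducible} itself; it cites \cite[Theorem B.6]{Mc2}. Your argument is correct and is the standard one underlying that reference: form the directed graph on $\G$ with an arrow $\be\to\g$ exactly when $a_{\be\g}>0$, note that irreducibility of a sub-multicurve is precisely strong connectivity of the induced subgraph, pass to the condensation (a DAG) to put $M(\G)$ in block-triangular form with diagonal blocks $M(\G_i)$, and take the strongly connected component realizing the maximal spectral radius. Your two points of care are exactly the right ones and both check out: the entry $a_{\be\g}$ depends only on $\be$, $\g$, and $F$ (and no two curves of $\G$ are isotopic, so each essential preimage component contributes to at most one entry), so the diagonal block really is the intrinsic $M(\G_i)$; and the hypothesis $\la(\G)>0$ rules out the degenerate $1\times1$ zero blocks. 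One small remark you leave implicit but which is standard: any directed path joining two vertices of the same strongly connected component lies entirely within that component, so the required sequences $\{\g=\de_0,\dots,\de_n=\be\}$ can indeed be taken inside $\G_0$.
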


Refer to \cite[Corollary A.2]{CT} for the next lemma.

\begin{lemma}\label{eigenvalue}
For any non-negative square matrix $M$, its leading eigenvalue satisfies
$$
\lambda(M)=\inf\{\lambda:\, \exists\, v>0\text{ such that }Mv<\lambda v\}.
$$
\end{lemma}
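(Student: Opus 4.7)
The plan is to prove the two inequalities $\lambda(M)\le\inf S$ and $\inf S\le\lambda(M)$, where $S:=\{\lambda:\,\exists\, v>0\text{ with }Mv<\lambda v\}$ denotes the set appearing in the infimum.

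First I would establish $\lambda(M)\le\inf S$ by showing that every $\lambda\in S$ strictly exceeds $\lambda(M)$. Applying the classical Perron--Frobenius theorem to $M^T$ yields a non-negative, non-zero eigenvector $u$ with $M^T u=\lambda(M)\,u$. Given $v>0$ and $Mv<\lambda v$, pairing with $u$ gives
$$
\lambda(M)\,u^T v \;=\; u^T M v \;<\; \lambda\,u^T v.
$$
Since $u\ge 0$, $u\neq 0$ and $v$ has strictly positive components, $u^T v>0$, forcing $\lambda(M)<\lambda$. Taking the infimum over $\lambda\in S$ concludes this half.

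For the reverse inequality, I would use a perturbation argument. Fix $\ep>0$ and set $M_{\ep}:=M+\ep J$, where $J$ is the matrix with all entries equal to $1$. Since $M_{\ep}$ has strictly positive entries, Perron--Frobenius supplies a strictly positive eigenvector $v_{\ep}>0$ with $M_{\ep}v_{\ep}=\lambda(M_{\ep})\,v_{\ep}$. Then
$$
Mv_{\ep} \;=\; \lambda(M_{\ep})\,v_{\ep}-\ep\,Jv_{\ep} \;<\; \lambda(M_{\ep})\,v_{\ep},
$$
because all components of $Jv_{\ep}$ are strictly positive. Hence $\lambda(M_{\ep})\in S$, so $\inf S\le\lambda(M_{\ep})$. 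Letting $\ep\to 0^{+}$ and invoking the continuity of the spectral radius with respect to matrix entries gives $\inf S\le\lambda(M)$.

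The only place demanding any real care is the double invocation of Perron--Frobenius theory: once to obtain a non-negative left eigenvector of $M$ (which need not be irreducible, so one cannot hope for strict positivity here), and once to obtain a strictly positive eigenvector of the strictly positive matrix $M_{\ep}$, which is what makes the strict inequality $Mv_{\ep}<\lambda(M_{\ep})v_{\ep}$ available. Both facts are classical, and combining the two inequalities yields the stated equality.
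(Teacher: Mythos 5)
Your proof is correct. Note that the paper does not prove this lemma at all — it simply cites \cite[Corollary A.2]{CT} — so there is no internal argument to compare against; your write-up is a self-contained proof of the cited fact. Both halves are sound: the first direction is the standard Collatz--Wielandt pairing against a non-negative left Perron eigenvector of $M$ (correctly handling the possible reducibility of $M$, where only $u\ge 0$, $u\neq 0$ is guaranteed, which still suffices because $v>0$ gives $u^Tv>0$ and the strict entrywise inequality $Mv<\lambda v$ survives the pairing); the second direction via the positive perturbation $M_\ep=M+\ep J$ and continuity of the spectral radius correctly produces, for each $\ep>0$, a witness $v_\ep>0$ showing $\lambda(M_\ep)\in S$. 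The argument is complete and at the right level of generality.
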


\begin{lemma}\label{submulticurve}
Let $\G_1\subset\G$ be multicurves. Then $\lambda(\G_1)\le\lambda(\G)$.
\end{lemma}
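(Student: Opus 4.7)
The plan is to observe that $M(\G_1)$ is literally the principal submatrix of $M(\G)$ indexed by the curves of $\G_1$, and then to deduce the inequality from Lemma \ref{eigenvalue}.

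First I would check the identification of matrices. For $\be,\g\in\G_1$, the entry $a_{\be\g}$ in $M(\G_1)$ is defined as a sum over components $\de$ of $F^{-1}(\g)$ isotopic (rel $\PPP_F$) to $\be$. Since the ambient postcritical set $\PPP_F$ depends only on $F$, this count is the same whether one views $\be$ as a curve in $\G_1$ or in $\G$. Hence $M(\G_1)$ coincides with the submatrix of $M(\G)$ obtained by keeping rows and columns indexed by $\G_1$, and this submatrix is non-negative.

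Next I would apply Lemma \ref{eigenvalue} to $M(\G)$: for every $\ep>0$ there exists a strictly positive vector $v$ indexed by $\G$ with $M(\G)v<(\la(\G)+\ep)v$. Let $v_1$ denote the restriction of $v$ to the coordinates indexed by $\G_1$; it is again strictly positive. For each $\be\in\G_1$, non-negativity of the entries of $M(\G)$ gives
$$
(M(\G_1)v_1)_\be=\sum_{\g\in\G_1}a_{\be\g}v_\g\le\sum_{\g\in\G}a_{\be\g}v_\g=(M(\G)v)_\be<(\la(\G)+\ep)(v_1)_\be,
$$
so $M(\G_1)v_1<(\la(\G)+\ep)v_1$. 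Applying Lemma \ref{eigenvalue} in the reverse direction to $M(\G_1)$ yields $\la(\G_1)\le\la(\G)+\ep$, and letting $\ep\to 0$ gives the claim.

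There is no real obstacle here: the only point requiring a sentence of care is the first one, namely that the definition of the transition matrix is intrinsic enough that $M(\G_1)$ really sits as a principal submatrix of $M(\G)$ (no extra contributions appear from curves in $\G\smm\G_1$ when the indices are restricted to $\G_1$). Once that is stated, the monotonicity of the spectral radius under this kind of restriction is immediate from the variational characterization provided by Lemma \ref{eigenvalue}.
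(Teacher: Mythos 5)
Your proof is correct, and the core tool is the same as in the paper, namely the variational characterization of the leading eigenvalue from Lemma \ref{eigenvalue}. The difference is a modest but genuine streamlining. The paper first passes to an auxiliary matrix $M$ obtained from $M(\G)$ by zeroing out the blocks that involve $\G\smm\G_1$, invokes an external monotonicity result (\cite[Corollary A.3]{CT}) to get $\la(\G)\ge\la(M)$, and only then applies Lemma \ref{eigenvalue} to $M$ and reads off the block structure to conclude $\la(\G_1)\le\la(M)$. You instead apply Lemma \ref{eigenvalue} directly to $M(\G)$, restrict the positive test vector $v$ to the coordinates indexed by $\G_1$, and use non-negativity of the entries to absorb the discarded terms. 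This avoids both the auxiliary matrix and the dependence on \cite[Corollary A.3]{CT}, so your argument is more self-contained; the paper's route makes the block structure slightly more explicit but at the cost of an extra citation. Your initial sentence checking that $M(\G_1)$ really is the principal submatrix of $M(\G)$ is exactly the point worth pausing on, and you handle it correctly.
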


\begin{proof} The transition matrices of $\G$ and $\G_1$ satisfy the following inequality:
$$
M(\G)=\left(\begin{array}{cc}
M(\G_1) & * \\
* & *
\end{array}\right)\ge M=
\left(\begin{array}{cc}
M(\G_1) & O_1 \\
O_2 & O_3
\end{array}\right),
$$
where $O_i$ are zero matrices. Thus $\lambda(\G)\ge\lambda(M)$ by \cite[Corollary A.3]{CT}.

By Lemma \ref{eigenvalue}, for any $\lambda>\lambda(M)$, there exists a vector $v=(v_1, v_2)>0$ such that $Mv<\lambda v$. Thus
$$
Mv=\left(\begin{array}{cc}
M(\G_1) & O_1 \\
O_2 & O_3
\end{array}\right)
\left(\begin{array}{c}
v_1 \\
v_2
\end{array}\right)=
\left(\begin{array}{c}
M(\G_1)v_1 \\
O
\end{array}\right)<
\left(\begin{array}{c}
\lambda v_1 \\
\lambda v_2
\end{array}\right),
$$
where $O$ is a zero matrix. So $M(\G_1)v_1<\lambda v_1$. Therefore $\lambda(\G_1)<\lambda$ by Lemma \ref{eigenvalue}. Since $\lambda$ is an arbitrary number with $\lambda>\lambda(M)$, we have $\lambda(\G_1)\le\lambda(M)$. Now the lemma follows.
\end{proof}

\begin{lemma}\label{block}
Let $\G=\G_1\sqcup\G_2$ be a multicurve of a semi-rational map $F$ such that for each curve $\g\in\G_2$, $F^{-1}(\g)$ has no component isotopic to a curve in $\G_1$. Then
$$
\lambda(\G)=\max\{\lambda(\G_1), \lambda(\G_2)\}.
$$
\end{lemma}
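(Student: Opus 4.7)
The plan is to observe that the hypothesis is exactly the condition that makes $M(\Gamma)$ block triangular, and then read off the spectral radius from the diagonal blocks by combining Lemma \ref{submulticurve} with Lemma \ref{eigenvalue}.

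First I would order the curves in $\Gamma$ so that those in $\Gamma_1$ come first. For $\beta\in\Gamma_1$ and $\gamma\in\Gamma_2$, the entry $a_{\beta\gamma}$ is a sum over components of $F^{-1}(\gamma)$ isotopic to $\beta$, but by hypothesis there are none, so $a_{\beta\gamma}=0$. Hence
$$
M(\Gamma)=\begin{pmatrix} M(\Gamma_1) & 0 \\ B & M(\Gamma_2) \end{pmatrix}
$$
for some non-negative block $B$, and the diagonal blocks are exactly $M(\Gamma_1)$ and $M(\Gamma_2)$.

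One direction of the equality is immediate: by Lemma \ref{submulticurve}, $\lambda(\Gamma_i)\le\lambda(\Gamma)$ for $i=1,2$, hence $\max\{\lambda(\Gamma_1),\lambda(\Gamma_2)\}\le\lambda(\Gamma)$. For the reverse, fix any $\lambda>\max\{\lambda(\Gamma_1),\lambda(\Gamma_2)\}$. Lemma \ref{eigenvalue} supplies vectors $v_1,v_2>0$ with $M(\Gamma_i)v_i<\lambda v_i$. I would then form the ansatz $v=(\ep v_1,v_2)$ with $\ep>0$ to be chosen and compute
$$
M(\Gamma)v=\begin{pmatrix} \ep\, M(\Gamma_1)v_1 \\ \ep\, Bv_1+M(\Gamma_2)v_2 \end{pmatrix}.
$$
The top block satisfies $\ep M(\Gamma_1)v_1<\lambda\ep v_1$ for any $\ep>0$; the strict inequality $M(\Gamma_2)v_2<\lambda v_2$ lets me choose $\ep>0$ small enough that $\ep Bv_1+M(\Gamma_2)v_2<\lambda v_2$ also holds. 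Thus $M(\Gamma)v<\lambda v$, so Lemma \ref{eigenvalue} gives $\lambda(\Gamma)\le\lambda$. Letting $\lambda\downarrow\max\{\lambda(\Gamma_1),\lambda(\Gamma_2)\}$ finishes the proof.

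There is no real obstacle here; the only point requiring any care is the scalar $\ep$ in the last step, chosen so that the off-diagonal coupling $\ep Bv_1$ does not destroy the strict inequality already enjoyed by $M(\Gamma_2)v_2$ against $\lambda v_2$. Everything else is a direct invocation of the two preceding lemmas.
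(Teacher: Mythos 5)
Your proof is correct and follows essentially the same route as the paper: both set up the block lower-triangular form of $M(\Gamma)$, get $\max\{\lambda(\Gamma_1),\lambda(\Gamma_2)\}\le\lambda(\Gamma)$ from Lemma~\ref{submulticurve}, and establish the reverse inequality via Lemma~\ref{eigenvalue} using the test vector $(\varepsilon v_1, v_2)$ with $\varepsilon$ chosen small enough to absorb the coupling term $\varepsilon B v_1$. The only difference is cosmetic ordering of the two inequalities.
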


\begin{proof}
The transition matrix of $\G$ has the following block decomposition
$$
M(\G)=\left(\begin{array}{cc}
M(\G_1) & O \\
B & M(\G_2)
\end{array}\right),
$$
where $O$ is a zero matrix.

For any $\lambda>\max\{\lambda(\G_1), \lambda(\G_2)\}$, by Lemma \ref{eigenvalue}, there exist vectors $v_1, v_2>0$ such that $M(\G_1)v_1<\lambda v_1$ and $M(\G_2)v_2<\lambda v_2$. Thus there exists $\varepsilon>0$ such that $M(\G_2)v_2+\varepsilon Bv_1<\lambda v_2$. Now we have
$$
M(\G)\left(\begin{array}{c}
\varepsilon v_1 \\
v_2
\end{array}\right)
=\left(\begin{array}{c}
\varepsilon M(\G_1)v_1 \\
M(\G_2)v_2+\varepsilon Bv_1
\end{array}\right)
<\left(\begin{array}{c}
\varepsilon\lambda v_1 \\
\lambda v_2
\end{array}\right).
$$
Thus $\lambda(\G)<\lambda$ by Lemma \ref{eigenvalue}. So $\lambda(\G)\le\max\{\lambda(\G_1), \lambda(\G_2)\}$ since $\lambda$ is arbitrary.

By Lemma \ref{submulticurve}, we have $\lambda(\G_1)\le\lambda(\G)$ and $\lambda(\G_2)\le\lambda(\G)$. Combining these inequalities, we obtain $\lambda(\G)=\max\{\lambda(\G_1), \lambda(\G_2)\}$.
\end{proof}

\section{Canonical decompositions and canonical multicurves}
In this section, we will introduce the canonical decompositions in \cite{CT} and give a criterion for the absence of Thurston obstructions in terms of decompositions.

By quasi-conformal surgery, for any sub-hyperbolic rational map $g$, there exist another sub-hyperbolic rational map $f$ and a quasi-conformal map $\phi$ of $\cbar$ such that $f\circ\phi=\phi\circ g$ holds in a neighborhood of $\JJJ_g$, and each super-attracting cycle of $f$ is contained in $\PPP'_f$. This can be done. In fact, one could perturb all the super-attracting cycles into attracting cycles.

The main object of this work is Julia sets. So we may assume that sub-hyperbolic rational maps $f$ in our consideration satisfy the condition that each super-attracting cycle of $f$ is contained in $\PPP'_f$. This technical assumption will simplify the statements and proofs in this work.

%\vskip 0.24cm
Let $F$ be semi-rational map. It is called {\bf generic} if $\PPP'_F\neq\emptyset$ and each super-attracting cycle of $F$ is contained in $\PPP'_F$.

A set $E\subset\cbar$ is {\bf D-type} if there exists a simply-connected domain $D\subset\cbar$ such that $E\subset D$ and $D$ contains at most one point of $\PPP_F$; or is {\bf A-type} if it is not D-type and there exists an annulus $A\subset\cbar$ such that $E\subset A$ and $A$ is disjoint from $\PPP_F$; or is {\bf Q-type} otherwise. We remark that the definitions of D-type, A-type and Q-type sets depend on the perturbation of a semi-rational map. For instance, for a sub-hyperbolic rational map, a super-attracting Fatou domain containing exactly one $\PPP_f$ point is D-type, whereas it is Q-type under our assumption that each super-attracting cycle of $f$ is contained in $\PPP'_f$. We make the assumption to simplify the discussion.

From the condition $F(\PPP_F)\subset\PPP_F$, we know that $F(E)$ is Q-type if $E$ is Q-type, and $F(E)$ is A-type or Q-type if $E$ is A-type.

A generic semi-rational map $F$ is called {\bf degenerate} if for any open set $U\supset\PPP'_F$, there exists $N>0$ such that for $n>N$, each component of $\cbar\smm F^{-n}(U)$ is D-type.

\begin{proposition}\label{degenerate}
A degenerate and generic semi-rational map is always c-equivalent to a rational map.
\end{proposition}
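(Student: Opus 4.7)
The plan is to apply Theorem B: since $F$ is generic we have $\PPP'_F\ne\emptyset$, so it suffices to show that a degenerate generic semi-rational map admits no Thurston obstruction. I would argue by contradiction and suppose that $\Gamma=\{\gamma_1,\ldots,\gamma_k\}$ is a Thurston obstruction, so $\lambda(\Gamma)\ge 1$.

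The compact set $\bigcup_i\gamma_i$ is disjoint from $\PPP_F$, hence from $\PPP'_F$, so I can pick an open neighborhood $U$ of $\PPP'_F$ with $U\cap\bigcup_i\gamma_i=\emptyset$. By the degeneracy hypothesis there is $N$ such that every component of $\cbar\smm F^{-n}(U)$ is D-type for all $n>N$. The central observation is then that for $n>N$ no component of $F^{-n}(\gamma_i)$ is essential. Given such a component $\delta$, the identity $F^{-n}(\gamma_i)\cap F^{-n}(U)=F^{-n}(\gamma_i\cap U)=\emptyset$ shows that $\delta\subset\cbar\smm F^{-n}(U)$, and connectedness places $\delta$ inside a single D-type component, i.e.\ inside a simply-connected domain $D$ containing at most one point of $\PPP_F$. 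One of the two Jordan disks bounded by $\delta$ in $\cbar$ then lies inside $D$ and contains at most one $\PPP_F$-point, so $\delta$ is either trivial or peripheral; in particular $\delta$ is not isotopic to any $\beta\in\Gamma$.

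Let $\wt M^{(n)}$ denote the transition matrix of $\Gamma$ associated with $F^n$. The previous step gives $\wt M^{(n)}=0$ for all $n>N$. A standard isotopy-lifting argument pairs each chain $\gamma\leftarrow\epsilon_1\leftarrow\cdots\leftarrow\epsilon_n$ contributing to $M(\Gamma)^n$ with a distinct component of $F^{-n}(\gamma)$ isotopic to the relevant $\beta\in\Gamma$ and of equal $F^n$-degree, yielding the entrywise bound $M(\Gamma)^n\le\wt M^{(n)}$. Hence $M(\Gamma)^n=0$ for $n>N$, which forces $\lambda(\Gamma)=0$ and contradicts $\lambda(\Gamma)\ge 1$. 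Theorem B then produces a rational map c-equivalent to $F$. The main obstacle is the central observation of the middle paragraph: once the degeneracy is translated into triviality or peripheralness of iterated preimage components, the rest is routine spectral-radius bookkeeping.
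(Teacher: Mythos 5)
Your proof is correct and rests on the same key observation as the paper's: since the curves of the putative obstruction avoid $\PPP'_F$, degeneracy forces every component of $F^{-n}(\gamma_i)$ to lie in a D-type set for $n>N$, hence to be trivial or peripheral. Where you diverge is in how you convert this into a spectral contradiction. The paper invokes Lemma \ref{irreducible} to replace $\Gamma$ by an irreducible sub-multicurve $\Gamma_0$ of equal $\lambda$, and then observes that irreducibility would require essential pullbacks of arbitrarily high order, which has just been ruled out. You instead bound the matrix power: the isotopy-lifting pairing gives the entrywise inequality $M(\Gamma)^n\le \widetilde{M}^{(n)}$ (the transition matrix of $\Gamma$ under $F^n$), and since the latter vanishes for $n>N$, $M(\Gamma)$ is nilpotent and $\lambda(\Gamma)=0$. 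Both routes ultimately lean on the same lifting mechanism --- the paper compresses it into the phrase ``contradicts irreducibility,'' you make it explicit in the inequality $M(\Gamma)^n\le\widetilde{M}^{(n)}$. Your route is more self-contained, avoiding Lemma \ref{irreducible}; the paper's is marginally shorter once that lemma is available. One small point worth spelling out: the entrywise inequality you use, while standard, deserves a sentence of justification --- distinct chains of pullback components yield distinct components of $F^{-n}(\gamma)$ because preimages of disjoint curves are disjoint, and the bijection $F^{-1}(\delta)\to F^{-1}(\epsilon)$ induced by an isotopy $\delta\simeq\epsilon$ rel $\PPP_F$ preserves components and degrees.
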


\begin{proof}
Let $F$ be a degenerate and generic semi-rational map. Let $\G$ be a multicurve with $\lambda(\G)>0$. Then there is an irreducible multicurve $\G_0\subset\G$ such that $\lambda(\G_0)=\lambda(\G)$ by Lemma \ref{irreducible}. Pick an open set $U\supset\PPP'_F$ such that it is disjoint from all the curves in $\G_0$. Then there exists $N>0$ such that for $n>N$, each component of $\cbar\smm F^{-n}(U)$ is D-type. It follows that for each $\g\in\G_0$, any curve in $F^{-n}(\g)$ is contained in a D-type set and hence is non-essential. This contradicts the condition that $\G_0$ is irreducible. Thus $F$ has no Thurston obstruction and hence it is c-equivalent to a rational map by Theorem B.
\end{proof}

We will show in Corollary \ref{cantor set} that if $F$ is a degenerate and generic sub-hyperbolic rational map, its Julia set is a Cantor set.

By a {\bf tame} set $U\subset\cbar$, we mean that $U$ is open and has only finitely many components whose closures are pairwise disjoint, and each component is bounded by finitely many Jordan curves.

\begin{theorem}\label{decomposition}
Let $F$ be a non-degenerate and generic semi-rational map. There exist a completely stable multicurve $\G$ and a tame set $\UUU_0\subset\cbar$ with $\PPP'_F\subset\UUU_0$ and $\partial\UUU_0\cap\PPP_F=\emptyset$, such that the following conditions hold for $n\ge 0$. Denote $\UUU_n=F^{-n}(\UUU_0)$
and $\LLL_n=\cbar\smm\UUU_n$.

(a) $\UUU_0$ is compactly contained in $\UUU_1$ (denote $\UUU_0\Subset\UUU_1$).

(b) $\{F^k(\UUU_0)\}$ converges to $\PPP'_F$ as $k\to\infty$.

(c) Each essential curve on $\partial\UUU_n$ is isotopic to a curve in $\G$, and vice versa.

(d) Each Q-type component $D_{n+1}$ of $\UUU_{n+1}$ contains exactly one Q-type component $D_n$ of $\UUU_n$, and each component of $D_{n+1}\smm\overline{D_n}$ is not Q-type.

(e) Each Q-type component of $\LLL_{n}$ contains exactly one Q-type component of $\LLL_{n+1}$.
\end{theorem}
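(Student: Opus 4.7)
The plan is to construct $\UUU_0$ and $\Gamma$ together by starting with an attracting neighborhood of $\PPP'_F$, iterating preimages, and using non-degeneracy to extract a stable collection of essential boundary curves.

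First, since $F$ is generic, every cycle in $\PPP'_F$ is attracting or super-attracting with the latter contained in $\PPP'_F$. Standard local theory (linearization at attracting points, B\"ottcher coordinates at super-attracting ones) yields a tame open set $V\supset\PPP'_F$ with $\partial V\cap\PPP_F=\emptyset$ and $\overline V\subset F^{-1}(V)$. Setting $V_n:=F^{-n}(V)$ produces an increasing sequence of tame open sets satisfying the analogues of (a) and (b) automatically. I would take $\UUU_0$ to be $V_{n_0}$ for a suitable $n_0$ chosen in the next step.

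Second, I would collect essential boundary classes. For each $n$, let $\EEE_n$ denote the set of isotopy classes rel $\PPP_F$ of essential components of $\partial V_n$. Because $\PPP_F$ accumulates only on the finite set $\PPP'_F\subset V$, the portion of $\PPP_F$ outside a fixed attracting neighborhood of $\PPP'_F$ is finite, so the cardinality $|\EEE_n|$ is bounded uniformly in $n$, and $\EEE_\infty:=\bigcup_n\EEE_n$ is finite. Non-degeneracy guarantees $\EEE_\infty\neq\emptyset$, for otherwise every component of $\cbar\smm V_n$ would eventually be D-type, contradicting the hypothesis applied to $U=V$. Pick $n_0$ so that every class in $\EEE_\infty$ is realized on $\partial V_{n_0}$, set $\UUU_0:=V_{n_0}$, and take $\Gamma$ to be representatives on $\partial\UUU_0$ of these classes. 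Condition (c) then becomes the assertion that pulling back $\UUU_n$ neither loses nor gains essential isotopy classes, and complete stability of $\Gamma$ follows directly from (c) together with the definition $\UUU_{n+1}=F^{-1}(\UUU_n)$.

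Third, conditions (d) and (e) are checked inductively. Using that the $F$-image of a Q-type set is Q-type, and that preimages of D-type (resp.\ A-type) sets decompose into D-type (resp.\ D- or A-type) pieces, one shows that each Q-type component of $\UUU_{n+1}$ must contain a unique Q-type component of $\UUU_n$: any component of $\UUU_{n+1}\smm\overline{\UUU_n}$ is a collar attached to an essential boundary curve in $\Gamma$, hence A-type and not Q-type. The symmetric statement for $\LLL_n$ follows from the analogous analysis of complementary components, since the essential boundary curves of $\LLL_n$ coincide with those of $\UUU_n$.

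The main obstacle is the ``persistence'' step used when choosing $n_0$: I must show that once an essential isotopy class appears on $\partial V_{n_0}$ it persists on $\partial V_n$ for every $n\geq n_0$, so that all of $\EEE_\infty$ is realized simultaneously on a single boundary. Equivalently, the preimage under $F$ of an essential boundary curve must contribute an isotopic essential curve to $\partial V_{n+1}$, rather than breaking into D-type pieces. This is where non-degeneracy enters in a nontrivial way, coupled with the observation that the complement $\LLL_n$ permanently traps some non-D-type dynamics; carefully controlling this dichotomy is the technical heart of the construction.
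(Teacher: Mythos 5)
Your construction starts on the right track and closely parallels the paper's initial setup: take an attracting/Böttcher neighborhood $V$ of $\PPP'_F$ with $V\Subset F^{-1}(V)$, set $V_n=F^{-n}(V)$, and observe that the number of essential isotopy classes appearing on the boundaries is uniformly bounded. However, there are two genuine gaps, one of which you partially acknowledge but which is more serious than you indicate.

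First, you set $\Gamma$ equal to $\EEE_\infty$, the union over $n$ of all essential isotopy classes that ever appear on $\partial V_n$. The paper does not do this, and for good reason: an essential class can appear at some finite stage $m$ and then \emph{permanently} disappear (for instance, when the Q-type component of $\cbar\smm V_m$ on one side of the curve shrinks, under further preimages of $V$, to only D-type and A-type pieces). If such a transient class is included in $\Gamma$, then condition (c) fails in the ``vice versa'' direction: that class would not be realized on $\partial\UUU_n$ for large $n$. The paper instead defines $\G$ to be the subset of persistent classes — those $\g$ such that for every $N$ there exists $n>N$ with a curve isotopic to $\g$ on $\partial V_n$ — and only this subset satisfies (c). Relatedly, your step ``pick $n_0$ so that every class in $\EEE_\infty$ is realized on $\partial V_{n_0}$'' need not be possible: a transient class $c$ that appears at stage $5$ and vanishes forever, combined with another class $c'$ first appearing at stage $10$, yields $\EEE_\infty$ with no single level realizing both.

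Second, and this is the step you flag as the ``technical heart'' and leave unproved: the claim that once a (persistent) class appears on some $\partial V_n$, it reappears on $\partial V_{n+1}$. In the paper this is the Figure 1 argument, and it is where the definition of $\G$ as persistent classes is actually used. Given $\g\subset\partial V_n$ essential and isotopic to some class in $\G$, persistence furnishes a curve $\alpha\subset\partial V_N$ isotopic to $\g$ with $N>n+1$. The component $D_{n+1}$ of $V_{n+1}$ containing $\g$ has $\overline{D_{n+1}}$ disjoint from $\alpha$, so there is a unique curve $\beta\subset\partial D_{n+1}$ separating $\g$ from $\alpha$; since $\g\sim\alpha$, this $\beta$ is isotopic to $\g$. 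This shows the set $\G_n$ of persistent classes realized on $\partial V_n$ is nondecreasing, hence eventually equals $\G$. Notice that this argument is unavailable for a transient class precisely because no $\alpha$ at a far level $N$ exists, which is another reason your $\Gamma=\EEE_\infty$ cannot work. Finally, your treatment of (d) and (e) is too brief: the uniqueness of the Q-type component of $V_n$ inside a Q-type component of $V_{n+1}$ relies on the separation argument just described, and (e) further requires the paper's counting identity $s(n)+r(n)=\#\G+1$ (numbers of Q-type components of $V_n$ and of $\cbar\smm V_n$) together with monotonicity of $s(n)$ to conclude both quantities stabilize.
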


\begin{proof}
Pick Koenigs or B\"{o}ttcher disks at every cycles in $\PPP'_F$ such that their boundaries are disjoint from $\PPP_F$. Denote their union by $V_0$. Then $V_0\Subset F^{-1}(V_0)$ and $\{F^k(V_0)\}$ converges to $\PPP'_F$ as $k\to\infty$. Denote $V_n=F^{-n}(V_0)$ for $n\ge 1$. Then $V_{n-1}\Subset V_n$.

Note that components of $\partial V_n$ are pairwise disjoint for all $n\ge 0$. There exists a multicurve $\Lambda_n$ such that each curve in $\Lambda_n$ is contained in $\bigcup_{i=0}^n\partial V_i$ and each essential curve on $\bigcup_{i=0}^n\partial V_i$ is isotopic to a curve in $\Lambda_n$. It follows that each curve in $\Lambda_n$ is isotopic to a curve in $\Lambda_{n+1}$. In particular, $\#\Lambda_n$ is increasing.

Denote by $m$ the number of components of $V_0\cup\PPP_F$. Then $\#\Lambda_n\le m-3$ for all $n\ge 0$ since each curve in $\Lambda_n$ is disjoint from $V_0\cup\PPP_F$. So there exists $N\ge 0$ such that $\#\Lambda_n$ is a constant for $n\ge N$. Thus there is a multicurve $\Lambda$ such that for any $n\ge 0$, each essential curve on $\partial V_n$ is isotopic to a curve in $\Lambda$.

Define a sub-multicurve $\G\subset\Lambda$ by $\g\in\G$ if for any $N>0$, there exists a curve $\beta\subset\partial V_n$ with $n>N$ such that $\beta$ is isotopic to $\g$. It is well-defined since $\G$ is non-empty by the condition that $F$ is non-degenerate. By the definition of $\G$, there exists $n_1\ge 0$ such that each essential curve on $\partial V_n$ with $n\ge n_1$ is isotopic to a curve in $\G$.

\begin{figure}[htbp]
\begin{center}
\includegraphics[width=11cm]{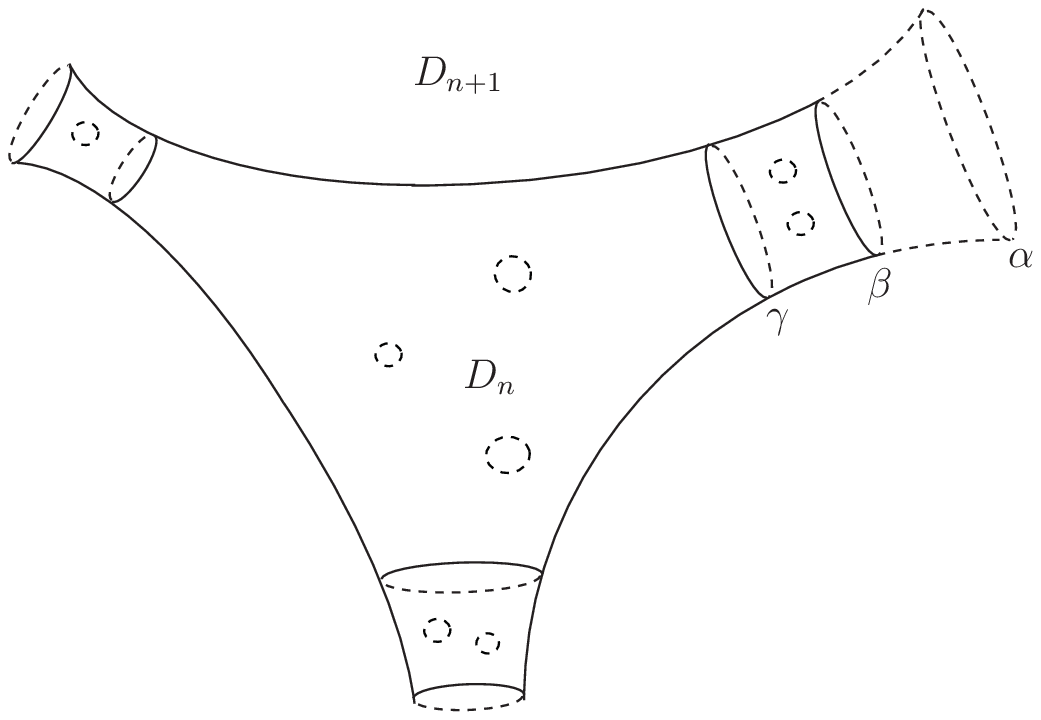}
\end{center}
\begin{center}{\sf Figure 1. A Q-type component of $V_n$}
\end{center}
\end{figure}

Let $\g\subset\partial V_n$ be an essential curve with $n\ge n_1$. It is isotopic to a curve in $\G$. By the definition of $\G$, it is also isotopic to a curve $\alpha\subset\partial V_N$ for some integer $N>n+1$. Let $D_{n+1}$ be the component of $V_{n+1}$ that contains $\g$. Since $\overline{D_{n+1}}$ is disjoint from $\alpha$, there exists a unique curve $\beta\subset\partial D_{n+1}$ such that $\beta$ separates $\g$ from $\alpha$. Thus $\beta$ is isotopic to $\g$ since $\g$ is isotopic to $\alpha$. Refer to Figure 1.

For $n\ge n_1$, let $\G_n\subset\G$ be the multicurve defined by $\g\in\G_n$ if $\g$ is isotopic to a curve on $\partial V_n$. The above discussion shows that $\G_n\subset\G_{n+1}$. Thus there exists $n_2\ge n_1$ such that $\G_n=\G_{n_2}$ for $n\ge n_2$. By the definition of $\G$, we have $\G_n=\G$ for $n\ge n_2$.

Let $D_n$ be a Q-type component of $V_n$ with $n\ge n_2$. Let $D_{n+1}$ be the component of $V_{n+1}$ that contains $D_n$. Then $D_{n+1}$ is also Q-type.
Let $B$ be a component of $D_{n+1}\smm D_n$. Then $\g=\partial B\cap\partial D_n$ is a curve. If $\g$ is non-essential, then $B$ is D-type since $D_n$ is Q-type. If $\g$ is essential, then it is isotopic to a curve $\beta\subset\partial D_{n+1}$ by the above discussion. Since $D_n$ is Q-type, $B$ is contained in the closed annulus bounded by $\g$ and $\beta$. Thus $B$ is not Q-type. It implies that $D_{n+1}$ contains exactly one Q-type component of $V_n$. See Figure 1.

Let $s(n)$ be the number of Q-type components of $V_n$ and $r(n)$ be the number of Q-type components of $\cbar\smm V_n$. Then $s(n)+r(n)=\#\G+1$ and $s(n)$ is increasing for $n\ge n_2$. So there exists $n_3\ge n_2$ such that $s(n)$ is a constant for $n\ge n_3$. This shows that each Q-type component of $V_{n+1}$ contains exactly one Q-type component of $V_n$, and each Q-type component of $\cbar\smm V_{n}$ contains exactly one Q-type component of $\cbar\smm V_{n+1}$.

Set $\UUU_0=V_n$ for some $n\ge n_3$. It satisfies the conditions of the theorem.
\end{proof}

We will call $\G$ a {\bf canonical multicurve} of $F$ and $(\UUU_0, \LLL_0)$ a {\bf canonical decomposition} of $F$, if they satisfy the conditions of Theorem \ref{decomposition}. Canonical decompositions and canonical multicurves are uniquely determined by semi-rational maps in the sense of homotopy by the next proposition.

\begin{proposition}\label{canonical}
Let $(\UUU_0, \LLL_0)$ be a canonical decomposition of $F$.
Let $V_0\Subset\cbar$ be a tame set such that $\PPP'_F\subset V_0$ and $\partial V_0$ is disjoint from $\PPP_F$. Suppose that $V_0\Subset F^{-1}(V_0)$ and $\{F^k(V_0)\}$ converges to $\PPP'_F$ as $k\to\infty$. Set $V_n=F^{-n}(V_0)$ and $\UUU_n=F^{-n}(\UUU_0)$ for $n\ge 0$. Then there exists $N\ge 0$ such that $(V_n, \cbar\smm V_n)$ is a canonical decomposition of $F$ for $n\ge N$. Moreover, the following conditions hold for $n\ge N$.

(a) Each essential curve on $\partial V_n$ is isotopic to an essential curve on $\partial\UUU_0$, and vice versa.

(b) There exist integers $0<i<j$ such that $\UUU_i\subset V_n\subset\UUU_j$. Each Q-type component of $\UUU_j$ contains exactly one Q-type component of $V_n$ and each Q-type component of $V_n$ contains exactly one Q-type component of $\UUU_i$.
\end{proposition}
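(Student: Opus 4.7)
The plan is to first establish the two-sided sandwich
\[
\UUU_{n-k_2}\subset V_n\subset \UUU_{n+k_1}
\]
for suitable constants $k_1,k_2$ and all sufficiently large $n$, using the two convergence hypotheses. Since $\UUU_0$ is an open neighborhood of $\PPP'_F$ and $\{F^k(V_0)\}\to \PPP'_F$ by hypothesis, there is some $k_1$ with $F^{k_1}(V_0)\subset \UUU_0$; pulling back gives $V_0\subset \UUU_{k_1}$ and hence $V_n\subset \UUU_{n+k_1}$. Symmetrically, combining $\PPP'_F\subset V_0$ with Theorem~\ref{decomposition}(b) yields some $k_2$ with $F^{k_2}(\UUU_0)\subset V_0$, so $\UUU_0\subset V_{k_2}$ and therefore $\UUU_{n-k_2}\subset V_n$ for $n\ge k_2$. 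This already proves the inclusions asserted in part~(b), with $i=n-k_2$ and $j=n+k_1$.

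Next I would set up the bijection between Q-type components demanded in part~(b). Iterating Theorem~\ref{decomposition}(d) yields a bijection between the Q-type components of $\UUU_{n-k_2}$ and those of $\UUU_{n+k_1}$ via nested containment. Given a Q-type component $D$ of $V_n$, let $E$ be the component of $\UUU_{n+k_1}$ that contains it: $E$ cannot be D-type (otherwise $D\subset E$ would be D-type) nor A-type (same argument), so $E$ is Q-type. Conversely, for a Q-type component $E$ of $\UUU_{n+k_1}$, the unique Q-type component $E'$ of $\UUU_{n-k_2}$ inside $E$ lies in a unique component $D$ of $V_n$; since $E'\subset D$ is Q-type, so is $D$, and $D\subset E$ by connectedness. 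Uniqueness in both directions is then forced by the chain $E'\subset D\subset E$ together with the bijection between Q-type components of $\UUU_{n-k_2}$ and of $\UUU_{n+k_1}$.

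For part~(a), every essential curve $\gamma\subset\partial V_n$ must bound a Q-type component $D$ of $V_n$: a D-type component is contained in a disk with at most one $\PPP_F$-point, and an A-type component lies in an annulus disjoint from $\PPP_F$, so curves on the boundary of either type are inessential. Hence $\gamma$ lies in the annular region $E\smm\overline{E'}$ between the sandwiching Q-type components. Both $\partial E$ and $\partial E'$ have their essential curves isotopic to curves of $\G$ by Theorem~\ref{decomposition}(c), and iterating Theorem~\ref{decomposition}(d) identifies the essential boundary isotopy classes of $E'$ with those of $E$. A topological classification of essential Jordan curves in the multiply-connected region $E\smm\overline{E'}$ relative to $\PPP_F$ then forces $\gamma$ to be isotopic to a curve in $\G$. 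Conversely, each $\alpha\in\G$ is realised on $\partial V_n$ up to isotopy, by a counting argument using the Q-type bijection of step~2 together with the identity $s(n)+r(n)=\#\G+1$ from the proof of Theorem~\ref{decomposition}: the total number of essential boundary isotopy classes is a function of the combinatorics of Q-type components, which has been pinned down in step~2.

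Finally, conditions~(a)--(e) of Theorem~\ref{decomposition} for $(V_n,\cbar\smm V_n)$ are then immediate: (a) $V_n\Subset V_{n+1}$ is obtained by pulling back $V_0\Subset V_1=F^{-1}(V_0)$; (b) follows from $F^{n+k}(V_n)=F^k(V_0)\to \PPP'_F$; (c) is exactly part~(a) of the proposition; and (d), (e) follow by applying the Q-type bijection of step~2 to the nested pair $(V_n,V_{n+1})$ in place of the pair $(\UUU_n,\UUU_{n+1})$. The main obstacle will be the topological matching in step~3: rigorously justifying that in the bracketed region $E\smm\overline{E'}$ every essential Jordan curve rel $\PPP_F$ must be isotopic to a boundary component of $E$ (equivalently of $E'$). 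This is where the combinatorial structure of the canonical multicurve $\G$ together with the nested Q-type components does the real work, and the verification amounts to showing that the surface $E\smm\overline{E'}$ (marked by the finitely many $\PPP_F$-points that fall in it) admits no new essential isotopy class beyond those already realised on its boundary.
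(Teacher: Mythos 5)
Your step 1 (the two-sided sandwich $\UUU_{n-k_2}\subset V_n\subset\UUU_{n+k_1}$) is exactly what the paper does, and your step 2 is in the same spirit as the paper's argument for (b), though you should spell out that uniqueness really rests on the fact that every component of $E\smm\overline{E'}$ fails to be Q-type (iterated Theorem~\ref{decomposition}(d)); the chain $E'\subset D\subset E$ and the abstract bijection alone do not rule out a second Q-type component of $V_n$ lying in $E\smm\overline{E'}$, nor do they show that an arbitrary Q-type component of $V_n$ contains a Q-type component of $\UUU_{n-k_2}$.

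Step 3 contains a genuine error and an acknowledged gap. The claim that ``every essential curve $\gamma\subset\partial V_n$ must bound a Q-type component $D$ of $V_n$'' is false: an A-type component $D$ of $V_n$ is (by definition of A-type) \emph{not} D-type, so it lies in an annulus $A$ disjoint from $\PPP_F$ both of whose complementary disks contain at least two points of $\PPP_F$; the boundary curves of $D$ isotopic to the core of $A$ are therefore essential. Indeed, the paper's proof explicitly treats ``an A-type or a Q-type component $D$ of $V_n$'' and never restricts to the Q-type case. The subsequent ``topological classification'' of essential curves in $E\smm\overline{E'}$ is exactly the part you flag as the main obstacle, and it is where the paper's argument differs most from yours: rather than analysing the whole marked surface $E\smm\overline{E'}$, the paper proceeds curve-by-curve, locating for each essential $\gamma\subset\partial D$ a boundary curve $\beta\subset\partial U'$ that separates $U'$ from $\gamma$, noting that $\beta$ is isotopic to some $\alpha\subset\partial\UUU_{n+q}$, and observing that $\gamma$ is wedged between $\beta$ and $\alpha$, hence isotopic to $\beta$. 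The converse direction in the paper is likewise a concrete separation argument (take $\beta\subset\partial\UUU_{n-p}$ and $\beta'\subset\partial\UUU_{n+q}$ isotopic to a given $\g\in\G$, and find the unique curve of $\partial V_n$ separating them), not a counting argument; your counting via $s(n)+r(n)=\#\G+1$ would at best count isotopy classes of Q-type components, not exhibit the required realisation of each $\g\in\G$ on $\partial V_n$.

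For the final verification of condition~(d) for $(V_n,\cbar\smm V_n)$ you cannot simply ``apply the Q-type bijection'': you also need to show that each component of $D_{n+1}\smm\overline{D_n}$ is not Q-type, which the paper obtains by embedding it in a component of $\UUU_{n+q+1}\smm\overline{\UUU_{n-p}}$ and iterating Theorem~\ref{decomposition}(d). So while your overall route is close to the paper's, the central isotopy-matching step needs to be replaced by the paper's separation argument (or a fully worked-out version of the classification in $E\smm\overline{E'}$), and the A-type case of part (a) must be handled.
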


Before proving the above proposition, we want to point out the following facts. It is easy to check that for $n\ge 1$,  $(F^{-n}(\UUU_0), F^{-n}(\LLL_0))$ is also a canonical decomposition of $F$, and $(\UUU_0, \LLL_0)$ is a canonical decomposition of $F^n$. Let $\UUU'_0$ be the union of A-type or Q-type components of $\UUU_0$, then $(\UUU'_0, \cbar\smm\UUU'_0)$ is also a canonical decomposition of $F$. Moreover, each A-type component of $\cbar\smm\UUU'_0$ is a closed annulus. In the following, we will prove Proposition \ref{canonical}.

\begin{proof}
Both $\{F^k(V_0)\}$ and $\{F^k(\UUU_0)\}$ converge to $\PPP'_F$ as $k\to\infty$. So there exist integers $p, q>0$ such that $\UUU_0\Subset V_p\Subset \UUU_{p+q}$. Thus for $n\ge p$,
$$
\UUU_{n-p}\Subset V_{n}\Subset\UUU_{n+q}.
$$

(a) Let $D$ be an A-type or a Q-type component of $V_n$ for some $n\ge p$. Then $D$ is contained in a component $U$ of $\UUU_{n+q}$, which is either an A-type or a Q-type. If $U$ is A-type, then each essential curve on $\partial D$ is isotopic to a curve on $\partial U$. Thus it is also isotopic to a curve on $\partial\UUU_{0}$ since $\UUU_0$ is a canonical decomposition.

Suppose that $U$ is Q-type. Then there is a unique Q-type component $U'$ of $\UUU_{n-p}$ such that $U'\subset U$. Moreover, each component of $U\smm\overline{U'}$ is not Q-type. If $D\subset U\smm U'$, then each essential curve on $\partial D$ is isotopic to a curve on $\partial U$. Thus it is also isotopic to a curve on $\partial\UUU_{0}$.

If $D\cap U'\neq\emptyset$, then $U'\subset D$. For each essential curve $\g$ on $\partial D$, there is a unique curve $\beta$ on $\partial U'$ such that it separates $U'$ from $\g$. On the other hand, $\beta$ is isotopic to a curve $\alpha$ on $\partial\UUU_{n+q}$. Since $U'$ is Q-type, $\beta$ must separate $U'$ from $\alpha$. This implies that both $\g$ and $\alpha$ are contained in the same complementary component of $\beta$. Thus $\g$ is isotopic to $\beta$ and hence is isotopic to a curve on $\partial\UUU_{0}$.

Conversely, for each essential curve $\g$ on $\partial\UUU_0$, it is isotopic to a curve $\beta\subset\partial\UUU_{n-p}$ and a curve $\beta'\subset\partial \UUU_{n+q}$. Let $D$ be the component of $V_n$ that contains $\beta$. Since $\beta'$ is disjoint from $D$, there is a unique curve $\alpha$ on $\partial D$ such that $\alpha$ separates $\beta$ from $\beta'$. Thus $\alpha$ is isotopic to $\beta$ and hence is isotopic to $\g$.

(b) Let $B$ be a Q-type component of $\UUU_{n+q}$. Then there is a unique Q-type component $B'$ of $\UUU_{n-p}$ such that $B'\subset B$. Let $D$ be the component of $V_n$ that contains $B'$, then $D$ is Q-type and $D\subset B$. Obviously, $D$ contains exactly one Q-type component of $\UUU_{n-p}$. Since each component of $B\smm\overline{B'}$ is not Q-type, $D$ is the unique Q-type component of $V_n$ contained in $B$. This proves (b).

Let $D_{n+1}$ be a Q-type component of $V_{n+1}$. Then $D_{n+1}$ contains exactly one Q-type component of $\UUU_{n-p}$. Thus $D_{n+1}$ contains exactly one Q-type component of $V_n$ since each Q-type component of $V_n$ also contains exactly one Q-type component of $\UUU_{n-p}$. Let $B$ be a component of $D_{n+1}\smm\overline{D_n}$. Then $B$ is contained in a component of $\UUU_{n+q+1}\smm\overline{\UUU_{n-p}}$. By Theorem \ref{decomposition} (d),
each component of $\UUU_{k+1}\smm\overline{\UUU_k}$ is not Q-type for $k>1$. It deduces that each component of $\UUU_{k+l}\smm\overline{\UUU_k}$ is not Q-type for any integer $l\ge 1$. Therefore $B$ is not Q-type. It concludes that $(V_n, \cbar\smm V_n)$ is a canonical decomposition of $F$. Now the proof is complete.
\end{proof}

Let $(\UUU, \LLL)$ be a canonical decomposition of $F$. From Theorem \ref{decomposition}, we may define a map $\chi_F$ on the collection of Q-type components of $\LLL$ by $\chi_F(L_i)=L_j$ if the unique Q-type component of $F^{-1}(\LLL)$ in $L_i$ maps to $L_j$ by $F$. Since this collection is finite, each Q-type component of $\LLL$ is eventually periodic under $\chi_F$. We will call a Q-type component $L$ of $\LLL$ is {\bf periodic} if $L$ is periodic under $\chi_F$.

Obviously, for each periodic Q-type component $L$ of $\LLL$, $F^{-1}(L)$ has exactly one Q-type component contained in periodic Q-type components of $\LLL$.

Denote by $m\ge 1$ the total number of Q-type components of $\LLL$. Then for each non-periodic Q-type component $L$ of $\LLL$, each component of $F^{-m}(L)$ is not Q-type. Otherwise, assume that $L^m$ is a Q-type component of $F^{-m}(L)$, then $F^k(L^m)$ is also Q-type for $0\le k\le m$. Thus at least two of them, denoted by $L^i$ and $L^j$ with $i<j$, are contained in the same component of $\LLL$. So $L^j\subset L^i$. This implies that $L$ is periodic and hence is a contradiction. Now we have proved the next lemma.

\begin{lemma}\label{L}
(1) Each Q-type component $L$ of $\LLL$ is eventually periodic under $\chi_F$.

(2) For each periodic Q-type component $L$ of $\LLL$, $F^{-1}(L)$ has exactly one Q-type component contained in periodic Q-type components of $\LLL$.

(3) There exists $N\ge 1$ such that for each non-periodic Q-type component $L$ of $\LLL$ and any $n\ge N$, each component of $F^{-n}(L)$ is not Q-type.
\end{lemma}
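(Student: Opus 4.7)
My plan is to formalize the three claims in the order they are stated, closely following the discussion immediately preceding the lemma. Throughout, let $\SSS$ denote the collection of Q-type components of $\LLL$; it is finite because $\UUU_0$ is tame. By Theorem \ref{decomposition}(e), every element of $\SSS$ contains a unique Q-type component of $\LLL_1=F^{-1}(\LLL)$, and since the forward image of a Q-type set is Q-type and $F$ maps each component of $\LLL_1$ onto a component of $\LLL$, the rule $\chi_F$ indeed defines a self-map of the finite set $\SSS$. Part (1) is then immediate: every orbit of a self-map of a finite set is eventually periodic.

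For part (2), recall that $\chi_F$ restricts to a bijection on its subset of periodic points. Hence for any periodic $L\in\SSS$ there is a unique periodic $L'\in\SSS$ with $\chi_F(L')=L$. Let $\wt{L'}$ be the unique Q-type component of $\LLL_1$ inside $L'$; then $F(\wt{L'})=\chi_F(L')=L$, so $\wt{L'}$ is a Q-type component of $F^{-1}(L)$ contained in the periodic $L'$. Conversely, any Q-type component $M$ of $F^{-1}(L)$ contained in some periodic $L''\in\SSS$ is itself a component of $\LLL_1$, and being Q-type and contained in $L''$ it must coincide with the unique such component inside $L''$. Then $\chi_F(L'')=F(M)=L$ forces $L''=L'$, hence $M=\wt{L'}$.

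For part (3), set $m=\#\SSS$ and take $N=m$. Suppose for contradiction that some non-periodic $L\in\SSS$ admits a Q-type component $L^n$ of $F^{-n}(L)$ with $n\ge m$. Since $F^{-n}(L)\subset\LLL_n$ is a union of components of $\LLL_n$, $L^n$ is actually a component of $\LLL_n$; inductively, each $F^k(L^n)$ for $0\le k\le n$ is a Q-type component of $\LLL_{n-k}$, using again that Q-typeness is preserved under forward images and that $F$ sends each component of $\LLL_{n-k+1}$ onto a component of $\LLL_{n-k}$. Because $\LLL_{n-k}\subset\LLL$, the connected Q-type set $F^k(L^n)$ is contained in a unique element $Q_k\in\SSS$. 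A short check (identifying $F^k(L^n)$ for $k<n$ with the unique Q-type component of $\LLL_1$ inside $Q_k$ via Theorem \ref{decomposition}(e)) yields $\chi_F(Q_k)=Q_{k+1}$, so $Q_k=\chi_F^k(Q_0)$ for every $k$. Pigeonhole on the $n+1\ge m+1$ elements $Q_0,\dots,Q_n$ of $\SSS$ produces $0\le i<j\le n$ with $\chi_F^i(Q_0)=\chi_F^j(Q_0)$; then $\chi_F^i(Q_0)$, and hence $L=Q_n=\chi_F^{n-i}(\chi_F^i(Q_0))$, is periodic, contradicting the choice of $L$.

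The only delicate step is the identity $\chi_F(Q_k)=Q_{k+1}$ in part (3): one must observe that the connected Q-type set $F^k(L^n)$ lies inside the unique Q-type component of $\LLL_1$ contained in $Q_k$, so that its $F$-image sits simultaneously in $\chi_F(Q_k)$ and in $Q_{k+1}$, forcing these two components of $\LLL$ to coincide. Once this bookkeeping is in place, the remainder is routine pigeonhole on the finite set $\SSS$.
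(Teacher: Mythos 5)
Your proof is correct and follows the same approach as the paper's own (very terse) argument: part (1) from finiteness of the set of Q-type components, part (2) from the fact that $\chi_F$ is a bijection on its periodic points, and part (3) by pigeonhole on the unique Q-type components of $\LLL$ containing the iterates $F^k(L^n)$. Your write-up is more detailed — in particular you spell out part (2), which the paper dismisses as obvious, and you carefully justify the identity $\chi_F(Q_k)=Q_{k+1}$, which the paper states only implicitly through its nesting observation $L^j\subset L^i$.
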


Let $L$ be a Q-type component of $\LLL$. We say that a multicurve $\G$ of $F$ is {\bf essentially} contained in $L$ if for each curve $\g\in\G$, $\g\subset L$ and $\g$ is not isotopic to a curve on $\partial L$.

For a multicurve $\G$ of $F$ and an integer $p\ge 1$, we denote by $\lambda(\G, F^p)$ the leading eigenvalue of the transition matrix of $\G$ under $F^p$.

\begin{theorem}\label{obstruction}
Let $F$ be a non-degenerate and generic semi-rational map. Let $(\UUU, \LLL)$ be a canonical decomposition of $F$ and $\G_F$ be a canonical multicurve of $F$. Then $F$ is c-equivalent to a rational map if and only if $\lambda(\G_F)<1$ and for each periodic Q-type component $L$ of $\LLL$ with period $p\ge 1$ and any multicurve $\G$ contained essentially in $L$ , we have $\lambda(\G, F^p)<1$.
\end{theorem}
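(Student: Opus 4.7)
The plan is to prove both directions. The main tools are Theorem~B (which translates c-equivalence into the absence of Thurston obstructions), together with the structural results Theorem~\ref{decomposition} and Lemma~\ref{L}.

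For necessity, suppose $F$ is c-equivalent to a rational map $f$. Then by Theorem~B no stable multicurve of $F$ has leading eigenvalue $\ge 1$. Since $\G_F$ is completely stable by Theorem~\ref{decomposition}, in particular stable, we get $\lambda(\G_F)<1$ at once. For the second condition I would observe that for a periodic Q-type $L\subset\LLL$ of period $p$, the iterate $F^p$ is a generic semi-rational map (with $\PPP'_{F^p}=\PPP'_F$) c-equivalent to $f^p$, hence also has no Thurston obstruction. Given a multicurve $\G$ essentially in $L$ with $\lambda(\G,F^p)\ge 1$, I would pass via Lemma~\ref{irreducible} to an irreducible sub-multicurve of the same spectral radius, and then close it under essential $F^p$-preimages. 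Because each Q-type component of $\LLL$ is compact and disjoint from $\PPP'_F$ it contains only finitely many points of $\PPP_F$, so only finitely many isotopy classes of essential curves occur in this closure; the process therefore terminates and produces a finite $F^p$-stable multicurve $\hat\G$ with $\lambda(\hat\G,F^p)\ge\lambda(\G,F^p)\ge 1$ (Lemma~\ref{submulticurve}), contradicting absence of obstruction for $F^p$.

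For sufficiency, assume the two spectral bounds and suppose, for contradiction, that $F$ has a Thurston obstruction; by Lemma~\ref{irreducible} fix an irreducible multicurve $\G'$ with $\lambda(\G')\ge 1$. After isotoping each $\g\in\G'$ off $\partial\UUU_0$, I classify $\g$ as \emph{Type~(A)} if isotopic to a curve in $\G_F$; \emph{Type~(B)} if essentially in a periodic Q-type component of $\LLL$; or \emph{Type~(C)} if essentially in a non-periodic Q-type component of $\LLL$ or in a Q-type component of $\UUU_0$. The first key step is to rule out Type~(C). For non-periodic Q-type $L\subset\LLL$, Lemma~\ref{L}(3) gives an $N$ such that every component of $F^{-N}(L)$ fails to be Q-type, so essential $F^{-N}$-preimages of a Type~(C) curve lie in D-type or A-type components and are either non-essential or isotopic to curves in $\G_F$. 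An analogous nesting argument based on Theorem~\ref{decomposition}(d) and the bijection induced by $F$ on Q-type components of $\UUU$ deals with Q-type components of $\UUU_0$. Consequently the sub-block of the transition matrix indexed by Type~(C) curves is eventually nilpotent; by Lemma~\ref{block} its removal does not decrease $\lambda(\G')$, and irreducibility then forces $\G'$ to contain no Type~(C) curves.

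Writing $\G'=\G_A\sqcup\G_B$, complete stability of $\G_F$ gives that essential $F$-preimages of Type~(A) curves are again Type~(A); hence preimages of $\G_A$ contain no components isotopic to curves in $\G_B$, and Lemma~\ref{block} yields $\lambda(\G')=\max\{\lambda(\G_A),\lambda(\G_B)\}$. By irreducibility, $\G'$ is pure of one type. If $\G'=\G_A$, then $\G'$ is a sub-multicurve of $\G_F$ up to isotopy and Lemma~\ref{submulticurve} gives $\lambda(\G')\le\lambda(\G_F)<1$. If $\G'=\G_B$, irreducibility forces $\G'$ to meet every component of a single period-$p$ cycle $L_0,\dots,L_{p-1}$ of periodic Q-type components; by Lemma~\ref{L}(2) the essential $F$-preimage of a curve essentially in $L_k$ lies (up to isotopy) in $L_{k-1}$. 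The transition matrix of $\G'$ under $F$ is therefore cyclic with $p$ blocks, and its $p$-th power restricted to $L_0$ equals $M(\G'\cap L_0,F^p)$; hence $\lambda(\G',F)^p=\lambda(\G'\cap L_0,F^p)<1$ by hypothesis. Both cases contradict $\lambda(\G')\ge 1$, so $F$ has no Thurston obstruction and Theorem~B delivers the rational realisation. The hard part I anticipate is the exclusion of Type~(C) curves essentially contained in a Q-type component of $\UUU_0$: this requires combining Theorem~\ref{decomposition}(d) with the periodic bijection on $\UUU$-Q-types to preclude a closed cycle of isotopy classes with spectral radius $\ge 1$ among such curves, and is the only step beyond the otherwise routine block-matrix bookkeeping.
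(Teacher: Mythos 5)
Your overall architecture for sufficiency — extract an irreducible sub-multicurve, apply the stability dichotomy so $\G'$ is either all $\G_F$-classes or none, dispose of the non-periodic Q-type case by Lemma~\ref{L}(3), and finish the periodic case with the cyclic block decomposition $M(\G')^p=\mathrm{diag}(M_0,\dots,M_{p-1})$ — agrees with the paper's. But you create, and then explicitly leave open, a case the paper never encounters. After passing to an irreducible $\G_0$ with $\lambda(\G_0)=\lambda(\G_1)$, the paper's key move is to choose $n_0$ so large that $F^{n_0}(\UUU)$ (which shrinks to $\PPP'_F$) is disjoint from every curve of $\G_0$; then $F^{-n_0}(\g)\subset\LLL$ for each $\g\in\G_0$, and irreducibility produces a multicurve $\G'\subset\LLL$ with exactly the isotopy classes of $\G_0$, hence the same leading eigenvalue. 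This relocation step kills your Type~(C) curves ``essentially in a Q-type component of $\UUU_0$'' outright. Your proposed substitute — ``an analogous nesting argument based on Theorem~\ref{decomposition}(d) and the bijection induced by $F$ on Q-type components of $\UUU$'' — does not work as stated: Theorem~\ref{decomposition}(d) controls the inclusions $\UUU_n\subset\UUU_{n+1}$, not the pullback $F^{-1}$, and a curve essential in a Q-type $U\subset\UUU_0$ can have essential preimages scattered across several components of $\UUU_1$, with no evident eventual-nilpotence of the corresponding block. You flag this as ``the hard part,'' and absent the relocation trick it is a genuine hole.

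The necessity direction also has a soft spot. You close $\G$ under essential $F^p$-preimages and justify termination by the fact that a compact Q-type component $L$ of $\LLL$ meets $\PPP_F$ in only finitely many points. But essential $F^p$-preimages do not stay inside $L$: components of $F^{-p}(\g)$ landing in A-type pieces of $F^{-p}(\LLL)$ are isotopic to curves of $\G_F$, and further pullbacks of those continue to produce $\G_F$-classes. The closure therefore lives in (finitely many isotopy classes essential in $L$) $\cup$ (finitely many $\G_F$-classes) — still finite, but not for the reason you give — and one must also normalize the growing collection to a genuine disjoint multicurve at each stage before invoking Lemma~\ref{submulticurve}. The paper simply asserts necessity ``follows directly from Theorem B''; writing it out requires exactly this bookkeeping. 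Finally, in the sufficiency step your appeal to Lemma~\ref{block} to split off $\G_A$ from $\G_B$ is superfluous once you establish purity of type from irreducibility and stability; the paper omits it.
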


\begin{proof}
The necessity follows directly from Theorem B. In the following, we prove the sufficiency.

Let $\G_1$ be a multicurve of $F$ with $\lambda(\G_1)>0$. Then there is an irreducible multicurve $\G_0\subset\G_1$ such that $\lambda(\G_0)=\lambda(\G_1)$ by Lemma \ref{irreducible}.

There exists $n_0\ge 0$ such that $F^{n_0}(\UUU)$ is disjoint from $\G_0$. Thus for each $\g\in\G_0$, $F^{-n_0}(\g)$ is contained in $\LLL$. Since $\G_0$ is irreducible, we may choose a multicurve $\G'$ in $\LLL$ such that each curve in $\G'$ is isotopic to a curve in $\G_0$, and vice versa. Thus $\lambda(\G')=\lambda(\G_0)=\lambda(\G_1)$.

Since $\G_F$ is stable and $\G'$ is irreducible, either each curve in $\G'$ is isotopic to a curve in $\G_F$, or every curve in $\G'$ is not isotopic to a curve in $\G_F$. In the former case, $\G'$ is contained in $\G_F$ in the sense of isotopy and hence $\lambda(\G')<\lambda(\G_F)<1$ by Lemma \ref{submulticurve}.

Now we suppose that every curve in $\G'$ is not isotopic to a curve in $\G_F$. Then each curve in $\G'$ is contained in Q-type components of $\LLL$.

If a curve $\g\in\G'$ is contained in a non-periodic Q-type component $L$ of $\LLL$, then as $n$ is large enough, each component of $F^{-n}(\g)$ is contained in a D-type or an A-type component of $F^{-n}(\LLL)$ by Lemma \ref{L} (3). This contradicts the condition that $\G'$ is irreducible. Thus each curve $\g\in\G'$ is contained in a periodic Q-type component of $\LLL$.

Let $L_0$ be a periodic Q-type component of $\LLL$ with period $p\ge 1$ such that it contains curves of $\G'$. Denote $L_i=\chi_F(L_0)$ for $1\le i\le p$.
Then $L_{p}=L_0$.  Let $\Lambda_i\subset\G'$ be the sub-multicurve contained in $L_i$. Since $\G'$ is irreducible, we know that for $0\le i<p$, each curve $\beta\in\Lambda_{i}$ is isotopic to a curve in $F^{-1}(\g)$ for some $\g\in\Lambda_{i+1}$ by Lemma \ref{L} (2). Conversely, if $\g\in\G'\smm\Lambda_{i+1}$, then $F^{-1}(\g)$ has no component isotopic to a curve in $\Lambda_i$. This implies that $\G'=\bigcup_{i=0}^{p-1}\Lambda_i$ and
$$
M(\G')^p=\left(
\begin{array}{ccc}
M_0 & \cdots & O \\
\vdots & & \vdots \\
O & \cdots & M_{p-1}
\end{array}\right),
$$
where $M_i=M(\Lambda_i, F^p)$. By the condition of the theorem, $\lambda(M_i)<1$ for $0\le i<p$. Thus by Lemma \ref{block}, we have
$$
\lambda(\G')^p=\max\{\lambda(M_0),\cdots, \lambda(M_{p-1})\}<1.
$$
Therefore $F$ is c-equivalent to a rational map by Theorem B.
\end{proof}

\section{Complex components of Julia sets}
We will make use of canonical decompositions to characterize the configuration of complex Julia components in this section.

Let $f$ be a generic sub-hyperbolic rational map. Recall that a periodic Julia component $K$ of $f$ is simple if either $K$ is a single point or a Jordan curve disjoint from $\PPP_f$. It is a complex Julia component otherwise.

\begin{lemma}\label{AQ}
Let $K$ be a Julia component which is not a single point.

(1) If $K$ is wandering, then $f^n(K)$ is A-type as $n$ is large enough.

(2) If $K$ is a periodic Jordan curve disjoint from $\PPP_f$, then $K$ is A-type.

(3) If $K$ is a complex periodic Julia component, then $K$ is Q-type.
\end{lemma}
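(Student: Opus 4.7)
For part (1), since $f$ is generic sub-hyperbolic, $\PPP'_f \subset \FFF_f$ and hence $\PPP_f \cap \JJJ_f$ is finite. The wandering hypothesis gives pairwise disjoint iterates $\{f^n(K)\}$ in $\JJJ_f$, so $f^n(K) \cap \PPP_f = \emptyset$ for $n$ large. Since $f^n(K)$ is a non-degenerate continuum disjoint from $\PPP_f$ and not dense in $\cbar$, one can find two disjoint closed disks in $\cbar \smm (f^n(K) \cup \PPP_f)$ whose complement is an annulus containing $f^n(K)$ and disjoint from $\PPP_f$. The substantive step, which I anticipate to be the main obstacle, is to rule out $f^n(K)$ being D-type. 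The plan is to combine Lemma \ref{L} (finiteness of Q-type components under $\chi_F$) with the Riemann--Hurwitz pull-back argument used in Proposition \ref{degenerate}: if $f^n(K) \subset D_n$ with $|D_n \cap \PPP_f| \leq 1$ for infinitely many $n$, pulling the $D_n$ back along iterates of $f$ produces controlled disks which, together with the finite count of periodic Q-type components of $\LLL$ and the pairwise disjointness of the iterates, force a contradiction with the wandering hypothesis.

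For part (2), $K$ is a periodic Jordan curve of period $p$ with $K \cap \PPP_f = \emptyset$, so a small tubular annulus around $K$ is disjoint from $\PPP_f$; it suffices to rule out D-type. Suppose one complementary disk $U$ of $K$ satisfies $|U \cap \PPP_f| \leq 1$. Pass to a multiple $p' \in \{p, 2p\}$ such that $f^{p'}(U) = U$; then $f^{p'}|_U$ is a branched self-cover of the topological disk $U$ of degree $d = \deg(f^{p'}|_K) \geq 2$ (using that $K$ is not a single point). The open set $U$ cannot lie entirely in $\JJJ_f$, so $U$ meets some Fatou component $W$. Forward-invariance of $U$ under $f^{p'}$ and the sub-hyperbolic condition (no parabolic cycles) force the $f^{p'}$-orbit of $W$ to terminate in a periodic Fatou component inside $U$, which is the immediate basin of an attracting or super-attracting cycle in $\PPP'_f \subset \PPP_f \cap U$. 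The critical orbit attracted to this cycle has an infinite tail inside $U$, so $|U \cap \PPP_f| = \infty$, contradicting $|U \cap \PPP_f| \leq 1$.

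For part (3), $K$ is complex periodic of period $p$, and $(f^p, K)$ is quasi-conformally conjugate to $(g, \JJJ_g)$ on a neighborhood of $K$, for a rational map $g$ of degree $>1$ with connected $\JJJ_g$. To rule out A-type I use a case analysis on the topology of $K$: if $K$ is a Jordan curve, the complex condition supplies some $f^{-n}(K)$ with a non-Jordan component $J$ for some $n \geq 1$, which must contain critical points of $f^n$, so their images (critical values of $f^n$) lie on $K = f^n(J)$, giving $K \cap \PPP_f \neq \emptyset$. If $K$ is not a Jordan curve, the structure of $(g, \JJJ_g)$ --- each periodic Fatou component of $g$ carries an attracting cycle contributing to $\PPP_g$, and a non-Jordan connected $\JJJ_g$ either separates $\cbar$ into at least three components or has $\PPP_g$ meeting $\JJJ_g$ (the dendrite case) --- shows through the conjugacy that $K$ cannot be contained in any annulus disjoint from $\PPP_f$. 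To rule out D-type, a parallel case analysis shows any simply-connected $D \supset K$ contains at least two post-critical points: for $K$ a Jordan curve the sub-hyperbolic basin argument from part (2) applies to either complementary disk of $K$; for $K$ not a Jordan curve, the simply-connected $D$ is forced to engulf at least two complementary components of $K$ each carrying a $\PPP_f$ point, or else $K$ itself supports at least two points of $\PPP_f$.
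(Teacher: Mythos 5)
Your part (2) is essentially correct and is a legitimately different route from the paper, but parts (1) and (3) have real gaps.

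For part (1), you acknowledge that ruling out D-type is the crux and then offer only a sketch: ``pulling the $D_n$ back \ldots produces controlled disks which \ldots force a contradiction with the wandering hypothesis.'' This is not yet a proof --- you never name the mechanism that produces the contradiction. You also need to reduce to the two-alternative dichotomy D-type/A-type, which requires knowing $K$ is a Jordan curve (Theorem A for geometrically finite maps); without that, Q-type is a live possibility and the ``annulus containing $f^n(K)$ disjoint from $\PPP_f$'' you invoke need not exist. The paper's argument is different and short: if $K_n$ were D-type for all $n$, one takes $\wh K_n = \cbar\smm U_n$ where $U_n$ is the complementary disk of $K_n$ containing the union $V$ of periodic Fatou domains; then $f(\wh K_n)=\wh K_{n+1}$ so the forward orbit of $\wh K_0$ avoids $V$, whence by Montel the interior of $\wh K_0$ lies in the Fatou set, yet every Fatou point of a sub-hyperbolic map is eventually absorbed into $V$, a contradiction. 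This is the piece your plan is missing.

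For part (3) your case split on the topology of $K$ is not the paper's route, and the A-type exclusion in the ``$K$ not a Jordan curve'' branch has a gap. Observing that $K$ separates $\cbar$ into at least three components does not rule out A-type: you would further need at least three of those complementary components to contain points of $\PPP_f$ (or $K \cap \PPP_f \neq\emptyset$), and invoking the structure of $(g,\JJJ_g)$ only controls a neighborhood of $K$ --- the preperiodic Fatou domains of $g$ need not carry post-critical points, so this does not follow. The paper instead uses the local hyperbolicity near $\JJJ_f$ directly: if $K$ is D-type, choose $D$ with $D_1\Subset D$ (possible by expansion), and then the Schwarz lemma forces $\wh K$ to a point when $\deg(f^p|_{D_n})=1$, while $\deg>1$ forces a unique fixed critical point in $D$, i.e.\ a super-attracting point in $\PPP_f\smm\PPP'_f$, contradicting genericity; if $K$ is A-type, choose the bounding annulus $A$ with $A_1\Subset A$ and apply the folklore shrinking argument to conclude that $K$ and all its preimages are Jordan curves, hence $K$ is simple, a contradiction. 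You should replace your case analysis by this D-type/A-type dichotomy --- it both closes the gap and avoids reliance on McMullen's renormalization, which is not needed here.

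Part (2) as you wrote it is workable and bypasses the paper's Montel argument by a direct Fatou-component/basin count inside the D-type disk $U$; the observations that one may pass to $p'\in\{p,2p\}$ to fix $U$, that $\deg(f^{p'}|_U)=\deg(f^{p'}|_K)\ge 2$, and that the attracting cycle in the periodic Fatou component inside $U$ lies in $\PPP'_f$ (hence is accumulated by infinitely many $\PPP_f$ points inside $U$, since $f$ is generic) are the right ingredients. The paper's version simply repeats the normality argument of part (1), which is shorter; either works.
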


\begin{proof} Denote $K_n=f^n(K)$ for $n\ge 0$. Let $V$ be the union of all the periodic Fatou domains of $f$. Since $f$ is generic, $V$ is non-empty and each component of $V$ contains points of $\PPP'_f$.

(1) If $K$ is wandering, then $f^n(K)$ is a Jordan curve for all $n\ge 0$ by Theorem A. Assume by contradiction that $K_n$ is D-type for all $n\ge 0$.  Then there is exactly one complementary component $U_n$ of $K_n$ such that $V\subset U_n$. Denote $\wh K_n=\cbar\smm U_n$. Then $K_n\subset\wh K_n$ and $\wh K_n$ contains at most one point of $\PPP_f$. So $f(\wh K_n)=\wh K_{n+1}$. This shows that the forward orbit of $\wh K$ is always disjoint from $V$. Thus the interior of $\wh K$ is contained in the Fatou set. This contradicts the fact that the forward orbit of $\wh K$ is disjoint from $V$.

(2) Suppose that $K$ is a periodic Jordan curve disjoint from $\PPP_f$. Then $K$ is either A-type or D-type. Assume by contradiction that $K$ is D-type. Then $K_n$ is also D-type for all $n\ge 0$. Using the same argument as above, we could deduce again a contradiction. Thus $K$ is A-type.

(3) Suppose that $K$ is a complex periodic Julia component with period $p\ge 1$. If $K$ is D-type, then there is a Jordan domain $D\supset K$ such that $D$ contains at most one point of $\PPP_f$. Denote by $A$ the unique annulus component of $D\smm K$. We may require that $A$ is disjoint from $\PPP_f$. Denote by $\wh K=D\smm A$. Then $K\subset\wh K$ and $\partial\wh K\subset K$.

Let $D_1$ be the component of $f^{-p}(D)$ that contains $K$. Then $D_1$ is also a Jordan domain. Since $f$ is sub-hyperbolic, it is expanding in a neighborhood of $\JJJ_f$ under a degenerate metric. Thus we may choose $D$ such that $D_1\Subset D$. Let $D_n$ be the component of $f^{-np}(D)$ that contains $K$ for $n\ge 1$. Then $\wh K=\cap_{n=1}^{\infty} D_n$ and hence $f^p(\wh K)=\wh K$.

If $\deg(f^p|_{D_n})=1$, then $\wh K$ is a single point by Schwarz Lemma. This is a contradiction.

If $\deg(f^p|_{D_n})>1$, then $f$ has a unique critical value $a\in D$ since $D$ contains at most one point of $\PPP_f$. Moreover $a\in\wh K$ since $A$ is disjoint from $\PPP_f$. Thus $f^p(a)=a$ and hence the point $a$ is also the unique critical point of $f$ in $D$. So $a$ is a super-attracting point. This contradicts the assumption that $f$ is generic.

If $K$ is A-type, then $K$ is disjoint from $\PPP_f$ and there are exactly two components of $\cbar\smm K$ containing points of $\PPP_f$. Thus there is an annulus $A\subset\cbar$ such that $K\subset A$ and $A$ is disjoint from $\PPP_f$.
As above, we may choose $A$ such that $A_1\Subset A$, where $A_1$ is the component of $f^{-p}(A)$ that contains $K$. Thus $K$ is also a Jordan curve by a folklore argument. So $K$ is a simple Julia component. This is a contradiction. In conclusion, $K$ is Q-type.
\end{proof}

\begin{corollary}{\label{cantor set}}
The Julia set of a degenerate and generic sub-hyperbolic rational map is a Cantor set.
\end{corollary}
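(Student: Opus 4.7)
The plan is to show that every Julia component of $f$ is a single point; together with the classical facts that $\JJJ_f$ is non-empty, compact, and has no isolated points for a rational map of degree at least $2$, this will yield that $\JJJ_f$ is a Cantor set.

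To that end I would argue by contradiction: suppose some Julia component $K$ has more than one point. Then every forward iterate $f^k(K)$ is still a Julia component with more than one point (by the identity principle, $f$ cannot be constant on the infinite set $K$). Depending on the dynamics of $K$, I would invoke Lemma \ref{AQ} as follows. If $K$ is wandering, part (1) gives that $f^k(K)$ is A-type for all large $k$. If $K$ is eventually periodic, then for $k$ large $f^k(K)$ is a non-single-point periodic Julia component, and parts (2)--(3) show that it is A-type (when it is a periodic Jordan curve disjoint from $\PPP_f$) or Q-type (in the complex case). In every scenario I obtain an iterate $K' := f^k(K)$ that is not D-type.

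Next I would exploit the generic and sub-hyperbolic hypothesis on $f$: every cycle in $\PPP'_f$ is attracting or super-attracting, hence $\PPP'_f \subset \FFF_f$. So I can pick an open neighborhood $U$ of $\PPP'_f$ with $U \subset \FFF_f$. Then $f^{-n}(U)$ stays in $\FFF_f$ and is disjoint from $\JJJ_f$, so $K' \subset \JJJ_f$ lies in some component $W_n$ of $\cbar \smm f^{-n}(U)$. The degenerate hypothesis, applied to this $U$, produces $N$ such that $W_n$ is D-type for every $n > N$. But a subset of a D-type set is D-type (use the same ambient simply-connected domain from the definition), so $K'$ is D-type, contradicting the previous paragraph.

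The main obstacle, such as it is, lies in the case analysis that yields the non-D-type iterate $K'$: one must handle periodic, strictly preperiodic, and wandering components uniformly and ensure that Lemma \ref{AQ} covers them all. Once $K'$ is produced, the contradiction with degeneracy is obtained simply by matching the definitions of degenerate and D-type and by using $\PPP'_f \subset \FFF_f$ to keep $f^{-n}(U)$ off of $\JJJ_f$.
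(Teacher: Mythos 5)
Your argument is correct and follows essentially the same route as the paper's own proof: the paper notes that, by the degeneracy hypothesis, every Julia component of $f$ is D-type and then invokes Lemma \ref{AQ} to conclude that each is a single point, which are precisely the two steps you carry out (in contrapositive form). The extra detail you supply—choosing $U\subset\FFF_f$ so that $K\subset\cbar\smm f^{-n}(U)$, and the case analysis with the identity principle to rule out a singleton forward iterate—simply makes explicit what the paper compresses into the phrase ``by definition.''
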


\begin{proof}
Let $f$ be a degenerate and generic sub-hyperbolic rational map. By definition, every Julia component of $f$ is D-type. So each Julia component of $f$ is a single point by the above lemma.
\end{proof}

\vskip 0.24cm
Q-type Julia components or Fatou domains are closely related to canonical decompositions. Let $f$ be a non-degenerate and generic sub-hyperbolic rational map. Let $(\UUU, \LLL)$ be a canonical decomposition of $f$ and $\G_f$ be a canonical multicurve of $f$ consisting of curves on $\partial\UUU$.

\begin{lemma}\label{Y2C}

(1) Each Q-type Fatou domain contains exactly one Q-type component of $\UUU$.

(2) Let $E$ be a component of $\cbar\smm\G_f$. Then exactly one of the following two possibilities occurs. Either $E$ contains exactly one Q-type Julia component, or $E$ contains exactly one Q-type component of $\UUU$.
\end{lemma}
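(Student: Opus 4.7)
The plan is to derive both parts of Lemma~\ref{Y2C} from the structural properties established in Theorem~\ref{decomposition}, using Proposition~\ref{canonical} to freely replace $\UUU$ by $\UUU_n=f^{-n}(\UUU)$ when convenient, together with the observation (made just before the definition of \emph{degenerate}) that $f$ preserves Q-type. A useful preliminary: every Q-type Fatou domain $D$ is simply connected (the sub-hyperbolic, generic assumption excludes Siegel disks, Herman rings and wandering domains) and must satisfy $|\PPP_f\cap D|\ge 2$, for otherwise $D$ itself would witness that $D$ is D-type.

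For existence in part~(1), let $D$ be a Q-type Fatou domain. Every $q\in\PPP_f\cap D$ satisfies $f^n(q)\to\PPP'_f\subset V_0$ under iteration, and the convergence is uniform on compact subsets of $D$ because $f$ is sub-hyperbolic. Consequently, for $n$ large, not only does each $q\in\PPP_f\cap D$ lie in $\UUU_n$, but any path $\gamma$ in $D$ joining two points of $\PPP_f\cap D$ satisfies $f^n(\gamma([0,1]))\subset V_0$, so $\gamma([0,1])\subset\UUU_n$. Thus all of $\PPP_f\cap D$ lies in a single component $C$ of $\UUU_n\cap D$. Since $C$ contains $\ge 2$ post-critical points, $C$ is not D-type, and since $C\cap\PPP_f\ne\emptyset$, $C$ is not A-type; hence $C$ is Q-type. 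Replacing $\UUU$ by $\UUU_N$ for large $N$ (legitimate by Proposition~\ref{canonical}) and then transferring back through the bijection of Theorem~\ref{decomposition}(d) yields a Q-type component of $\UUU$ inside $D$.

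For uniqueness in part~(1), I use Theorem~\ref{decomposition}(d) to observe that the containment map from Q-type components of $\UUU_n$ to Q-type components of $\UUU_{n+1}$ is a bijection, so the number of Q-type components of $\UUU_n$ inside $D$ is independent of $n$. For $n$ large the existence argument places all of $\PPP_f\cap D$ in one Q-type component of $\UUU_n\cap D$; any other component $C'$ of $\UUU_n\cap D$ is disjoint from $\PPP_f$. I then argue that such a $C'$, being an open subset of the simply connected $D$ disjoint from $\PPP_f$, must be D-type or A-type: one fills in each bounded complementary component of $C'$ in $D$ that is disjoint from $\PPP_f$, producing a simply connected or annular enlargement still disjoint from $\PPP_f$. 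This is the main obstacle of the proof, since the argument must rule out two or more bounded complementary components of $C'$ in $D$ that contain $\PPP_f$ points; I would exploit the iteration-invariance of the count together with the dynamics of $f^p|_D$ (where $p$ is the period of $D$ in the periodic case) to show that no such $C'$ can arise as a preimage component of a Koenigs or B\"{o}ttcher disk.

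For part~(2), I employ a counting argument. As $\G_f$ is a collection of pairwise disjoint Jordan curves in $S^2$, the complement $\cbar\smm\G_f$ has exactly $\#\G_f+1$ components. The proof of Theorem~\ref{decomposition} established $\#\G_f+1=s+r$, where $s$ and $r$ are the numbers of Q-type components of $\UUU$ and $\LLL$ respectively. By Theorem~\ref{decomposition}(c), the essential boundary curves of every Q-type component of $\UUU$ or $\LLL$ lie in $\G_f$, while crossing a non-essential boundary curve does not move to a different component of $\cbar\smm\G_f$. Hence each Q-type component of $\UUU$ or $\LLL$ is contained in exactly one component of $\cbar\smm\G_f$, and $s+r=\#\G_f+1$ forces this assignment to be a bijection, establishing that each component $E$ contains exactly one Q-type component of $\UUU$ or exactly one Q-type component of $\LLL$, but not both. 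Finally, to pass from a Q-type component $L$ of $\LLL$ with $\chi_f$-period $p$ to a Q-type Julia component inside $L$, I invoke Lemma~\ref{L}: the nested sequence $L\supset L^1\supset L^2\supset\cdots$, where $L^k$ is the unique periodic Q-type component of $f^{-pk}(\LLL)$ contained in $L$, intersects in a non-empty closed $f^p$-invariant subset of $\JJJ_f$, which is the desired Q-type Julia component of $L$.
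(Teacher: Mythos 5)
Your argument rests on a premise that is false in this setting: that a Q-type Fatou domain $D$ is simply connected. Excluding Siegel disks, Herman rings and wandering domains does not leave only simply connected Fatou domains; attracting and super-attracting basins of sub-hyperbolic rational maps can be (and, when $\JJJ_f$ is disconnected, typically are) infinitely connected. Indeed, the very examples in this paper (the Persian carpet map of Figure~7) have Q-type Fatou domains of infinite connectivity. This error is not cosmetic: your uniqueness step in part~(1) is explicitly built on the simply connectedness of $D$ (``$C'$, being an open subset of the simply connected $D$ disjoint from $\PPP_f$, must be D-type or A-type''). The related preliminary claim $|\PPP_f\cap D|\ge 2$ also fails; a Fatou domain disjoint from $\PPP_f$ can be Q-type if at least three of its complementary components carry post-critical points. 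The paper's proof of~(1) takes an entirely different route: it first passes to the periodic domain $f^k(D)$, chooses a carefully controlled $V\Subset f^k(D)$ whose complementary components separate the post-critical pieces of $\cbar\smm f^k(D)$, shows the intermediate annuli are disjoint from $\PPP_f$ so that their $f^{-k}$-preimages stay annuli, and thereby produces a Q-type $V'\Subset D$ outside of which every compact subset of $D$ is D- or A-type. That argument needs none of the topological simplification you assume, and is exactly what handles the multiply connected case. You yourself flag the uniqueness step as the ``main obstacle'' and only sketch a plan; as written there is a genuine gap.

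In part~(2), the counting observation $s+r=\#\G_f+1$ together with the fact that every Q-type component of $\UUU$ or $\LLL$ sits inside a single component of $\cbar\smm\G_f$ does not ``force'' the assignment to be a bijection. Equal cardinality of finite source and target does not by itself make a map bijective; you must establish injectivity (no $E$ receives two Q-type $\UUU$- or $\LLL$-components) or surjectivity. The paper does the work: it shows each $E$ contains at most one Q-type component of $\UUU$, at most one Q-type Julia component, and not one of each, using explicit separating curves on $\partial f^{-n}(\UUU)$ and comparing to $\G_f$; then it handles the ``no Q-type $\UUU$-component'' case by producing a Q-type Julia component. Finally, your extraction of a Q-type Julia component from a Q-type $\LLL$-component via ``the unique periodic Q-type component of $f^{-pk}(\LLL)$'' presumes that $L$ is $\chi_f$-periodic, which need not hold. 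The paper uses Theorem~\ref{decomposition}(e) directly: for \emph{any} Q-type $\LLL$-component $L$ (periodic or not) one has a nested chain $L\supset L_1\supset L_2\supset\cdots$ of unique Q-type components of $f^{-n}(\LLL)$, and the intersection $\bigcap_n L_n$ is a Q-type continuum disjoint from $\FFF_f=\bigcup_n f^{-n}(\UUU)$. The lemma also does not assert that the resulting Julia component is periodic, so there is no need to make the intersection $f^p$-invariant.
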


\begin{proof}
(1) Let $D$ be a Q-type Fatou domain. Let $k\ge 0$ be an integer such that $f^k(D)$ is periodic. Note that $f^k(D)$ has only finitely many complementary components containing points of $\PPP_f$. Denote their union by $E$. Then there exists a domain $V\Subset f^k(D)$ bounded by finitely many pairwise disjoint Jordan curves, such that $V\cap\PPP_f$=$f^k(D)\cap\PPP_f$ and each complementary component of $V$ contains at most one component of $E$. In other words, each component $\Omega$ of $f^k(D)\smm\overline{V}$ has at most one complementary component $E_1$ containing points of $\PPP_f$ except the complementary component $E_0$ that contains $V$. Consequently, $V$ is Q-type.

Now $B:=\cbar\smm (E_0\cup E_1)\supset\Omega$ is an annulus disjoint from $\PPP_f$. Thus each component of $f^{-n}(B)$ for $n\ge 1$ is also an annulus disjoint from $\PPP_f$. As a consequence, for each component $\Omega'$ of $f^{-k}(\Omega)$, $\Omega'$ has at most two complementary components containing points of $\PPP_f$ and $\partial\Omega'$ has exactly one component intersecting with $f^{-k}(\overline{V})$. Thus $f^{-k}(V)$ has a unique component $V'\Subset D$ and for any domain $K\Subset D\smm\overline{V'}$, $K$ is either D-type or A-type. Since $D$ is Q-type, $V'$ is also Q-type.

There exists an integer $n>0$ such that $V'\subset f^{-n}(\UUU)$. Let $U_n$ denote the component of $f^{-n}(\UUU)$ that contains $V'$. Then $U_n$ is Q-type,  $U_n\Subset D$ and for any domain $K\Subset D\smm\overline{U_n}$, $K$ is either D-type or A-type.

Let $U_0$ be the unique Q-type component of $\UUU$ with $U_0\subset U_n$. Then $U_0\Subset D$ and $D\smm U_0$ contains no other Q-type components of $\UUU$. Thus $D$ contains a unique Q-type component of $\UUU$.

(2) Let $E$ be a component of $\cbar\smm\G_f$. Then $E$ is Q-type since each curve in $\G_f$ is essential. We claim that it contains at most one Q-type Julia component or one component of $\UUU$.

Obviously, $E$ contains at most one Q-type component of $\UUU$. Assume by contradiction that $E$ contains two Q-type Julia components $K_1$ and $K_2$, then there exists an A-type or a Q-type Fatou domain $B\subset E$ separating $K_1$ from $K_2$.

Let $B_n$ be the union of components of $f^{-n}(\UUU)$ contained in $B$. Then $B_n\Subset B_{n+1}$ and $\bigcup_{n\ge 1} B_n=B$ since $\bigcup_{n\ge 1} f^{-n}(\UUU)=\FFF_f$. Thus there is an integer $n>0$ such that $B_n$ separates $K_1$ from $K_2$. Therefore there exists a curve $\beta\subset\partial f^{-n}(\UUU)$ which separates $K_1$ from $K_2$. The curve $\beta$ is not isotopic to any curve in $\G_f$ since both $K_1$ and $K_2$ are Q-type. This is a contradiction.

If $E$ contains a Q-type component $U$ of $\UUU$ and a Q-type Julia component $K$, then there exists a curve $\beta\subset\partial U$ such that $\beta$ is isotopic to a curve in $\Gamma_f$ and separates $U$ from $K$. This is also a contradiction. Now the claim is proved.

Assume now that $E$ contains no Q-type components of $\UUU$, then its closure must contain a Q-type component $L$ of $\LLL$. By Theorem \ref{decomposition} (e), $f^{-1}(\LLL)$ has exactly one Q-type component $L_1$ such that $L_1\subset L$. Inductively, $f^{-n}(\LLL)$ has exactly one Q-type component $L_n$ such that $L_n\subset L_{n-1}$ for $n\ge 2$. Set $K=\cap_{n=0}^{\infty} L_n$. Then $K$ is a Q-type continuum which is disjoint from $\bigcup_{n\ge 1}f^{-n}(\UUU)=\FFF_f$. Therefore $K\subset E$ is a Q-type Julia component.
\end{proof}

\section{The Shishikura tree map}
In this section, we will introduce the Shishikura tree map associated with the canonical multicurve of a semi-rational map, and show the relation between the transition matrix of the canonical multicurve and the Shishikura tree map. We will also discuss the dynamics of a tree map.

By a {\bf tree map} we mean a finite tree $T$ with the vertex set $X\subset T$ and a continuous map $\tau: T\to T$ such that $\tau^{-1}(X)\supset X$ is a finite set and $\tau$ is linear on components of $T\smm\tau^{-1}(X)$ under some linear metric on $T$.

%\vskip 0.24cm
Let $F$ be a non-degenerate and generic semi-rational map. Let $\G$ be a canonical multicurve of $F$. We want to construct a tree map associated with $\G$ such that it characterizes the configuration and the dynamics of the components of $\cbar\smm\G$.

%\vskip 0.24cm
The {\bf Shishikura tree} $(T_F, X_0)$ of $F$ is the dual tree of $\G$ defined as the following. There exists a bijection $v$ from the collection of components of $\cbar\smm\G$ to the vertex set $X_0$. For two distinct components $E_1, E_2$ of $\cbar\smm\G$, the vertices $v(E_1)$ and $v(E_2)$ are connected by an edge of $(T_F, X_0)$ if $E_1$ and $E_2$ have a common boundary component, which is a curve in $\G$. Thus there is a bijection $e$ from $\G$ to the collection of edges of $T_F$.

By the definition, the bijection $v$ is {\bf order-preserving}, i.e., for distinct components $E_0, E_1$ and $E_2$ of $\cbar\smm\G$, $E_0$ separates $E_1$ from $E_2$ if and only if $v(E_0)$ separates $v(E_1)$ from $v(E_2)$ in $T_F$.

Let $\G_1$ be the collection of essential curves in $F^{-1}(\G)$. Each component $E_1$ of $\cbar\smm\G_1$ is either A-type or Q-type. In the latter case, $E_1$ is {\bf isotopic} to a component $E$ of $\cbar\smm\G$, i.e., there exists a homeomorphism $\theta:\cbar\to\cbar$ isotopic to the identity rel $\PPP_F$ such that $\theta(E_1)=E$.

There exists an injection $v_1$ from the collection of components of $\cbar\smm\G_1$ into $T_F$ such that it satisfies the following conditions:

(1) $v_1(E_1)=v(E)$ if $E_1$ is Q-type, where $E$ is the component of $\cbar\smm\G$ isotopic to $E_1$.

(2) $v_1(E_1)\in e(\g)$ if $E_1$ is A-type, where $\g\in\G$ is isotopic to a curve on $\partial E_1$.

(3) $v_1$ is order-preserving, i.e. for distinct components $E_0, E_1$ and $E_2$ of $\cbar\smm\G_1$, $E_0$ separates $E_1$ from $E_2$ if and only if $v_1(E_0)$ separates $v_1(E_1)$ from $v_1(E_2)$ in $T_F$.

Denote by $X_1$ the image of $\cbar\smm\G_1$ under $v_1$. Then $X_1\supset X_0$ and there exists a bijection $e_1$ from $\G_1$ to the collection of edges of $(T_F, X_1)$ such that $e_1(\beta)\subset e(\g)$ if $\beta\in\G_1$ is isotopic to $\g\in\G$.

Each component $E_1$ of $\cbar\smm\G_1$ is cut by $F^{-1}(\G)$ into finitely many domains, there is exactly one of them, denoted by $\check E_1$, is not D-type. Define a map
$$
\tau_F:\  X_1\to X_0\quad\text{by}\quad\tau_F(v_1(E_1))=v(F(\check E_1)).
$$
If two points $v_1(E_1)$ and $v_1(E_2)$ in $X_1$ are connected by an edge in $(T_F, X_1)$, then $E_1$ and $E_2$ have a common boundary curve
$\beta\in\G_1$. So do $\check E_1$ and $\check E_2$. Thus $F(\check E_1)$ and $F(\check E_2)$ have a common boundary curve $F(\beta)\in\G$. So $\tau_F(v_1(E_1))$ and $\tau_F(v_2(E_2))$ are connected by an edge in $(T_F, X_0)$. Therefore, we can extend the map $\tau_F$ to a continuous map $\tau_F: T_F\to T_F$ such that $\tau_F:\, e_1(\beta)\to e(F(\beta))$. Moreover, we may equip a linear metric on $T_F$ such that $\tau_F$ is linear on each edge of $(T_F, X_1)$. The tree map
$$
\tau_F:\,(T_F, X_1)\to (T_F,X_0)
$$
will be called the {\bf Shishikura tree map} of $F$.

%\vskip 0.24cm
In the following, we will show that the transition matrix of the multicurve $\G$ can be expressed by the Shishikura tree map together with the degrees of $F$ on curves in $\G_1$.

By a {\bf weight} of a tree we mean a positive function defined on the collection of edges of the tree.

Let $T$ be a finite tree with vertex set $X\subset T$ and $\tau:\ T\to T$ be a tree map. Note that $\tau^{-1}(X)\supset X$ and $(T, \tau^{-1}(X))$ is a new tree with the vertex set $\tau^{-1}(X)$. Let $w$ be a weight on $(T, \tau^{-1}(X))$. Denote by $\{I_1, \cdots, I_n\}$ the edges of $(T, X)$. The transition matrix $M(\tau, w)=(b_{ij})$ of $\tau$ with respect to the weight $w$ is defined by
$$
b_{ij}=\sum_{J}\frac 1{w(J)},
$$
where the sum is taken over all the edges $J$ of $(T, \tau^{-1}(X))$ such that $J\subset I_{i}$ and $\tau(J)=I_{j}$. From Lemma \ref{eigenvalue}, we have

\begin{lemma}\label{tree-eigen} The leading eigenvalue satisfies $\lambda(M(\tau, w))<1$ if and only if $\tau$ is contracting with respect to the weight $w$, i.e., there exists a linear metric $\rho$ on $T$ such that for each edge $I$ of $(T,X)$,
$$
\sum_{J}\frac{|\tau(J)|}{w(J)}<|I|,
$$
where the sum is taken over all the edges $J$ of $(T, \tau^{-1}(X))$ with $J\subset I$ and $|\cdot|$ denotes the length of edges under the metric $\rho$.
\end{lemma}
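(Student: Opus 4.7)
The plan is to recognize that Lemma \ref{tree-eigen} is essentially a direct translation of Lemma \ref{eigenvalue} into the language of linear metrics on the tree $T$. A linear metric $\rho$ on $T$ is determined by its values on the edges of $(T,X)$: to specify such a metric is to specify, for each edge $I_i$ of $(T,X)$, a positive length $v_i:=|I_i|$, i.e.\ to specify a strictly positive vector $v=(v_1,\dots,v_n)$. Conversely any positive vector determines such a metric. Since $\tau$ is linear on each component of $T\smm\tau^{-1}(X)$, every edge $J$ of $(T,\tau^{-1}(X))$ is mapped linearly onto an edge $I_j$ of $(T,X)$, so $|\tau(J)|=v_j$.

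With this identification, for each edge $I_i$ of $(T,X)$,
$$
\sum_{J\subset I_i}\frac{|\tau(J)|}{w(J)}
=\sum_{j}\Bigl(\sum_{J\subset I_i,\,\tau(J)=I_j}\frac{1}{w(J)}\Bigr)v_j
=\bigl(M(\tau,w)\,v\bigr)_i.
$$
Hence the existence of a linear metric making $\tau$ contracting is exactly the existence of a strictly positive vector $v$ with $M(\tau,w)v<v$ componentwise. The lemma therefore reduces to the equivalence
$$
\lambda(M(\tau,w))<1 \iff \exists\,v>0\ \text{with}\ M(\tau,w)v<v,
$$
which I would deduce from Lemma \ref{eigenvalue}.

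For the forward direction, $\lambda(M(\tau,w))<1$ together with Lemma \ref{eigenvalue} applied with $\lambda=1$ (or any intermediate value) yields $v>0$ with $M(\tau,w)v<v$, and declaring $|I_i|:=v_i$ provides the desired metric. For the reverse direction, assume a contracting metric exists and set $v_i=|I_i|$. The finite-dimensional maximum $\lambda':=\max_i(M(\tau,w)v)_i/v_i$ is strictly less than $1$, so for any $\lambda''\in(\lambda',1)$ we have $M(\tau,w)v<\lambda'' v$, whence Lemma \ref{eigenvalue} gives $\lambda(M(\tau,w))\le\lambda''<1$.

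The only technical point is the mild mismatch between the non-strict inequality $M(\tau,w)v\le\lambda' v$ that the finite maximum naturally produces and the strict inequality required by the formulation of Lemma \ref{eigenvalue}; this is handled by the slight relaxation to $\lambda''\in(\lambda',1)$ noted above, and there is no other substantive obstacle.
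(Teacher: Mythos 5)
Your proof is correct and follows exactly the route the paper has in mind: the paper gives no argument beyond "From Lemma \ref{eigenvalue}, we have," and your translation of linear metrics to strictly positive vectors, yielding $\sum_{J\subset I_i}|\tau(J)|/w(J)=(M(\tau,w)v)_i$ and then the equivalence $\lambda(M)<1\iff\exists v>0,\ Mv<v$, is precisely the intended computation. The small care you take with strict versus non-strict inequality in the reverse direction is the right way to match the formulation of Lemma \ref{eigenvalue}.
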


Let $\tau_F:\,(T_F, X_1)\to (T_F,X_0)$ be the Shishikura tree map of $F$. Define the weight for edges $J=e_1(\delta)$ of $(T, X_1)$ by $w_F(J)=\deg(F|_\delta)$. Then the transition matrix $M(\tau_F, w_F)$ is just the transition matrix of the canonical multicurve $\G$.

%\vskip 0.24cm
Now we consider the dynamics of a tree map $\tau:\, T\to T$. Let $X_0\subset T$ be the vertex set. Denote $X=\bigcup_{n\ge 0}\tau^{-n}(X_0)$.

\begin{lemma}\label{interval}
Suppose that $T\smm\overline{X}\ne\emptyset$. Then for every component $J$ of $T\smm\overline{X}$, there exists an integer $N\ge 0$ such that $\tau^n(J)$ is an edge of $(T,X_0)$ for all $n\ge N$.
\end{lemma}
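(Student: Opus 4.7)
My strategy is to show that $\tau$ acts on the components of $T\setminus\overline{X}$ as a surjective self-map whose forward orbits eventually land in the finite collection of full-edge components of $(T,X_0)$; I break this into three main stages and then flag the central obstacle.

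First I would establish the complete invariance $\tau^{-1}(\overline{X}) = \overline{X}$, and use it to show that for each component $J=(a,b)$ of $T\setminus\overline{X}$, $\tau|_J$ is a linear homeomorphism onto a unique component $J^{(1)}$. The forward inclusion $\tau(\overline{X})\subset\overline{X}$ follows from $\tau(X_0)\subset X_0$ and continuity. For the reverse inclusion, $\tau$ is a linear homeomorphism on each component of $T\setminus\tau^{-1}(X_0)$, so if $\tau(y)$ is a limit of $x_k\in X$, then (in the nontrivial case $y\notin\tau^{-1}(X_0)\subset\overline{X}$) one locally lifts the $x_k$ to $y_k\in\tau^{-1}(X)\subset X$ converging to $y$, giving $y\in\overline{X}$. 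Because $X_0\cup\tau^{-1}(X_0)\subset\overline{X}$, $J$ lies inside a single linear piece of $\tau$ and inside a single edge of $(T,X_0)$; hence $\tau(J)=(\tau(a),\tau(b))$ is connected in $T\setminus\overline{X}$, its two distinct endpoints in $\overline{X}$ must be the two boundary points of $J^{(1)}$, and so $\tau(J)=J^{(1)}$. By iteration $\tau^n(J)=J^{(n)}$ is always a component.

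Second, I would show $(J^{(n)})$ eventually enters the finite set $\mathcal{E}$ of ``full-edge'' components, those equal to the interior of an edge of $(T,X_0)$. Since $\tau(X_0)\subset X_0$ preserves vertex endpoints, $\mathcal{E}$ is forward-invariant under the induced action, so it suffices to get into $\mathcal{E}$ in finitely many steps. The key reduction is: if both endpoints $a,b$ of $J$ lie in $X = \bigcup_n\tau^{-n}(X_0)$, say $a\in\tau^{-k_a}(X_0)$ and $b\in\tau^{-k_b}(X_0)$, then $\tau^n(a),\tau^n(b)\in X_0$ for $n\ge\max(k_a,k_b)$; since $J^{(n)}$ sits inside a single edge and has two $X_0$-endpoints, it must coincide with the interior of that edge.

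The main obstacle is therefore to exclude the possibility that an endpoint $a$ of $J$ belongs to $\overline{X}\setminus X$. I would argue by contradiction. From $\tau(y)\in X\Rightarrow y\in X$ (immediate from $X = \bigcup_n\tau^{-n}(X_0)$) the forward orbit of such an $a$ stays entirely in $\overline{X}\setminus X$; combined with the fact that $\tau(X_0)\subset X_0$ forces every $\tau$-periodic point lying in $X$ to lie in $X_0$, the orbit's accumulation points lie in $\overline{X}\setminus X_0$. A local slope analysis at a $\tau$-periodic accumulation point $z$ then gives the contradiction: the return-map derivative $\alpha$ at $z$ cannot satisfy $\alpha<1$ (an attracting $z\notin X_0$ cannot lie in $\overline{X}$ because backward iterates of $X_0$ move away), and the cases $\alpha\ge 1$ are excluded by a finer analysis using the complete invariance of $\overline{X}$ together with the finiteness of the slope alphabet on components of $T\setminus\tau^{-1}(X_0)$. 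With the endpoints-in-$X$ property in hand, the previous steps combine to yield the lemma with $N=\max(k_a,k_b)$.
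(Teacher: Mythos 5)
Your first two stages are sound: $\tau^{-1}(\overline{X})=\overline{X}$ does follow from the lifting argument (valid since $y\notin\tau^{-1}(X_0)$ places $y$ in the interior of a linear piece, where $\tau$ is a local homeomorphism), so $\tau(J)$ is again a component of $T\setminus\overline{X}$; and if both endpoints of $J$ lie in $X=\bigcup_n\tau^{-n}(X_0)$, they eventually land in $X_0$, forcing $\tau^n(J)$ to be the interior of an edge. But the third stage --- ruling out an endpoint $a\in\overline{X}\setminus X$ --- is exactly where the content of the lemma lives, and you have not supplied it. What you would need is a one-sided version of Corollary \ref{2sides}, which the paper derives \emph{from} Lemma \ref{interval}, so there is a real danger of circularity. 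Your sketch (pass to a $\tau$-periodic accumulation point of the forward orbit of $a$, classify the return-map multiplier $\alpha$, exclude $\alpha<1$ and $\alpha\ge1$ ``by a finer analysis'') is a programme, not a proof: the phrase ``finer analysis using the complete invariance of $\overline{X}$ together with the finiteness of the slope alphabet'' is precisely the part that would carry the burden, and in particular you would have to exclude a periodic component of $T\setminus\overline{X}$ mapped onto itself with slope $\pm1$ while being adjacent to $\overline{X}$ on one side only, which is not obviously possible without already knowing the lemma.

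The paper avoids the endpoint dichotomy entirely by arguing on lengths. Since $(T,X_0)$ has finitely many edges and the sets $\tau^{-n}(X_0)$ increase, there is an $m$ such that every edge $I$ whose interior meets $X$ already contains an interior point of $\tau^{-m}(X_0)$; hence there is a uniform $\lambda<1$ with every component of $I\setminus\tau^{-m}(X_0)$ of length $<\lambda|I|$. If $\tau^n(J)$ were never a full edge, one pulls this contraction back along the orbit of $J$: with $I^0$ the edge containing $J$ and $I^k$ the component of $I^{k-1}\setminus\tau^{-km}(X_0)$ containing $J$, the fact that $\tau^{km}(I^k)$ is again an edge meeting $X$ (using forward invariance of $\overline{X}$ and the standing assumption) gives $|I^k|<\lambda^k|I^0|\to 0$, contradicting $|J|>0$. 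This uses only finiteness of the edge set and forward invariance, and never needs to decide whether the endpoints of $J$ lie in $X$ or in $\overline{X}\setminus X$. You should replace your third stage with such a contraction argument; the slope analysis you propose would have to be made fully rigorous, and I expect that to be substantially harder than the direct route.
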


\begin{proof}
Let $\III$ denote the collection of edges of $(T, X_0)$ which contain points of $X$. Then there exists an integer $m>0$ such that each edge in $\III$ contains points of $\tau^{-m}(X_0)$. So there is a constant $0<\lambda<1$ such that for each edge $I\in\III$, the length of every interval of $I\smm \tau^{-m}(X_0)$ is less than $\lambda |I|$.

Let $J$ be a component of $T\smm\overline{X}$. Assume by contradiction that $\tau^n(J)$ is not an edge of $(T,X_0)$ for all $n\ge 0$. Then $J$ is contained in an edge $I^0\in\III$. Let $I^1$ be the component of $I^0\setminus\tau^{-m}(X_0)$ that contains $J$. Then $|I^1|<\lambda|I^0|$.

Now $\tau^m(I^1)$ is an edge of $(T, X_0)$, which is also contained in $\III$ by the assumption. So $I^1$ contains points of $\tau^{-2m}(X_0)$.
Let $I^2$ be the component of $I^1\setminus\tau^{-2m}(X_0)$ that contains $J$. Then $|I^2|<\lambda|I^1|$.

Inductively, we obtain an infinite sequence of intervals $\{I^k\}$ with $I^k\supset I^{k+1}\supset J$ and $|I^k|<\lambda |I^{k-1}|$ for $k\ge 1$.
Thus $|J|<\lambda^k|I^0|\to 0$ as $k\to\infty$. This is a contradiction.
\end{proof}

\begin{corollary}\label{2sides}
Each point $x\in\overline{X}\smm X$ is a double-sides accumulation point, i.e., let $I$ be the edge of $(T, X_0)$ that contains the point $x$, then both of the two components of $I\smm\{x\}$ contain a sequence of points in $X$ which converges to the point $x$.
\end{corollary}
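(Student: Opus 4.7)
The plan is to argue by contradiction. Suppose $x\in\overline{X}\smm X$ and, choosing a local parametrisation of the edge $I$ of $(T,X_0)$ containing $x$, assume that $(x,x+\ep)\cap X=\emptyset$ for some $\ep>0$. First I would upgrade this to $(x,x+\ep)\cap\overline{X}=\emptyset$: any $z\in(x,x+\ep)\cap\overline{X}$ would be interior to $I$, so every $X$-sequence converging to $z$ would eventually sit inside $(x,x+\ep)$, contradicting $(x,x+\ep)\cap X=\emptyset$. Consequently $(x,x+\ep)$ is contained in a single component $J$ of $T\smm\overline{X}$, and $x$ is one of the two endpoints of $J$.

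Next I would apply Lemma \ref{interval} to select $N\ge 1$ such that $\tau^N(J)$ is a full (open) edge $I^*$ of $(T,X_0)$. Since $J\cap X=\emptyset$ and $\tau^{-k}(X_0)\subset X$ for every $k\ge 0$, the interval $J$ is disjoint from $\tau^{-k}(X_0)$ for each $k$, so by induction on $k$ and the defining piecewise-linearity of a tree map, $\tau^k|_J$ is linear for every $k\ge 1$. Thus $\tau^N|_J$ is a linear homeomorphism of $J$ onto $I^*$; passing to the continuous extension on $\overline{J}$, the endpoint $x$ of $\overline{J}$ must be sent to an endpoint of $\overline{I^*}$, which is a vertex lying in $X_0$. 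Hence $\tau^N(x)\in X_0\subset X$.

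To derive the contradiction I would establish forward invariance $\tau(\overline{X}\smm X)\subset\overline{X}\smm X$. The inclusion $\tau(X)\subset X$ follows from the definition $X=\bigcup_{n\ge 0}\tau^{-n}(X_0)$ together with $\tau(X_0)\subset X_0$, and continuity of $\tau$ gives $\tau(\overline{X})\subset\overline{X}$. If $y\in\overline{X}\smm X$ satisfied $\tau(y)\in X$, then $\tau^{m+1}(y)\in X_0$ for some $m\ge 0$, forcing $y\in\tau^{-(m+1)}(X_0)\subset X$, a contradiction. Iterating this invariance $N$ times yields $\tau^N(x)\in\overline{X}\smm X$, which contradicts $\tau^N(x)\in X$ from the previous paragraph. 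Therefore the one-sided hypothesis is impossible, and both components of $I\smm\{x\}$ contain $X$-sequences converging to $x$.

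The main difficulty is the second paragraph: one has to simultaneously extract from Lemma \ref{interval} the global fact that $\tau^N$ sends $J$ onto an entire edge of the base tree \emph{and} the local fact that $\tau^N$ is genuinely linear all the way up to the endpoint $x$ of $J$, in order to pin the image of $x$ down to a vertex of $X_0$. The rest of the argument is routine bookkeeping with the backward orbit set $X$.
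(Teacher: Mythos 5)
Your argument is correct and is essentially the intended derivation of the corollary from Lemma~\ref{interval} (the paper states the corollary without proof). All the key steps check out: the upgrade from $(x,x+\ep)\cap X=\emptyset$ to $(x,x+\ep)\cap\overline{X}=\emptyset$, the identification of $x$ as an endpoint of a component $J$ of $T\smm\overline{X}$, the inductive linearity of $\tau^k|_J$ (since $J$ avoids every $\tau^{-k}(X_0)$), and the conclusion that $\tau^N(x)$ lands on a vertex in $X_0$. The only remark is that the final step can be shortened: once you know $\tau^N(x)\in X_0$, you immediately get $x\in\tau^{-N}(X_0)\subset X$, which contradicts $x\notin X$ directly, without needing to establish forward invariance of $\overline{X}\smm X$; but your detour through invariance is also correct.
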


Let $x\in T\smm X_0$ be a periodic point with period $p\ge 1$. Then either $x\in T\smm\overline{X}$ or $x\in\overline{X}\smm X$. In the former case,
let $I$ be the edge of $T$ that contains the point $x$, then $\tau^p(I)=I$ by the above lemma. So $|(\tau^p)'(x)|=1$ on $I$ since $\tau$ is a linear map. In the latter case, $|(\tau^p)'(x)|>1$ and hence $x$ is a repelling periodic point.

\vspace{2mm}
\noindent
{\bf Remark.} The general converse problem, that is, given a Shishikura tree map, under what conditions, one could construct a rational map to realize such a tree map, is not involved in this paper. The realization problem was discussed in \cite{Shi3}, and moreover, a lower bound on the degree of rational maps was also given there.

\section{Jordan curves as components of Julia sets}
In this section, we will describe the dynamics on the configuration of Julia components by means of the Shishikura tree map.

Let $f$ be a non-degenerate and generic sub-hyperbolic rational map. A Julia component of $f$ is called {\bf buried} if it is disjoint from the closure of any Fatou domain of $f$. Obviously, a Julia component $K$ is buried if and only if $f(K)$ is also buried. Denote

\vskip 0.24cm
$\sA=\{\text{A-type buried Julia components of $f$ which are Jordan curves}\}$,

$\sC_0=\{\text{Q-type components of $\JJJ_f$ or $\FFF_f$}\}$,

$\sC_n=\{\text{A-type or Q-type components $K$ of $\JJJ_f$ or $\FFF_f$ such that $f^n(K)$ is Q-type}\}$,

$\sC=\bigcup_{n\ge 0}\sC_n$.

\vskip 0.24cm
Then $\sA$ is disjoint from $\sC$ and $f(K)\in\sA$ for $K\in\sA$,  $\sC_{n}\supset\sC_{n-1}$ and $f(K)\in\sC_{n-1}$ for $K\in\sC_n$ and $n\ge 1$.

\begin{theorem}\label{tree}
Let $\Gamma$ be a canonical multicurve of $f$ and $\tau:\,(T, X_1)\to (T, X_0)$ be the Shishikura tree map associated with $\Gamma$. Denote $X=\bigcup_{n\ge 0}\tau^{-n}(X_0)$. Then there exists an order-preserving injection $\pi:\, \sC\cup\sA\to T$ such that $\pi(\sC)=X$, $\pi(\sA)=\overline{X}\smm X$ and the following diagram commutes.
\[
\xymatrix{
\sA\cup\sC\ar[d]_{\pi}\ar[r]^{f} & \sA\cup\sC\ar[d]_{\pi} \\
\overline{X}\ar[r]^{\tau} & \overline{X}}
\]
The injection $\pi$ is order-preserving means that for distinct components $K_0, K_1$ and $K_2$ in $\sA\cup\sC$, $K_0$ separates $K_1$ from $K_2$ if and only if $\pi(K_0)$ separates $\pi(K_1)$ from $\pi(K_2)$ in $T$. In particular, the injection $\pi$ is a one-to-one correspondence from the set of periodic Jordan curves as buried Julia components of $f$ to the set of repelling periodic points of $\tau$ in $T\smm X_0$.
\end{theorem}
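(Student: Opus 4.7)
On $\sC_0$, I would invoke Lemma~\ref{Y2C}: for a Q-type Julia component $K$ let $E\subset\cbar\smm\G$ be the component containing $K$; for a Q-type Fatou domain $K$ let $U\subset K$ be the unique Q-type component of $\UUU$ it contains, and let $E$ be the component of $\cbar\smm\G$ containing $U$. Setting $\pi(K):=v(E)$ yields a bijection $\sC_0\leftrightarrow X_0$. Equivariance $\pi\circ f=\tau\circ\pi$ on $\sC_0$ comes from unwinding the definition $\tau(v(E))=v(f(\check E_1))$: the Q-type component $E_1$ of $\cbar\smm\G_1$ isotopic to $E$ carries the same Q-type marker as $E$, so $K\subset\check E_1$ (or the $f^{-1}(\UUU)$-piece inside $K$ lies in $\check E_1$), and $f(\check E_1)$ is the component of $\cbar\smm\G$ containing the marker of $f(K)$.

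To extend $\pi$ over $\sC_n\smm\sC_{n-1}$ for $n\ge 1$, I would use that $\G$ remains a canonical multicurve for the pulled-back decomposition $(f^{-n}(\UUU),f^{-n}(\LLL))$ (cf.\ the remarks after Proposition~\ref{canonical}). Running the Shishikura construction for $f^n$ yields an order-preserving injection $v_n$ from the components of $\cbar\smm\G_n$ into $X_n:=\tau^{-n}(X_0)$, sending Q-type components to $X_0$ and A-type components bijectively onto $X_n\smm X_0$. For $K\in\sC_n\smm\sC_{n-1}$ (automatically A-type, as Q-type is preserved under $f$), I would show that $K$ determines a unique A-type component $E_n^K$ of $\cbar\smm\G_n$ which ``first appears'' at level $n$, so that $v_n(E_n^K)\in X_n\smm X_{n-1}$, and set $\pi(K):=v_n(E_n^K)$. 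Equivariance, injectivity, and order-preservation then propagate from those of $v_n$ together with the fact that $f$ sends $E_n^K$ into the A-type or Q-type component of $\cbar\smm\G_{n-1}$ carrying $f(K)$.

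For $K\in\sA$—an A-type buried Jordan-curve Julia component—buriedness forces arbitrarily thin annular neighborhoods of $K$ disjoint from $\PPP_f$ to meet $\bigcup_n\UUU_n=\FFF_f$ on both sides of $K$ (using that $\FFF_f$ is dense in $\cbar$ for our $f$). Consequently the two sequences of A-type components of $\cbar\smm\G_n$ separating $K$ from the nearest Q-type markers on each side collapse, via the $v_n$, to a single two-sided accumulation point of $X$ in $T$ (cf.\ Corollary~\ref{2sides}); I take this point to be $\pi(K)$, which lies automatically in $\overline{X}\smm X$. Equivariance on $\sA$ extends by continuity from equivariance on $\sC$, and injectivity on $\sA$ follows because distinct buried Jordan curves are separated by non-trivial Julia or Fatou structure that distinguishes their collapse points in $T$.

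Finally, for the periodic correspondence, any periodic point of $\tau$ lying in $X$ must in fact lie in $X_0$ (since $\tau(X_0)\subset X_0$ and if $\tau^k(p)\in X_0$ for some $k$, then for $m$ with $mq\ge k$ we have $p=\tau^{mq}(p)=\tau^{mq-k}(\tau^k(p))\in X_0$), so every periodic point of $\tau$ in $T\smm X_0$ lies in $T\smm X=(T\smm\overline{X})\cup(\overline{X}\smm X)$. By the remark after Corollary~\ref{2sides}, periodic points in $T\smm\overline{X}$ have multiplier $1$ and are not repelling, while those in $\overline{X}\smm X$ are repelling; thus repelling periodic points of $\tau$ in $T\smm X_0$ coincide with the periodic points in $\overline{X}\smm X=\pi(\sA)$, and by equivariance and injectivity of $\pi$ they are in bijection with periodic elements of $\sA$, i.e.\ periodic Jordan-curve buried Julia components of $f$. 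The main difficulty I expect lies in the inductive step for $\sC_n\smm\sC_{n-1}$: canonically selecting $E_n^K$ and verifying the bijection with the A-type components ``first appearing'' at level $n$, which will require careful bookkeeping with Theorem~\ref{decomposition}(d)(e) and Lemma~\ref{L} to control how Q-type and A-type pieces split under pullback.
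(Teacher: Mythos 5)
Your construction of $\pi$ on $\sC$ follows the paper's line: on $\sC_0$ via Lemma~\ref{Y2C}, then extended over $\sC_n$; the paper takes the slightly slicker route of building $v_1$ explicitly and then pushing the extension inductively using injectivity of $\tau$ on edges of $(T,X_1)$, whereas you appeal to a fresh construction $v_n$ for each $n$, but the two give the same map and the bookkeeping you worry about (``first appearing at level $n$'') is genuinely not a problem.

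The real gap is elsewhere. You define $\pi$ on $\sA$ and observe that each $\pi(K)$ lands in $\overline{X}\smm X$, but you never prove the opposite inclusion $\overline{X}\smm X\subset\pi(\sA)$; you then silently use the equality $\overline{X}\smm X=\pi(\sA)$ in your final paragraph when passing from repelling periodic points of $\tau$ to periodic buried Jordan curves of $f$. This surjectivity is precisely the hard content of the theorem, and it is not a continuity or density formality. The paper's argument for it is: given $x\in\overline{X}\smm X$, use Corollary~\ref{2sides} to find shrinking intervals $[a_n,b_n]\ni x$ with endpoints in $X$, pull back to nested annuli $C_n$ bounded by the corresponding Julia or Fatou components, and set $K=\bigcap C_n$. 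Then one must show (i) $\partial K$ is contained in the Julia set and is disjoint from the grand orbit of every complex periodic Julia component, so by Theorem~A each component of $\partial K$ is a Jordan curve; (ii) $\partial K$ cannot have two components, because otherwise an A-type Fatou domain separating them would give a vertex inside every $[a_n,b_n]$, contradicting $|[a_n,b_n]|\to 0$; and (iii) $K$ is buried, which uses the equivariance applied along the forward orbit of $x$ to rule out $K$ touching the closure of any Fatou domain. None of these steps appear in your proposal, and without them the ``one-to-one correspondence'' conclusion — in particular that every repelling periodic point of $\tau$ in $T\smm X_0$ actually \emph{comes from} a buried Jordan curve — is not established. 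So the difficulty you flag (the $\sC_n$ induction) is not the bottleneck; the bottleneck is producing the Jordan curve for each point of $\overline{X}\smm X$.
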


\begin{proof}
By Lemma \ref{Y2C}, for each $K\in\sC_0$, there exists a unique component $E$ of $\cbar\smm\G$ such that $K\subset E$ if $K$ is a Julia component, or both $E$ and $K$ contain a common Q-type component $U$ of $\UUU$ if $K$ is a Fatou domain. Thus we have an injection
$$
\pi: \sC_0\to T, \quad \pi(K)=v(E)
$$
such that $\pi(\sC_0)=X_0$. It is order-preserving since $v$ is order-preserving.

Let $\Gamma_1$ be the collection of essential curves in $f^{-1}(\Gamma)$. For each $K\in\sC_1$, there exists a unique component $E_1$ of $\cbar\smm\G_1$ such that $K\subset E_1$ when $K$ is a component of $\JJJ_f$, or both $E_1$ and $K$ contain a common Q-type or A-type component $U$ of $\UUU$. Thus $\pi$ can be extended to
$$
\pi: \sC_1\to T, \quad \pi(K)=v_1(E_1)
$$
such that $\pi(\sC_1)=X_1$ and $\tau(\pi(K))=\pi(f(K))$ for $K\in \sC_1$. The injection $\pi$ is still order-preserving since $v_1$ is order-preserving.

Since $\tau$ is injective on each edge of $(T, X_1)$, for any $n\ge 2$, $\pi$ can be extended to an order-preserving injection $\pi: \sC_n\to T$ such that $\pi(\sC_n)=\tau^{-n}(X_0)$ and the following diagram commutes.
\[
\xymatrix{
\sC_n\ar[d]_{\pi}\ar[r]^{f} & \sC_{n-1}\ar[d]_{\pi}\ar[r] & \cdots\ar[r] & \sC_1\ar[d]_{\pi}\ar[r]^{f} & \sC_0\ar[d]^{\pi} \\
\tau^{-n}(X_0)\ar[r]^{\tau} & \tau^{-n+1}(X_0)\ \ar[r] & \cdots \ar[r] &
\tau^{-1}(X_0)\ar[r]^{\tau} & X_0 }
\]
In conclusion, we obtain an order-preserving injection $\pi: \sC\to T$ such that $\pi(\sC)=X$ and the following diagram commutes.
\[
\xymatrix{
\sC\ar[d]_{\pi}\ar[r]^{f} & \sC\ar[d]_{\pi} \\
{X}\ar[r]^{\tau} & {X}}
\]

For each $K\in\sA$, $K$ is Jordan curve as a buried Julia component. So there exists an infinite sequence of pairs of annular Fatou domains $(U_n, V_n)$ such that $K\subset A_{n+1}\subset A_n$ and $A_n$ is disjoint from all the elements in $\sC_n$, where $A_n$ is the unique annular component of $\cbar\smm(\overline{U_n}\cup\overline{V_n})$.  It concludes that both $\pi(U_n)$ and $\pi(V_n)$ converge to the same point $x\in\overline{X}\smm X$. Define $\pi(K)=x$. Then we extend $\pi$ from $\sC$ to $\sC\cup\sA$, which is still an order-preserving injection.

For each point $x\in\overline{X}\smm X$. Let $I_0$ be the edge of $(T, X_0)$ that contains the point $x$. By Corollary \ref{2sides}, $x$ is a double-sides accumulation point. Thus there exists an infinite sequence of intervals $\{I_n=[a_n, b_n]\}$ with $x\in I_n\Subset I_{n-1}$, such that both $a_n$ and $b_n$ are contained in $X$ and $|I_n|\to 0$ as $n\to\infty$.

Let $A_n=\pi^{-1}(a_n)$ and $B_n=\pi^{-1}(b_n)$ be the corresponding components of $\JJJ_f$ or $\FFF_f$. Let $C_n$ be the unique annular component of $\cbar\smm(\overline{A_n}\cup\overline{B_n})$. Then $C_{n+2}\Subset C_n$ and $C_{n+2}$ separates the two complementary components of $C_n$ for all $n\ge 0$.
Thus $K=\bigcap_{n\ge 0}C_n$ is a continuum which has exactly two complementary components. Thus $\partial K$ has at most two components. On the other hand, $\partial K\subset\JJJ_f$ and is disjoint from the grand orbit of all the complex periodic Julia components. Thus each component of $\partial K$ is contained in a wandering Julia component or an eventually simple periodic Julia component. In both cases each component of $\partial K$ must be a Jordan curve.

If $\partial K$ has two components, then there exists an A-type Fatou domain $U$ which separates one of them from another. Thus $\pi(U)\in I_n$ for all $n\ge 0$. This is a contradiction since $|I_n|\to 0$. Thus $\partial K$ is a Jordan curve and so is $K$.

Note that $K$ is disjoint from the closure of periodic Fatou domains. Applying the above argument for $\tau^n(x)$, we obtain another Jordan curve $K_n$ as a Julia component and $K_n$ is also disjoint from the closure of periodic Fatou domains. From $\pi\circ f=\tau\circ\pi$ on $X$, we obtain $K_n=f^n(K)$. This shows that $K$ is disjoint from the closure of any Fatou domain. Thus $K$ is a buried Julia component. This shows that $\pi(\sA\cup\sC)=\overline{X}$.
Now the proof is complete.
\end{proof}

For our purpose, we want to know whether $\sA$ is an infinite collection, or equivalently, whether $\overline{X}\smm X$ is an infinite set by Theorem \ref{tree}. The next lemma provides a necessary and sufficient combinatorial condition. Refer to \cite{CPT} for the next definition.

Let $\G$ be a multicurve. Denote by $\G_1$ the collection of curves in $F^{-1}(\G)$ isotopic to a curve in $\G$, and denote by $\G_n$ the collection of curves in $F^{-1}(\G_{n-1})$ isotopic to a curve in $\G$ for $n\ge 2$. For each $\g\in\G$, denote
$$
\kappa_n(\g)=\#\{\beta\in\G_n:\,\ \beta\text{ is isotopic to $\g$}\}.
$$
The multicurve $\G$ is a {\bf Cantor multicurve} if $\kappa_n(\g)\to\infty$ as $n\to\infty$ for all $\g\in\G$.

\begin{lemma}\label{cantor multicurve}
Let $f$ be a non-degenerate and generic sub-hyperbolic rational map. Let $\G$ be a canonical multicurve of $f$ and $\tau: (T, X_1)\to(T, X_0)$ be the Shishikura tree map of $f$. The following conditions are equivalent.

(a) $\G$ contains a Cantor multicurve.

(b) $\tau$ has infinitely many repelling periodic cycles.

(c) $\tau$ has wandering points.
\end{lemma}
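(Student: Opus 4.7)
The plan is to introduce the unweighted transition matrix $A=(a_{ij})$ of the tree map $\tau$, where $a_{ij}$ is the number of edges $J$ of $(T,X_1)$ with $J\subset e(\g_i)$ and $\tau(J)=e(\g_j)$, for $\g_i,\g_j\in\G$. I will show that each of (a), (b), (c) is equivalent to the existence of an irreducible sub-matrix $A_0$ of $A$ (indexed by some $\G_0\subset\G$) with spectral radius $\lambda(A_0)>1$. The proof rests on two elementary identities. First, $\kappa_n^{\G_0}(\g_i)=\sum_{j\in\G_0}(A_0^n)_{ij}$ for every $\g_i\in\G_0$, so $\G_0$ is a Cantor multicurve iff every row sum of $A_0^n$ is unbounded. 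Second, using Lemma \ref{interval} and Corollary \ref{2sides}, the restriction of $\tau$ to $\overline{X}\smm X$ is conjugate to the subshift of finite type defined by $A$: every $y\in\overline{X}\smm X$ has a unique itinerary $(i_0,i_1,\ldots)$ with $\tau^n(y)\in e(\g_{i_n})$ and $a_{i_n i_{n+1}}\ge 1$, and any admissible infinite word yields a unique point of $\overline{X}\smm X$ by intersecting the nested preimage sub-edges; periodic points of $\tau$ in $\overline{X}\smm X$ correspond bijectively to periodic admissible words and are exactly the repelling periodic points in $T\smm X_0$ by the discussion after Corollary \ref{2sides}.

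For (a) $\Leftrightarrow$ (existence of such $A_0$), the ``$\Leftarrow$'' direction follows from the first identity together with Perron--Frobenius, since for an irreducible non-negative matrix with $\lambda>1$ every row sum of iterates grows. For the converse, given a Cantor sub-multicurve $\G_0$, I pass to a terminal strongly connected component of its digraph on which $\kappa_n$ still grows (one exists by the Cantor condition); the resulting irreducible sub-matrix cannot have $\lambda=1$, because an irreducible non-negative integer matrix with $\lambda=1$ is a permutation matrix, whose iterates have bounded entries. For (b) $\Leftrightarrow$ (existence of such $A_0$), I use that $\mathrm{tr}(A^n)$ counts periodic admissible words of period dividing $n$; having infinitely many repelling cycles of $\tau$ in $\overline{X}\smm X$ is equivalent to $\mathrm{tr}(A^n)$ being unbounded, which by Perron theory is equivalent to $\lambda(A_0)>1$ for some irreducible component $A_0$.

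For (c) $\Leftrightarrow$ (existence of such $A_0$), if $\lambda(A_0)>1$ the subshift admits uncountably many admissible infinite words while periodic ones are countable, so wandering admissible words exist and yield wandering points of $\tau$ by the Markov correspondence. Conversely, given a wandering point $x$, I first localize $x\in\overline{X}\smm X$: the case $x\in X$ is ruled out because $\tau^n(x)\in X_0$ for some $n$ and $X_0$ is finite, forcing periodicity; the case $x\in T\smm\overline{X}$ is ruled out by Lemma \ref{interval}, since the component of $T\smm\overline{X}$ containing $x$ eventually maps onto a single edge of $(T,X_0)$ and its image cycles, making $x$ preperiodic. The infinite non-repeating itinerary of $x$ then visits some state $\g_i$ infinitely often with two distinct return words, producing two different admissible closed walks through $\g_i$ and forcing $\lambda>1$ in the irreducible component of $\g_i$.

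The main obstacle is the Markov correspondence: one must verify that the intersection of the nested preimage sub-edges along an admissible infinite word is a single point of $\overline{X}\smm X$ rather than a non-degenerate interval. A non-degenerate intersection would be a component of $T\smm\overline{X}$ whose entire forward orbit stays inside the irreducible block; by Lemma \ref{interval} such a component eventually becomes a full edge of $(T,X_0)$ and cycles, so only eventually periodic itineraries produce non-degenerate intersections, and every wandering admissible word does nest to a single point. A secondary care-point is separating the indifferent periodic orbits supported in $T\smm\overline{X}$ (finite cycles of full edges with derivative $1$) from the repelling periodic orbits in $\overline{X}\smm X$, so that only the latter are counted in condition~(b).
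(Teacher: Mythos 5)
Your proposal is correct in outline but takes a genuinely different route from the paper. The paper proves the three implications directly on the tree: for (a)$\Rightarrow$(b),(c) it restricts to $T_1 = T_0\cap\tau^{-1}(T_0)$ on the edges of a Cantor sub-multicurve, uses the Cantor condition plus a pigeonhole on indices to extract a two-fold sub-cover $\omega: J_0\cup J_1\to I_1$, and then invokes the classical fact that such a map has infinitely many repelling cycles and wandering points; for (b)$\Rightarrow$(a) and (c)$\Rightarrow$(a) it shrinks preimage sub-intervals around two repelling periodic points (resp.\ around two distinct returns of a wandering orbit) in a common edge $I$ until they become disjoint, so that $\tau^{-n}(I)$ has two components in $I$, and then takes the "forward-invariant closure" of $\g$ to build a Cantor sub-multicurve. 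Your approach instead funnels all three conditions through a single spectral criterion (existence of an irreducible block $A_0$ of the unweighted transition matrix with $\lambda(A_0)>1$), established via the Markov coding of $\tau|_{\overline{X}\smm X}$. This buys a cleaner unifying statement and avoids the pigeonhole/index-relabelling in the paper's (a)$\Rightarrow$(b),(c), but it pushes the work into the Markov semi-conjugacy, where several points you flag need more care: (i) the itinerary map on $\overline{X}\smm X$ is injective because a non-degenerate nested intersection $J$ has $\mathrm{int}(J)\cap\overline{X}=\emptyset$, so, by the double-sided accumulation property (Corollary \ref{2sides}), $\partial J$ meets $X$ but not $\overline{X}\smm X$; (ii) periodic admissible words that nest to a non-degenerate $J$ force $J=\tau^q(J)$ to be a full edge of $(T,X_0)$ (use that $J$ is an edge of every $(T,\tau^{-m}(X_0))$ so $\tau^q|_I$ is genuinely linear), hence there are only finitely many such anomalous words, which is exactly what makes the trace/counting argument in (b) close; (iii) your dichotomy $\lambda=1\Rightarrow$ permutation for an irreducible non-negative integer matrix is correct (via the row-sum bound $\min_i\sum_j a_{ij}\le\lambda\le\max_i\sum_j a_{ij}$ with equality forcing equal row sums), but you should state it since it is the pivot of both the (a) and (b) equivalences; and (iv) in "terminal SCC," you need the SCC with no outgoing arrows in the digraph $\g_i\to\g_j$ when $a_{ij}>0$, so that $\kappa_n^{\G_0}$ restricted to it equals the intrinsic $\kappa_n$ of that block. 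With these points filled in, the proposal is a valid alternative proof; it is somewhat more structural (all three conditions reduced to one Perron--Frobenius datum) at the cost of carefully dealing with the boundary cases of the coding, which the paper's shrinking-interval arguments sidestep.
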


\begin{proof}
(a) $\Rightarrow$ (b) and (c).
Let $\G_0\subset\G$ be a Cantor multicurve. Denote by $I_k$ ($1\le k\le n$) the edges of $(T, X_0)$ corresponding to $\G_0$. Set $T_0=\cup_{k=1}^n I_k$
and $T_1=T_0\cap\tau^{-1}(T_0)$. Then $T_1\subset T_0$ and $\tau(T_1)\subset T_0$. Moreover, $\tau$ maps each component of $T_1$ onto one edge.

Denote $\tau_0=\tau|_{T_1}$. The multicurve $\G_0$ is a Cantor multicurve implies that for any integer $M>0$, there is an integer $N>0$ such that $\tau_0^{-N}(T_0)\cap I_k$ contains at least $M$ components for each $1\le k\le n$.

Take $M=n+1$. Then for some $N>0$, $\tau_0^{-N}(T_0)\cap I_k$ contains at least $n+1$ components for each $1\le k\le n$. Thus at least two of them map to the same edge under $\tau_0^N$. Define a map $\sigma$ on the index set $\{1, \cdots, n\}$ by $\sigma(i)=j$ if $\tau_0^{-N}(T_0)\cap I_i$ has two components mapping to $I_j$. Here we need to point out that the definition of $\sigma$ is not uniquely determined. As the index set is finite, each index is eventually periodic. In particular, there is a periodic index. By relabelling the edges, we may assume that the index $1$ is a periodic index with period $p\ge 1$. This implies that $\tau_0^{-pN}(T_0)\cap I_1$ has at least $2^p$ components mapping to $I_1$ by $\tau^{pN}$.

Let $J_0$ and $J_1$ be two distinct components of $\tau_0^{-pN}(T_0)\cap I_1$ mapping onto $I_1$ by $\tau^{pN}$. Write $\omega=\tau^{pN}|_{J_0\cup J_1}$ for simplicity. It is classical that the linear map $\omega: J_0\cup J_1\to I_1$ contains infinitely many repelling periodic cycles and wandering points.
So does $\tau$.

(b) $\Rightarrow$ (a). Suppose that $\tau$ has infinitely many repelling periodic cycles. Then there is an edge $I$ of $T$ such that it contains two repelling periodic points $x_1$ and $x_2$ with periods $p_1, p_2\ge 1$ respectively. For $i=1,2$, let $J_i$ be the component of $\tau^{-p_i}(I)$ that contains the point $x_i$. Denote
$$
J_i^n=(\tau^{p_i}|_{J_i})^{-n}(I).
$$
Then $J_i^n\to x_i$ as $n\to\infty$. Thus there is an integer $k>0$ such that $J_1^k$ is disjoint from $J_2^k$. Thus $\tau^{-kp_ip_j}(I)$ has at least two components contained in $I$, one is contained in $J_1^k$, and another is contained in $J_2^k$.

Let $\g\in\G$ be the curve corresponding to the edge $I$. Then $f^{-kp_1p_2}(\g)$ has at least two curves isotopic to $\g$. Let $\G_0\subset\G$ be a sub-multicurve defined by $\beta\in\G_0$ if $f^{-n}(\g)$ has a component isotopic to $\beta$ for some $n>0$. It is well-defined since $\G$ is completely stable. It is easy to check that $\G_0$ is a Cantor multicurve.

(c) $\Rightarrow$ (a).
Suppose that $x\in T$ is a wandering point of $\tau$. Then there exists an edge $I$ of $T$ such that it contains infinitely many points of the forward orbit of $x$. Denote by
$$
0<n_1<n_2<\cdots<n_k<\cdots
$$
a sequence of integers such that $\tau^{n_k}(x)\in I$. Let $J_1^k$ be the component of $\tau^{n_1-n_k}(I)$ that contains $\tau^{n_1}(x)$. Then $J_1^k$ is disjoint from $\tau^{n_1-n_k}(X_0)$. Thus $|J_1^k|\to 0$ as $k\to\infty$ since $\tau^i(x)\in\overline{X}\smm X$ is a double-sides accumulation point by Corollary \ref{2sides}.

Let $J_2^k$ be the component of $\tau^{n_2-n_k}(I)$ that contains $\tau^{n_2}(x)$. Then $|J_2^k|\to 0$ as $k\to\infty$. Thus there is an integer $k>2$ such that $J_1^k$ is disjoint from $J_2^k$. This shows that $\tau^{-(n_1-n_k)(n_2-n_k)}(I)$ has at least two components contained in $I$, one is contained in $J_1^k$, and another is contained in $J_2^k$. The same argument as above shows that $\G$ contains a Cantor multicurve.
\end{proof}

\section{Self-grafting}
In this section, we will introduce the procedure of self-grafting on a tree map to construct a new tree map, and apply Theorem \ref{obstruction} to realize the new Shihikura tree map so that the corresponding new rational map has a new cycle of complex Julia components.

Before the discussion of the general procedure of self-grafting, we would like to give a simple example showing a tree map and its self-grafting.

\noindent {\bf Example.} Let $\tau$ be a tree map illustrated as in Figure 2, where
$$
X_0=\{a,b\},\ X_1=\{a,b,a_{-1},b_{-1}\}
$$
and
$$
\tau:\  a\mapsto a, a_{-1}\mapsto a, b\mapsto b, b_{-1}\mapsto b.
$$
Suppose that $\tau(x_0)=x_0$, where $x_0$ is a point on the edge $(b_{-1},a_{-1})$ with vertices $b_{-1}$ and $a_{-1}$ of $(T,X_1)$. Denote by $(x_0,b)$ the edge with vertices $x_0$ and $b$ of $(T,X_0)$. Set $B:=(x_0,b)$.

\begin{figure}[htbp]\centering
\includegraphics[width=12.5cm]{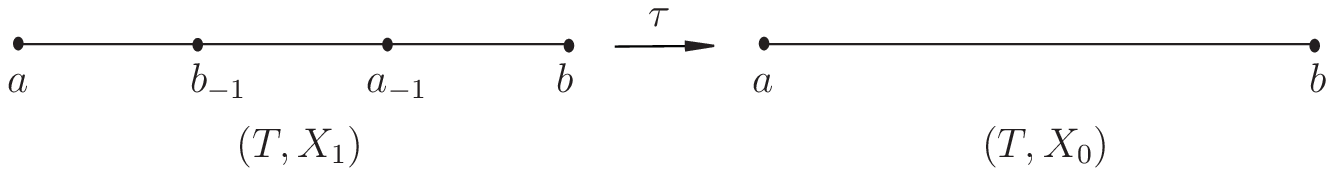}
%\begin{center}{\sf Figure 2.}
%\end{center}
\label{exampletree1}
\end{figure}

\begin{figure}[htbp]\centering
\includegraphics[width=12.5cm]{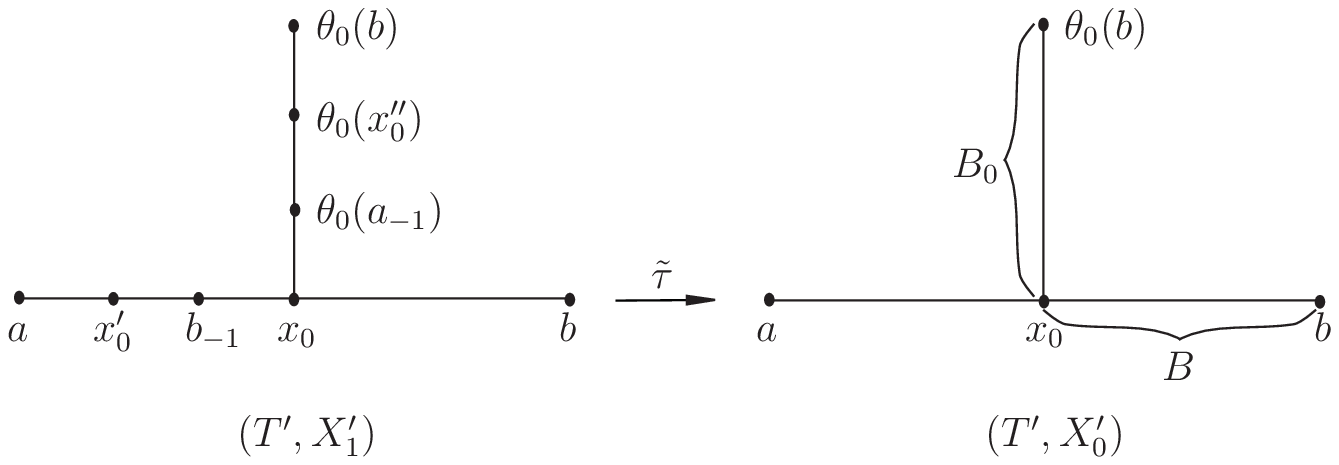}
\begin{center}{\sf Figure 2.}
\end{center}
\label{exampletree2}
\end{figure}

Define a new tree $T'\supset T$ as following (refer to Figure 2): $T'\smm T$ has exactly one component, denoted by $B_0$, satisfying that $B_0$ is homeomorphic to $B$ by a linear map $\theta_0:\, B\to B_0$, and $B_0$ is connected to $T$ at the point $x_0$. Denote by $X_0'$ the vertex set of $T'$ and $X_0'=\{a,b,x_0,\theta_0(b)\}$. Define
$$
\tau_0=
\begin{cases}
\tau:\, T\to T, \\
\mathrm{id}:\, B_0\to B_0, \\
\end{cases}
$$
and
$$
\kappa=
\begin{cases}
\mathrm{id}:\, T'\smm(B\cup B_0)\to T'\smm(B\cup B_0), \\
\theta_0:\, B\to B_0, \\
\theta_0^{-1}:\, B_0\to B.
\end{cases}
$$
Set $\tilde\tau=\tau_0\circ\kappa:\ T'\to T'$ and $X'_1=\tilde\tau^{-1}(X'_0)$. It is easy to verify that
$$
X'_1=\{a,x_0',b_{-1},x_0,b,\theta_0(a_{-1}),\theta_0(x_0''),\theta_0(b)\},
$$
where $x_0'$ and $x_0''$ are the preimages of $x_0$ under the map $\tau$ in the edges $(a,b_{-1})$ and $(a_{-1},b)$ of $(T,X_0')$ respectively, and
$$
\tilde\tau=
\begin{cases}
\tau:\ a\mapsto a,x_0'\mapsto x_0,b_{-1}\mapsto b,x_0\mapsto x_0, \\
\theta_0:\ b\mapsto \theta_0(b), \\
\tau\circ\theta_0^{-1}: \theta_0(a_{-1})\mapsto a,\theta_0(x_0'')\mapsto x_0,\theta_0(b)\mapsto b.
\end{cases}
$$
The new tree map $\tilde\tau:\, (T', X'_1)\to(T', X'_0)$ is called a self-grafting of the tree map $\tau$.

Now we introduce the general definition of a self-grafting. Let $\tau:\, (T, X_1)\to(T, X_0)$ be a tree map. Suppose that $O$ is a repelling periodic cycle in $T\smm X_0$ with period $p\ge 1$. Since $T$ is a tree (containing no loops), $T\smm O$ has a component whose boundary contains exactly one point in $O$. Denote this component by $B$ and this point by $x_0$. Obviously, $B$ is a component of $T\smm\{x_0\}$ and $B\cap O=\emptyset$.

Define a new tree $T'\supset T$ by the following (refer to Figure 3): there are exactly $p$ components $B_i$ of $T'\smm T$, $0\le i\le p-1$, each of them is homeomorphic to $B$ by a linear map $\theta_i:\, B\to B_i$, and $B_i$ is connected to $T$ at the point $x_i=\tau^i(x_0)$. The vertices $X'_0$ on $T'$ is assigned to be the original vertices on $T$ together with $O\cup\bigcup_{i=0}^{p-1}\theta_i(B\cap X_0)$.

Define a tree map $\tilde\tau: T'\to T'$ by $\tilde\tau=\tau_0\circ\kappa$, where
$$
\tau_0=
\begin{cases}
\tau:\, T\to T, \\
\theta_{i+1}\circ\theta_i^{-1}:\, B_i\to B_{i+1} \,\text{ for }0\le i<p-1, \\
\theta_0\circ\theta_{p-1}^{-1}:\, B_{p-1}\to B_0, \\
\end{cases}
$$
and
$$
\kappa=
\begin{cases}
\mathrm{id}:\, T'\smm(B\cup B_0)\to T'\smm(B\cup B_0), \\
\theta_0:\, B\to B_0, \\
\theta_0^{-1}:\, B_0\to B.
\end{cases}
$$
Set $X'_1=\tilde\tau^{-1}(X'_0)$. We call the new tree map $\tilde\tau:\, (T', X'_1)\to(T', X'_0)$ a {\bf self-grafting} of the tree map $\tau$ along the orbit $\{x_0,\cdots, x_{p-1}\}$.

\begin{figure}[htbp]\centering
\includegraphics[width=9cm]{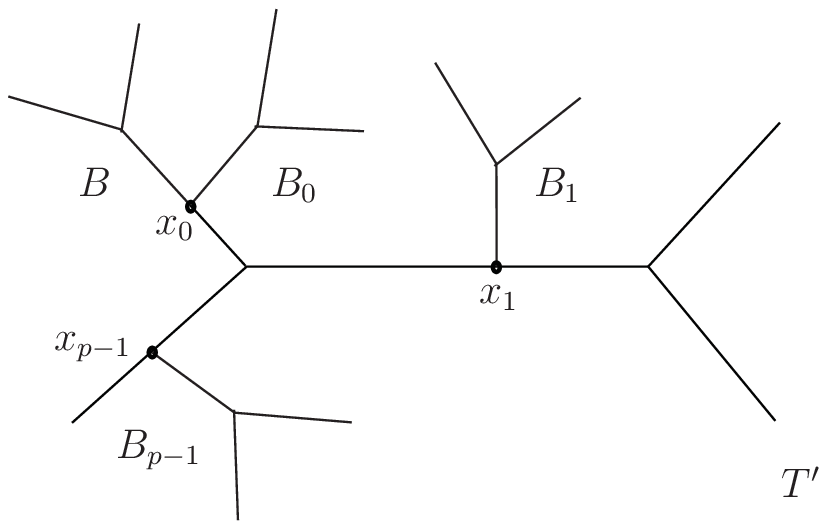}
\begin{center}{\sf Figure 3. A self-grafting along the orbit $\{x_0,\cdots, x_{p-1}\}$}
\end{center}
\label{self-grafting-1}
\end{figure}

By the definition, $\tilde\tau^i=\theta_i: B\to B_i$ for $0<i<p$ and $\tilde\tau^p=\theta_0: B\to B_0$. Thus $\tilde\tau^{p+1}=\tau$ on $B$.
So the original tree map $\tau$ can be expressed by $\tilde\tau$ as
$$
\tau=
\begin{cases}
\tilde\tau\, & \text{ on } T\smm B, \\
\tilde\tau^{p+1}\, & \text{ on } B.
\end{cases}
$$
In conclusion, any periodic point of $\tau$ with period $q\ge 1$ is also a periodic point of $\tilde\tau$ with period $q+kp$, where $k\ge 0$ is the number of times at which the cycle passes through $B$. Conversely, any cycle of $\tilde\tau$ meeting some $B_i$ must pass through $B$ since $\tilde\tau^{-i}(B_i)=B$ for $0<i<p$ and $\tilde\tau^{-p}(B_0)=B$.

For any weight $w$ on the tree $(T, X_1)$, the {\bf induced weight} $\tilde w$ on the tree $(T', X'_1)$ is defined as the following:
for each edge $J'$ of $(T', X'_1)$,
$$
\tilde w(J')=
\begin{cases}
1, & \text{ if }J'\subset B\cup\bigcup_{i=1}^{p-1}B_i, \\
w(\kappa(J')), & \text{ if }J'\subset B_0, \\
w(J'),  & \text{ if }J'\subset T\smm B.
\end{cases} \eqno{(1)}
$$
Here we need to point out that if $J'\subset B_0$, then $\kappa(J')$ need not to be an edge of $(T, X_1)$. However, it must be contained in an edge $J$ of $(T, X_1)$ since $\tau(\kappa(J'))=\tilde\tau(J')$ is disjoint from $X_0$. If $J'\subset T\smm B$, then $J'$ is also contained in an edge $J$ of $(T, X_1)$ since $\tilde\tau(J')=\tau(J')$. We define $w(J')=w(J)$ in both cases. To clarify the relationship between the edges of $(T',X_1')$ and edges of $(T,X_1)$, we want to go back to the example given at the beginning of this section and check the above statements for the example. In the example, if $J'\subset B_0$, then
$$
J'=
\begin{cases}
(x_0,\theta_0(a_{-1}))\stackrel{\kappa=\theta_0^{-1}}{\longrightarrow} (x_0,a_{-1})\subset (b_{-1},a_{-1}), \\
(\theta_0(a_{-1}),\theta_0(x_{0}''))\stackrel{\kappa=\theta_0^{-1}}{\longrightarrow} (a_{-1},x_0'')\subset (a_{-1},b), \\
(\theta_0(x_{0}''),\theta_0(b))\stackrel{\kappa=\theta_0^{-1}}{\longrightarrow} (x_0'',b)\subset (a_{-1},b). \\
\end{cases}
$$
Note that both $(b_{-1},a_{-1})$ and $(a_{-1},b)$ are edges of $(T,X_1)$. Now suppose $J'\subset T\smm B$. Then
$$
J'=
\begin{cases}
(a,x_0)\subset (a,b_{-1}), \\
(x_0',b_{-1})\subset (a,b_{-1}), \\
(b_{-1},x_0)\subset (b_{-1},a_{-1}). \\
\end{cases}
$$
Obviously, both $(a,b_{-1})$ and $(b_{-1},a_{-1})$ are edges of $(T,X_1)$.
\begin{lemma}\label{I-eigenvalue}
Let $w$ be a weight on the tree $(T, X_1)$ and let $\tilde w$ be the induced weight on $(T', X'_1)$. Then $\lambda(M(\tilde\tau, \tilde w))<1$ if $\lambda(M(\tau, w))<1$.
\end{lemma}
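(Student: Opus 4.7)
The plan is to verify the hypothesis of Lemma~\ref{tree-eigen} for $\tilde\tau$ by constructing a linear metric $\tilde\rho$ on $T'$ under which $\tilde\tau$ is contracting with respect to $\tilde w$; this yields $\lambda(M(\tilde\tau,\tilde w))<1$. From $\lambda(M(\tau,w))<1$ and Lemma~\ref{eigenvalue}, first fix a positive vector $v$ indexed by the edges of $(T,X_0)$ with $M(\tau,w)v\le\lambda v$ for some $\lambda<1$, and let $\rho$ be the linear metric on $T$ with $|I|_\rho=v_I$, so $\sum_{J\subset I}|\tau(J)|_\rho/w(J)\le\lambda v_I$ for every edge $I$ of $(T,X_0)$.

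Extend $\rho$ to a linear metric $\tilde\rho$ on $T'$ as follows. On each edge $I$ of $(T,X_0)$ disjoint from $O$, keep $\tilde\rho=\rho$. On each edge $I_i$ of $(T,X_0)$ containing $x_i\in O$, which splits in $(T',X_0')$ at $x_i$ into sub-edges $I_i^L$ and $I_i^R$, assign lengths $\alpha_iv_{I_i}$ and $(1-\alpha_i)v_{I_i}$ for parameters $\alpha_i\in(0,1)$ to be determined. On each added branch $B_j$ ($0\le j\le p-1$), set the lengths so that $\theta_j$ scales $\tilde\rho$-lengths by a positive constant $c_j$, to be determined. The contraction condition of Lemma~\ref{tree-eigen} must now be checked edge by edge, using that $\tilde\tau=\tau$ on $T\smm B$, $\theta_1$ on $B$, $\theta_{j+1}\theta_j^{-1}$ on $B_j$ for $1\le j\le p-2$, $\theta_0\theta_{p-1}^{-1}$ on $B_{p-1}$, and $\tau\circ\theta_0^{-1}$ on $B_0$ (with the obvious degeneracies when $p=1$).

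For an edge $I$ of $(T,X_0)$ in $T\smm B$ disjoint from $O$, the refinement of $(T,X_1)$ by preimages of $O$ in $(T',X_1')$ is absorbed by the identity $|I_m^L|_{\tilde\rho}+|I_m^R|_{\tilde\rho}=v_{I_m}$ together with linearity of $\tau$ on each edge $J$ of $(T,X_1)$, so the contraction sum collapses to $\sum_{J\subset I}v_{\tau(J)}/w(J)\le\lambda v_I$ and the condition holds automatically. Edges of $(T',X_0')$ in $B$ or in $B_j$ with $j\ne 0$ are single edges of $(T',X_1')$ with weight $\tilde w=1$, reducing the contraction conditions to the scalar inequalities $c_1<1$, $c_{j+1}<c_j$ for $1\le j\le p-2$, and $c_0<c_{p-1}$. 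For each edge $\theta_0(e)\subset B_0$, pulling back along $\theta_0^{-1}$ identifies its contraction sum with the $\tau$-sum $\sum_{J\subset e}v_{\tau(J)}/w(J)\le\lambda v_e$, so the condition becomes $c_0>\lambda$. These scalar constraints are satisfied jointly by any chain $\lambda<c_0<c_{p-1}<\cdots<c_1<1$, which exists since $\lambda<1$.

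The main obstacle is the remaining case of the split sub-edges $I_i^L,I_i^R$ in $T$, together with the Type-C edge $\theta_0(I_0^B)$ in $B_0$. Let $J_0\subset I_i$ be the edge of $(T,X_1)$ containing $x_i$ in its interior; its halves $J_0^L,J_0^R$ map by $\tau$ onto the two sub-edges of $I_{i+1}$, of $\tilde\rho$-lengths $\alpha_{i+1}v_{I_{i+1}}$ and $(1-\alpha_{i+1})v_{I_{i+1}}$. Hence the contraction sums $S(I_i^L),S(I_i^R)$ on the sub-edges depend linearly on $\alpha_{i+1}$; crucially, however, their sum $S(I_i^L)+S(I_i^R)=\sum_{J\subset I_i}v_{\tau(J)}/w(J)\le\lambda v_{I_i}$ is independent of $\vec\alpha$, leaving uniform slack $(1-\lambda)v_{I_i}>0$. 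I would then apply Brouwer's fixed-point theorem to the continuous self-map $\Psi:[0,1]^p\to[0,1]^p$ sending $\vec\alpha$ to the vector of midpoints of the (non-empty) admissible intervals for the $\alpha_i$; at a fixed point $\vec\alpha^*$,
\[
\alpha_i^*v_{I_i}-S(I_i^L)^*=(1-\alpha_i^*)v_{I_i}-S(I_i^R)^*=\tfrac12\bigl(v_{I_i}-S(I_i^L)^*-S(I_i^R)^*\bigr)\ge\tfrac12(1-\lambda)v_{I_i}>0,
\]
so both sub-edges of every $I_i$ contract. Combined with the earlier choice of $c_j$, $\tilde\tau$ is contracting on every edge of $(T',X_0')$, and Lemma~\ref{tree-eigen} yields $\lambda(M(\tilde\tau,\tilde w))<1$.
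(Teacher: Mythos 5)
Your overall route --- verify the hypothesis of Lemma~\ref{tree-eigen} for $\tilde\tau$ by extending a contracting metric for $\tau$ to $T'$, introducing free branch scales $c_j$ and split proportions $\alpha_i$ --- is reasonable, but there is a genuine gap at the grafting vertex $x_0$, exactly where $\tilde\tau$ stops agreeing with $\tau$. The identity you rely on to run Brouwer, namely that $S(I_i^L)+S(I_i^R)=\sum_{J\subset I_i}v_{\tau(J)}/w(J)\le\lambda v_{I_i}$ is independent of $\vec\alpha$, holds for $1\le i\le p-1$ (there $I_i\subset T\smm B$, so $\tilde\tau=\tau$ on both halves), but fails for $i=0$: the half $I_0^L=I_0^B$ lies in $\overline{B}$, where $\tilde\tau=\theta_1$, so $S(I_0^L)=c_1\alpha_0 v_{I_0}$, which depends on $\alpha_0$ and $c_1$ rather than on $\alpha_1$, and $S(I_0^L)+S(I_0^R)$ is not a $\tau$-sum over $I_0$ at all. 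Thus the uniform slack $\tfrac12(1-\lambda)v_{I_0}$ does not follow and the fixed-point formula breaks in the $\alpha_0$-coordinate. Compounding this, the ``Type-C'' edge $\theta_0(I_0^B)\subset B_0$ is flagged as a remaining case but never resolved. Its contraction condition reads $S(\theta_0(I_0^B))<c_0\alpha_0 v_{I_0}$, and $S(\theta_0(I_0^B))$ is precisely the leftover $\tau$-part of the $I_0$-sum: one has $S(\theta_0(I_0^B))+S(I_0^R)=\sum_{J\subset I_0}v_{\tau(J)}/w(J)\le\lambda v_{I_0}$, and $S(\theta_0(I_0^B))$ contains the term $|\tau(J_0^L)|_{\tilde\rho}/w(J_0)$ that depends on $\alpha_1$, so your blanket reduction ``the condition becomes $c_0>\lambda$'' only applies to full edges $e\subset B$, not to $e=I_0^B$. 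At $x_0$ the unknowns $\alpha_0$, $\alpha_1$, $c_0$, $c_1$ are therefore coupled through three inequalities (from $I_0^R$, $I_0^L$, and $\theta_0(I_0^B)$), and the joint feasibility of this system is what actually needs to be checked; it is not addressed.

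The paper sidesteps all of this by first enlarging the vertex set to $X_0''=X_0\cup O$ and proving $\lambda(M(\tau'',w))\le\lambda(M(\tau,w))<1$ with a short linear-algebra step: summing the coordinates of a non-negative eigenvector of $M(\tau'',w)$ over the sub-edges lying in each original edge produces an eigenvector of $M(\tau,w)$ with the same eigenvalue. Lemma~\ref{tree-eigen} then yields a metric $\rho$ on $(T,X_0'')$ in which $\tau$ is $\lambda_1^p$-contracting; the lengths of $I_i^L,I_i^R$ are thereby \emph{fixed by the lemma with contraction already built in}, and the remaining edges on the branches $B_i$ contract after the single geometric rescaling $|\cdot|_1=\lambda_1^i|\theta_i^{-1}(\cdot)|$. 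No fixed-point argument and no exceptional analysis at $x_0$ are required. If you wish to keep your construction, the repair is either to adopt this $X_0''$-refinement first, or to replace the generic $i\ge 1$ slack estimate at $i=0$ with a direct feasibility check of the three coupled inequalities at $x_0$.
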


\begin{proof}
At first, We consider the tree map $\tau''=\tau: (T, X''_1)\to (T, X''_0)$, where $X''_0=X_0\cup O$ and $X''_1=\tau^{-1}(X''_0)$.

Denote by $\{I_1, \cdots, I_n\}$ the edges of $(T, X_0)$. Denote by $\{I''_1, \cdots, I''_m\}$ the edges of $(T, X''_0)$. Let $\Theta=\{1,\cdots, m\}$ be the index set. It is divided into $\Theta=\Theta_1\sqcup\cdots\sqcup\Theta_n$ such that $I''_i\subset I_k$ if $i\in\Theta_k$.

The weight $w$ on $(T, X_1)$ induces a weight on $(T, X''_1)$, denoted also by $w$, such that $w(J'')=w(J)$ is $J''\subset J$. Let $M(\tau, w)=(a_{kl})$ and $M(\tau'', w)=(b_{ij})$ be the transition matrices. Then for each pair $(k,l)$ and any $j\in\Theta_l$,
$$
\sum_{i\in\Theta_k} b_{ij}=a_{kl} \eqno{(2)}
$$
since for all $j\in\Theta_l$, $\tau^{-1}(I''_j)$ have the same number of components in $I_k$.

Let $\lambda$ be the leading eigenvalue of $M(\tau'', w)$. Then there is a non-zero vector $v=(v_i)\ge 0$ such that $M(\tau'', w)v=\lambda v$. Let $u=(u_k)$ be a vector defined by $u_k=\sum_{i\in\Theta_k}v_i$. Then $u$ is also a non-zero vector with $u\ge 0$. Now the equation (2) implies that $M(\tau, w)u=\lambda u$.
This shows that $\lambda$ is also an eigenvalue of $M(\tau, w)$. Thus $\lambda\le\lambda(M(\tau, w))<1$.

Now let us compare the tree maps $\tilde\tau$ with $\tau''$. Each edge of $(T, X''_0)$ is also an edge of $(T', X'_0)$. By Lemma \ref{tree-eigen}, there exist a linear metric $\rho$ on $(T, X''_0)$ and a constant $0<\lambda_1<1$ such that for each edge
$I$ of $(T, X''_0)$,
$$
\sum_J\frac{|\tau(J)|}{w(J)}<\lambda_1^p|I|, \eqno{(3)}
$$
where the sum is taken over all the edges $J$ of $(T, X''_1)$ in $I$ and $|\cdot|$ denotes the length with respect to the metric $\rho$.

Define a linear metric $\rho_1$ on $T'$ such that for each edge $I$ of $(T', X'_0)$, $|I|_1=|I|$ if $I$ is contained in an edge of $(T,X_0'')$, and $|I|_1=\lambda_1^i|\theta_i^{-1}(I)|$ if $I\subset B_i$ for $0<i<p$, and $|I|_1=\lambda_1^p|\theta_0^{-1}(I)|$ if $I\subset B_0$, where $|\cdot|_1$ denotes the length under the metric $\rho_1$.

For each edge $I$ of $(T', X'_0)$ in $T\smm B$, $\tilde\tau(I)=\tau(I)$ is contained in an edge of $(T,X_0'')$. From (1) and (3), we have
$$
\sum_J\frac{|\tau(J)|_1}{\tilde w(J)}<|I|_1, \eqno{(4)}
$$
where the sum is taken over all the edges $J$ of $(T', X'_1)$ in $I$.

For each edge $I$ of $(T', X'_0)$ in $B$, $J=I$ is also an edge of $(T', X'_1)$ with $\tilde w(J)=1$ and $\tilde\tau(J)=\theta_1(J)$ is contained in $B_1$. Thus
$$
\frac{|\tilde\tau(J)|_1}{\tilde w(J)}=\lambda_1|\theta_1^{-1}(\tilde\tau(J))|=\lambda_1|J|=\lambda_1|I|_1<|I|_1.
$$

For each edge $I_i$ of $(T', X'_0)$ in $B_i$ with $1\le i<p$, $J_i=I_i$ is also an edge of $(T', X'_1)$ with $\tilde w(J_i)=1$ and $\tilde\tau(J_i)=\theta_{i+1}\circ\theta_i^{-1}(J_i)$ is contained in $B_{i+1}$ (set $B_p=B_0$ and $\theta_p=\theta_0$). Thus
$$
\frac{|\tilde\tau(J_i)|_1}{\tilde w(J_i)}=\lambda_1^{i+1}|\theta_{i+1}^{-1}\circ\tilde\tau(J_i)|=\lambda_1^{i+1}|\theta_i^{-1}(J_i)|=\lambda_1|I_i|_1<|I_i|_1.
$$
So the inequality (4) holds for edges in $B$ or $B_i$ for $1\le i<p$.

Let $I_0$ be an edge of $(T', X'_0)$ in $B_0$. Let $I$ be the edge of $B$ with $\theta_0(I)=I_0$. Then $\tau(J)=\tilde\tau(\theta_0(J))$ for each edge $J$ of $(T, X_1'')$ in $I$. From (3), we have
$$
|I_0|_1=\lambda_1^p |I|>\sum_J\frac{|\tau(J)|}{w(J)}=\sum_J\frac{|\tilde\tau(\theta_0(J))|_1}{\tilde w(\theta_0(J))},
$$
where the sum is taken over all the edges $J$ of $(T, X_1'')$ in $I$. So the inequality (4) holds for every edge in $B_0$. Now the lemma  follows from Lemma \ref{tree-eigen}.
\end{proof}

The key step in the proof of Theorem \ref{main} is the next theorem.

\begin{theorem}\label{self-grafting}
Let $f$ be a non-degenerate and generic sub-hyperbolic rational map. Suppose that $f$ has a periodic Jordan curve disjoint from $\PPP_f$ as a buried Julia component with period $p$. Denote this Julia component by $C_0$ and set $C_i=f^i(C_0)$ for $0<i<p$. Let
$$\tau_f:\, (T, X_1)\to (T, X_0)$$
be the Shishikura tree map of $f$ and $\pi_f$ be the projection obtained by Theorem \ref{tree}. Then there exists a non-degenerate and generic sub-hyperbolic rational map $g$ with $\deg g=\deg f$ such that the Shishikura tree map $\tau_g$ of $g$ is the self-grafting of $\tau_f$. More precisely, the procedure of self-grafting is operated along the repelling periodic cycle
$$
\{\pi_f(C_0),\pi_f(C_1),\cdots, \pi_f(C_{p-1})\}\subset T\smm X_0.
$$
Consequently,

(a) $N(g)=N(f)+1$, and

(b) if $\tau_f$ has infinitely many repelling periodic cycles, so does $\tau_g$.

Moreover, $g$ can be chosen to be hyperbolic when $f$ is hyperbolic.
\end{theorem}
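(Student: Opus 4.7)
The strategy is to construct a non-degenerate generic semi-rational branched covering $F:\cbar\to\cbar$ of degree $\deg f$ whose Shishikura tree map realizes the self-grafted tree $\tilde\tau$, verify via Theorem~\ref{obstruction} that $F$ has no Thurston obstruction, and invoke Theorem~B to produce the rational map $g$ c-equivalent to $F$. For the surgery, I choose pairwise disjoint annular neighborhoods $A_i$ of the Jordan curves $C_i$ inside $\cbar\smm\PPP_f$. Inside each $A_i$ I insert a three-holed sphere $P_i$ together with a topological copy $R_i$ of the disk $\Omega_0^B$ comprising the $B$-side of $C_0$; the copy $R_i$ is attached along one of the three boundary circles of $P_i$. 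The resulting surface is again topologically $\cbar$, and I define $F$ on it by: $F=f$ outside the modified region; $F:P_i\to P_{i+1\,\text{mod}\,p}$ homeomorphically; $F:R_i\to R_{i+1\,\text{mod}\,p}$ homeomorphically for $i=1,\dots,p-1$; $F:\Omega_0^B\to R_1$ homeomorphically; and finally $F:R_0\to\Omega_1^B$ reproducing the original dynamics of $f|_{\Omega_0^B}$ via the identification $R_0\cong\Omega_0^B$. A direct local degree count gives $\deg F=\deg f$. Since $A_i\cap\PPP_f=\emptyset$, we have $\PPP'_F=\PPP'_f$, and $F$ inherits the holomorphic structure of $f$ in a neighborhood of $\PPP'_f$, so $F$ is a non-degenerate generic semi-rational map.

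By construction, the canonical multicurve $\G_F$ of $F$ equals $\G_f$ augmented by the new boundary curves of the $P_i$, the Shishikura tree of $F$ is naturally isomorphic to $(T',X_0')$, its tree map is $\tilde\tau$, and the edge-weight $w_F$ coincides with the induced weight $\tilde w$ of Lemma~\ref{I-eigenvalue}. Since $f$ is already a rational map, applying Theorem~\ref{obstruction} to $f$ gives $\lambda(\G_f)=\lambda(M(\tau_f,w_f))<1$; Lemma~\ref{I-eigenvalue} then yields $\lambda(\G_F)=\lambda(M(\tilde\tau,\tilde w))<1$. For the second condition in Theorem~\ref{obstruction}, the periodic Q-type components of $\LLL_F$ are of two kinds: those inherited from the periodic Q-type components of $\LLL_f$ (for which the condition transfers directly from $f$), and the new cycle of pair-of-pants $\{P_0,\dots,P_{p-1}\}$. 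Each $P_i$ is a pair of pants bounded by three curves of $\G_F$, so every essential Jordan curve in $P_i$ is isotopic to one of its boundaries, and no multicurve is essentially contained in $P_i$; the condition is therefore vacuous on the new cycle. Theorem~B then produces a rational map $g$ c-equivalent to $F$, necessarily of degree $\deg f$.

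For (a), each $P_i$ is a Q-type periodic component of $\LLL_g$ that contains no post-critical point of $g$ (as $A_i\cap\PPP_g=\emptyset$ by construction), hence contains no Q-type Fatou domain; by Lemma~\ref{Y2C} it contains exactly one Q-type Julia component, and being Q-type and disjoint from $\PPP_g$ this component cannot be a Jordan curve, hence is complex. Combined with the cycles of complex Julia components inherited from $f$, this gives $N(g)=N(f)+1$. For (b), since $\tilde\tau=\tau$ on $T\smm B$ and $\tilde\tau^{p+1}=\tau$ on $B$, each repelling periodic cycle of $\tau_f$ of period $q$ passing through $B$ exactly $k$ times lifts to a repelling periodic cycle of $\tau_g=\tilde\tau$ of period $q+kp$; thus infinitely many cycles of $\tau_f$ produce infinitely many cycles of $\tau_g$. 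For hyperbolicity, the surgery introduces no new critical points (it is performed inside the $A_i$'s, which are disjoint from $\PPP_f$, and the new $F$ is locally homeomorphic on $R_i,P_i,\Omega_0^B$ except that $R_0$ carries a copy of the dynamics of $f|_{\Omega_0^B}$); the new post-critical points are copies of existing ones lying in the Fatou set of $g$, so $g$ is hyperbolic whenever $f$ is.

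The main obstacle is the careful execution of the surgery in the first paragraph: one must rigorously verify that all local degrees balance so $F$ is a single-valued branched covering of $\cbar$ of degree exactly $\deg f$ with the prescribed combinatorial structure, that the three boundary curves of each $P_i$ are pairwise non-isotopic in $\cbar\smm\PPP_F$ (which uses the post-critical points contributed by the copies in $R_i$), and that the Shishikura tree map of $F$ is indeed naturally isomorphic to $\tilde\tau$ with induced weight $\tilde w$. Once this combinatorial-geometric identification is in place, the analytic conclusion follows directly from Theorem~\ref{obstruction} and Lemma~\ref{I-eigenvalue}.
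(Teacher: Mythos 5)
Your overall strategy agrees with the paper's: build a branched covering realizing the self-grafted tree $\tilde\tau$, check $\lambda(\G_G)<1$ via Lemma~\ref{I-eigenvalue}, observe that the new pair-of-pants cycle $\{P_0,\dots,P_{p-1}\}$ contains no essentially embedded multicurve, handle the inherited cycles, invoke Theorem~\ref{obstruction} and Theorem~B, and then read off (a), (b) and hyperbolicity. The parts of your argument downstream of the surgery are essentially the paper's.

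The gap is precisely the one you flagged, and it is genuine: your cut-and-glue surgery demands that $F:P_i\to P_{i+1}$ and $F:R_i\to R_{i+1}$ be \emph{homeomorphisms}, while simultaneously requiring $F=f$ on $\cbar\smm\bigcup A_i$. But $f$ restricted to the annulus $A_i$ is a covering of $A_{i+1}$ of degree $d_i=\deg(f|_{C_i})$, so on the two boundary circles $\partial A_i$ the map $F=f$ has degree $d_i$; a homeomorphism of $P_i$ onto $P_{i+1}$ cannot match this on $\partial A_i$ unless $d_i=1$, which is not assumed and is not automatic for a buried Jordan-curve cycle. You thus cannot, in general, glue these pieces to a single-valued branched covering of $\cbar$ of degree $\deg f$. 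Related to this, performing surgery near \emph{every} $C_i$ is both unnecessary and the source of the trouble. The paper instead keeps the sphere fixed and only modifies $f$ near $C_0$: it picks a Jordan domain $\Delta_0\Subset L_0$ (the annular component of $\LLL$ around $C_0$), lets $\Delta'\subset L_0$ be a component of $f^{-p}(\Delta_0)$, defines a homeomorphism $\phi_0$ (identity off $L_0$, equal to $(f^p|_{\Delta'})^{-1}$ on $\Delta_0$), a second homeomorphism $\phi_1$ (identity on $\Omega$, conformal on $\Delta\to\Delta_0$, its inverse on $\Delta_0\to\Delta$), and sets $G=f\circ\phi_0\circ\phi_1$. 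Because $G$ is $f$ post-composed with homeomorphisms, $\deg G=\deg f$ and the branched-covering property are automatic; the disks $\Delta_i=f^i(\Delta')\subset L_i$ then play the role of your $R_i$ without any cutting or gluing, and $G^{p+1}=f$ on $\Delta$. That $(f^p|_{\Delta'})$ is a homeomorphism $\Delta'\to\Delta_0$ follows because $\Delta_0\subset L_0$ is disjoint from $\PPP_f$, so there are no critical values, and an unramified cover of a disk by a disk has degree one; no assumption on $\deg(f|_{C_i})$ is needed.

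Two smaller points. First, $\PPP'_F=\PPP'_f$ is not correct: the new map has $\PPP_G=\PPP_f\cup\bigcup_{i=1}^p G^i(\Delta\cap\PPP_f)$, and $\PPP'_G$ acquires new points inside the $\Delta_i$; what is true (and what the paper proves) is that every cycle in $\PPP'_G$ corresponds to a cycle of $f$ in $\PPP'_f$ with period increased by a multiple of $p$, hence is still attracting or super-attracting, so $G$ is semi-rational. You should also arrange the maps $R_i\to R_{i+1}$ to be holomorphic near the copies of $\PPP'_f$ carried by the $R_i$, which is not stated. Second, the identification of the canonical decomposition of $G$ and the verification that $\tau_G$ is literally the self-grafting of $\tau_f$, with the edge weights given by formula~(1), is the other piece of bookkeeping one must actually carry out (the paper does so by exhibiting $\UUU_G$ and the extra cycle $\{L'_0,\dots,L'_{p-1}\}$ explicitly); you assert it without the intermediate construction of $(\UUU_G,\LLL_G)$.
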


\begin{proof}
Note that at least one component of $\cbar\smm\bigcup_{i=0}^{p-1} C_i$ is a Jordan domain, by relabelling the index, we assume that this Jordan domain is bounded by $C_0$. Then $C_1, \cdots, C_{p-1}$ are contained in the same complementary component of $C_0$.

Let $(\UUU,\LLL)$ be a canonical decomposition of $f$. Since $\bigcup_{n>0}f^{-n}(\UUU)=\FFF_f$, as $n$ is large enough, each $C_i$ is contained in an A-type component $L_i$ of $\cbar\smm f^{-n}(\UUU)$ for $0\le i<p$. We may assume $n=0$ for the simplicity. Drop all the D-type components of $\UUU$, the remaining is still a canonical decomposition of $f$. Thus we may assume that each component of $\UUU$ is not D-type. Let $L_i$ be the component of $\LLL$ that contains $C_i$. Then $L_i$ is a closed annulus disjoint from $\PPP_f$.

Pick a Jordan domain $\Delta_0\subset L_0$ such that $\overline{\Delta_0}$ is disjoint from $\partial L_0$. Then there is a component $\Delta'$ of $f^{-p}(\Delta_0)$ such that $\Delta'\subset L_0$ and $\Delta_i=f^i(\Delta')\subset L_i$ for $1\le i<p$. Refer to Figure 4.

\begin{figure}[htbp]
\begin{center}
\includegraphics[width=13.5cm]{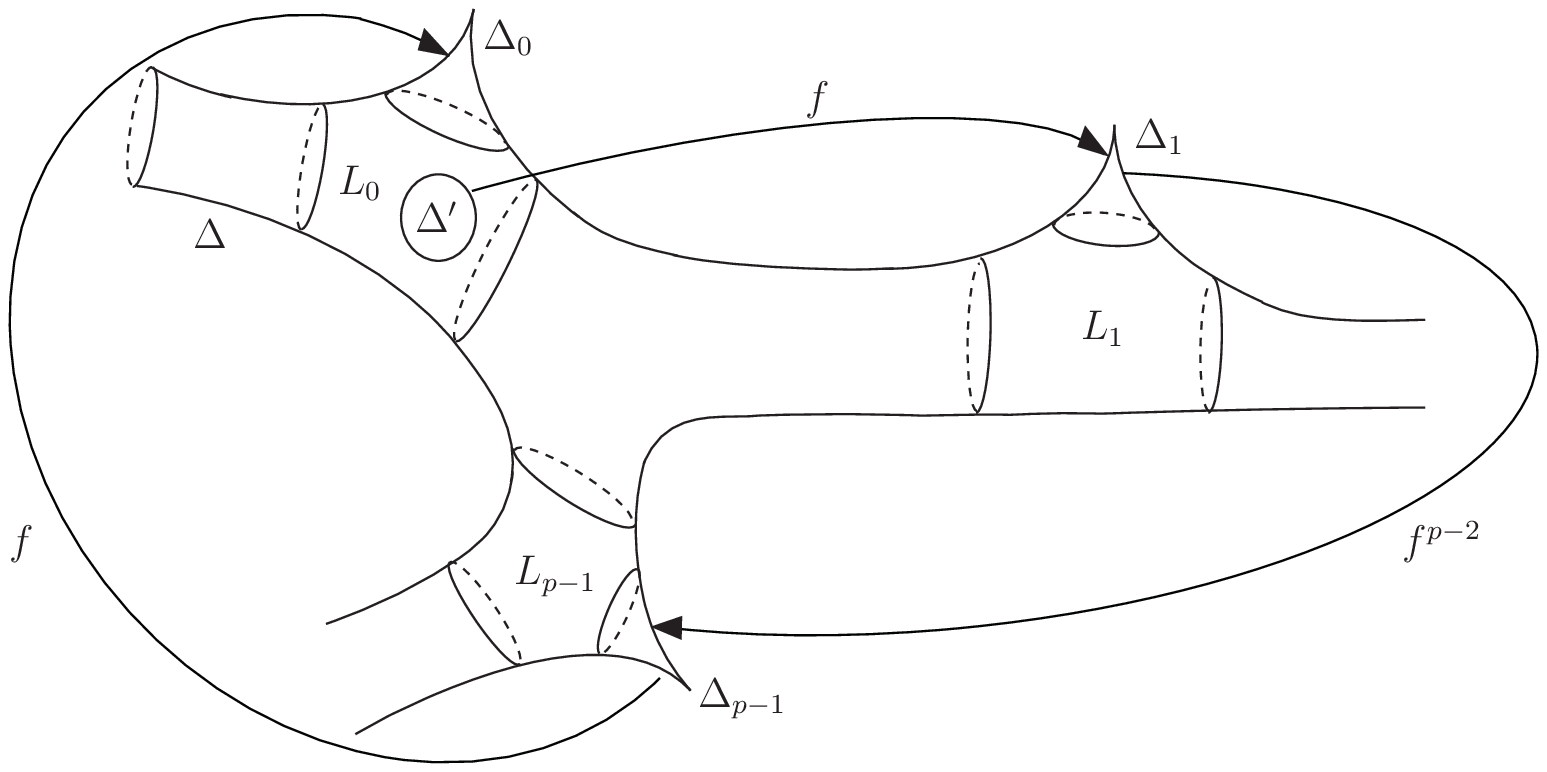}
\end{center}
\begin{center}{\sf Figure 4. The construction of $G$}
\end{center}
\end{figure}

There exists a homeomorphism $\phi_0$ of $\cbar$ such that $\phi_0=\mathrm{id}$ in $\cbar\smm L_0$ and $\phi_0=(f^p|_{\Delta'})^{-1}: \Delta_0\to\Delta'$. Set $F=f\circ\phi_0$. Then $F^p=\mathrm{id}$ in $\Delta_0$ and $F=f$ in $\cbar\smm L_0$.

Denote by $\Omega$ and $\Delta$ the two components of $\cbar\smm L_0$ such that $\Delta_i\subset\Omega$ for $1\le i<p$. Define a homeomorphism $\phi_1$ of $\cbar$ such that
$$
\begin{cases}
\phi_1=\text{\rm id}:\,\Omega\to\Omega, \\
\phi_1: \Delta\to\Delta_0\text{ is conformal}, \\
\phi_1=(\phi_1|_{\Delta})^{-1}: \Delta_0\to\Delta.
\end{cases}
$$

Set $G=F\circ\phi_1$. Then $G=f$ in $\Omega$. Since $F^p=\mathrm{id}$ in $\Delta_0$, we obtain $G^p=\phi_1$ in $\Delta$. Thus $G^{p+1}=f$ in $\Delta$ (see the next diagram).
\[
\xymatrix{
\Delta\ar[d]_{\phi_1}\ar[r]^{G} & \Delta_1\ar[r]^{G=f} & \cdots\ar[r]^{G=f} & \Delta_{0} &  \\
\Delta_0\ar[r]^{\phi_0} & \Delta' \ar[u]_{f} \ &  & \Delta\ar[r]^{\phi_0=\mathrm{id}}\ar[u]^{\phi_1} & \Delta\ar[u]_f }
\]

Conversely, $f$ can be expressed by $G$ as the following:
$$
f=
\begin{cases}
G\, & \text{ on }\Omega, \\
G^{p+1}\, & \text{ on }\Delta.
\end{cases}\eqno{(5)}
$$

By the definition, $G$ and $f$ have the same critical values. From $\PPP_f\subset\Omega\cup\Delta$, we obtain
$$
\PPP_G=\PPP_f\cup\bigcup_{i=1}^{p}P_i\subset\Omega\cup\Delta\cup\Delta_0,
$$
where $P=\Delta\cap\PPP_f$ and $P_i=G^i(P)\subset\Delta_i$ for $0<i\le p$ (set $\Delta_p=\Delta_0$). Moreover,
$$
G^{-i}(P_i)\cap\PPP_G=P. \eqno{(6)}
$$

Note that $G$ is holomorphic in $\Omega\cup\Delta\cup\Delta_0$. From (5), each periodic point of $f$ in $\PPP'_f$ with period $p'\ge 1$ is also a periodic point of $G$ with period $q=p'+kp$, where $k\ge 0$ is the number of times at which the cycle passes through $\Delta$.

From (6), any cycle of $G$ intersecting some $\Delta_i$ in $\PPP'_G$ must pass through $\Delta$. So for each cycle of $G$ in $\PPP'_G$ with period $q$, either it is a cycle of $f$ in $\PPP'_f\cap(\Omega\smm \cup_{i=1}^{p-1}\Delta_i)$, or it must pass through $\Delta$ and there exists a cycle of $f$ in $\PPP'_f$ with period $p'$ such that $q=p'+kp$, where $k$ is the number of times at which the cycle passes through $\Delta$. Therefore, each cycle of $G$ in $\PPP_G'$ is attracting or super-attracting. So $G$ is a semi-rational map.

Now we will construct a canonical decomposition for $G$. Recall that $(\UUU,\LLL)$ is a canonical decomposition of $f$. For the purpose of building a suitable canonical decomposition for $G$, we first modify  $(\UUU,\LLL)$ and give another canonical decomposition $(\UUU', \LLL')$ of $f$. Since $\UUU\Subset f^{-1}(\UUU)$, for each component $U$ of $\UUU$, there exists a tame domain $U'\Subset U$ such that

(i) $\partial U'$ is disjoint from $\PPP_f$,

(ii) each component of $U\smm\overline{U'}$ is an annulus disjoint from $\PPP_f$, and

(iii) $\UUU\Subset f^{-1}(\UUU')$, where $\UUU'=\bigcup_{U\subset\UUU}U'$.

Set $\LLL'=\cbar\smm\UUU'$. Then $(\UUU', \LLL')$ is also a canonical decomposition of $f$. For each component $U$ of $\UUU$ in $\Delta$, choose
$$
U'\Subset U^1\Subset\cdots\Subset U^{p-1}\Subset U^p\Subset U.
$$
Denote $U_i=G^i(U^i)$ for $1\le i\le p$. Then $U_i\Subset\Delta_i$. Set
$$
\UUU_G=\UUU'\cup(\bigcup_{U\subset\Delta}\bigcup_{i=1}^{p} U_i).
$$
Then $\UUU_G\Subset G^{-1}(\UUU_G)$. Denote $\LLL_G=\cbar\smm\UUU_G$. Then each Q-type component $L$ of $\LLL'$ is also a Q-type component of $\LLL_G$.
Moreover, if $L\subset\Delta$, then $G^i(L)$ is a Q-type component of $\LLL_G$ for $1\le i\le p$. There is an extra cycle of Q-type components of $\LLL_G$ consisting of $\{L'_0, \cdots, L'_{p-1}\}$ with $L'_i\supset L_i\smm\Delta_i$.

It is easy to check that $(\UUU_G, \LLL_G)$ is a canonical decomposition of $G$. Denote by
$$\tau_G: (T', X'_1)\to(T', X'_0)$$
the Shishikura tree map of $G$. Denote

$\bullet$ $x_i:=\pi_f(C_i)\in T$: the point corresponding to the Julia component $C_i$ for $0\le i<p$,

$\bullet$ $y_i\in T'$: the point corresponding to the Q-type component $L'_i$ of $\LLL_G$,

$\bullet$ $B$: the component of $T\smm\{x_0\}$ such that vertices in $B$ correspond to the Q-type components of $(\UUU, \LLL)$ contained in $\Delta$.

The reader may refer to Figures 3 and 4. Roughly speaking, in Figures 3 and 4, $\Delta$ corresponds to $B$, $\Delta_0$ corresponds to $B_0$ and $\Omega$ corresponds to $T\smm B$.

The above relations between Q-type components of $(\UUU', \LLL')$ and Q-type components of $(\UUU_G, \LLL_G)$ induce a linear injection $\iota:\, (T, X_0)\to (T', X'_0)$ such that $\iota(x_i)=y_i$ and a linear bijection from $B$ to some component of $T'\smm\{y_i\}$ for $0\le i<p$. Identify the tree $T$ with its image under the injection $\iota$. Then $T\subset T'$. It is easy to check that $\tau_G$ is a self-grafting of $\tau_f$.

The weight for the Shishikura tree map of $G$ exactly equals to the induced weight for the Shishikura tree map of $f$. Let $\G_G$ be a canonical multicurve of $G$. By Lemma \ref{I-eigenvalue}, $\lambda(\G_G)<1$.

The cycle of Q-type components of $\LLL_G$ consisting of $\{L'_0, \cdots, L'_{p-1}\}$ contains essentially no multicurve of $G$ since each $L'_i$ is disjoint from $\PPP_G$ and has only three complementary components.

Let $\G$ be a multicurve contained essentially in a periodic Q-type component $L$ of $\LLL_G$, where $L\ne L_i$ for all $0\le i\le p-1$. Let $q\ge 1$ be its period. The orbit of $L$ either is disjoint from $\Delta$ or passes $k$ times through $\Delta$. In the former case, $L$ is also a cycle of $\LLL$ with the same period. Thus $\lambda(\G,G^q)=\lambda(\G,f^q)<1$.

In the latter case, it contains a cycle of Q-type components of $\LLL$ with period $q_1\ge 1$ and $q=q_1+kp$. Thus when $L$ is a component of $\LLL$, $\lambda(\G, G^q)=\lambda(\G, f^{q_1})<1$. When $L$ is contained in $\Delta_i$ for $1\le i\le p$, $\G'=\{G^{-i}(\g), \g\in\G\}$ is a multicurve essentially contained in $\Delta$ and $\lambda(\G, G^q)=\lambda(\G', f^{q_1})<1$. From Theorem \ref{obstruction}, $G$ has no Thurston obstruction. Thus $G$ is c-equivalent to a rational map $g$.

It is obvious that (a) $N(g)=N(f)+1$ and (b) if $\tau_f$ has infinitely many repelling periodic cycles, so does $\tau_g$. Moreover, $g$ is hyperbolic when $f$ is hyperbolic.
\end{proof}

\section{Proof of Theorem \ref{main}}
In this section, based on Theorem \ref{self-grafting}, we will prove Theorem \ref{main}.

At first, we want to construct a hyperbolic rational map $f$ with $\deg f=3$ such that $N(f)=1$ and it has infinitely many periodic Jordan curves as buried components of $\JJJ_f$. Refer to Figure 5 for the construction.

\begin{figure}[htbp]\centering
\includegraphics[width=14cm]{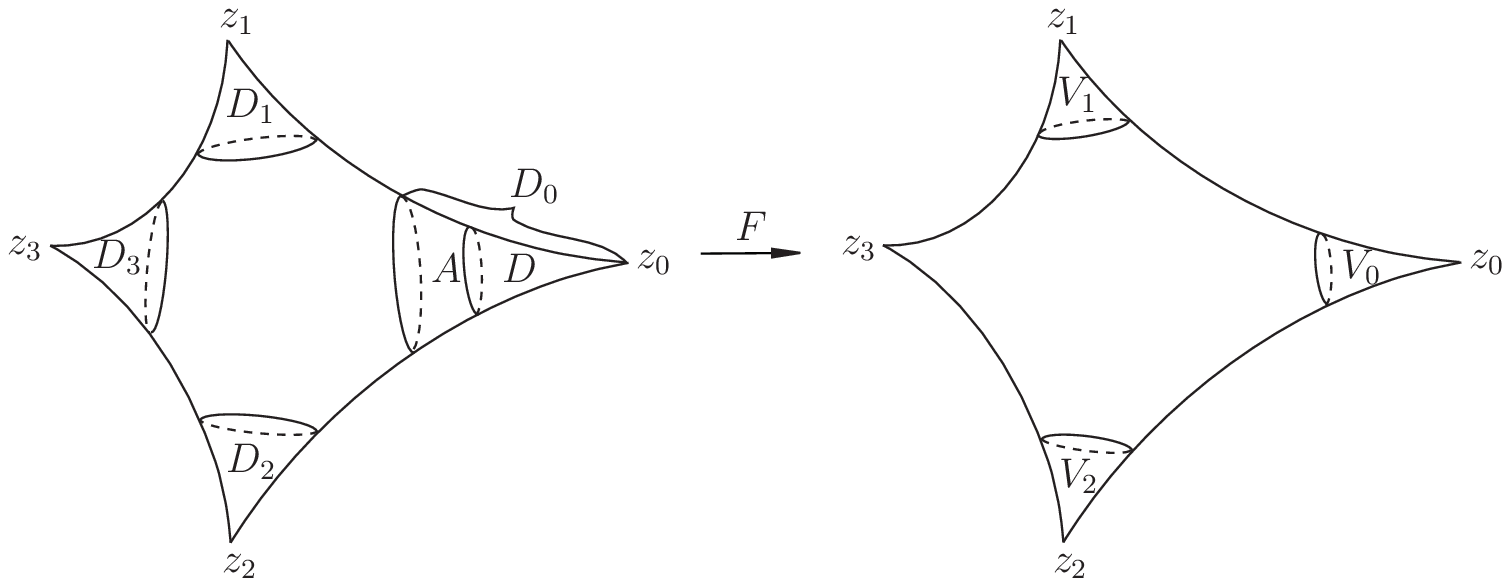}
\begin{center}{\sf Figure 5. The construction of $F$}
\end{center}
\end{figure}

We begin with the quadratic rational map
$$
h(z)=\frac{1}{(z-1)^2}.
$$
It has two critical points $z_1=1$ and $z_2=\infty$. Both of them are contained in the cycle
$$
z_1\mapsto z_2\mapsto z_0=0\mapsto z_1.
$$
So $\PPP_h=\{0,1,\infty\}$ and $\JJJ_h$ is connected.

Pick a B\"{o}ttcher disk $z_0\in V_0\Subset\FFF_h$. Then there are B\"{o}ttcher disks $V_1\ni z_1$ and $V_2\ni z_2$ such that $\VVV\Subset h^{-1}(\VVV)$, where $\VVV=\cup_{i=0}^2 V_i$.

Denote $z_3=2$. Then $h(z_3)=z_1$. The set $h^{-1}(\VVV)$ has $4$ components, denote them by $D_i$ such that $z_i\in D_i$.

Pick a Jordan curve $\al\subset D_0$ such that it separates the point $z_0$ from $\partial D_0$. Then $D_0\smm\al$ has two components: an annulus $A$ and a Jordan domain $D$.

Define a branched covering $F$ by the following:

(1) $F=h$ on $\cbar\smm D_0$,

(2) $F:\, A\to V_1$ is a branched covering with degree $2$, and

(3) $F:\, D\to \cbar\smm\overline{V}_1$ is a homeomorphism such that $F(z_0)=z_3$ and $F$ is holomorphic in a neighborhood of $z_0$.

Now $\{z_0, z_1, z_2, z_3\}$ is a super-attracting cycle of $F$. We may require that the two critical values of $F:\, A\to V_1$ are contained in a B\"{o}ttcher disk $U_1$ at $z_1$. Then $F$ is a semi-rational map with $\PPP'_F=\{z_0, z_1, z_2, z_3\}$.

There exist B\"{o}ttcher disks $U_i\ni z_i$ ($i=0,2,3$) such that $\UUU\Subset h^{-1}(\UUU)$, where $\UUU=\cup_{i=0}^3 U_i$. Since the two critical values of $F:\, A\to V_1$ are contained in $U_1$, we have $\PPP_F\subset\UUU$. Set $\LLL=\cbar\smm\UUU$. It is easy to check that $(\UUU, \LLL)$ is a canonical decomposition of $F$. The set $F^{-1}(\LLL)$ has two components, one is Q-type and the other is A-type.

Denote $\g_i=\partial U_i$ ($i=0,1,2,3$). Then $\G_F=\{\g_i\}_{i=0}^3$ is a canonical multicurve. Its transition matrix is:
$$
M=\left(\begin{array}{cccc}
0 & 2 & 0 & 1 \\
0 & 0 & \frac{1}{2} & 0 \\
\frac{1}{2} & 0 & 0& 0 \\
0 & 1 & 0 & 0
\end{array}\right).
$$

By a direct computation, we have
$$
M^3v=\left(\begin{array}{cccc}
\frac{1}{2} & 0 & \frac{1}{2} & 0 \\[5pt]
0 & \frac{1}{2} & 0 & \frac{1}{4} \\[5pt]
0 & \frac{1}{2} & \frac{1}{2} & 0 \\[5pt]
\frac{1}{4} & 0 & 0 & 0
\end{array}\right)
\left(\begin{array}{c}
v_1 \\[5pt]
v_2 \\[5pt]
v_3 \\[5pt]
v_4
\end{array}\right)
=
\left(\begin{array}{c}
\frac{1}{2}v_1+\frac{1}{2}v_3 \\[5pt]
\frac{1}{2}v_2+\frac{1}{4}v_4 \\[5pt]
\frac{1}{2}v_3+\frac{1}{2}v_2 \\[5pt]
\frac{1}{4}v_1
\end{array}\right).
$$
Choose the positive vector $v$ such that $v_4/2<v_2<v_3<v_1<4v_4$. Then $Mv<v$. So $\lambda(\G_F)=\lambda(M)<1$.

Let $\G$ be a muticurve of $F$ contained essentially in $\LLL$. Then $\G$ contains exactly one curve $\g$. Note that $F$ is a covering map from the Q-type component of $\LLL$ onto $\LLL$ with degree 2. If $\g$ separates $z_0$ from $z_2$, then $F^{-1}(\g)$ has only one component $\delta$ in the Q-type component of $F^{-1}(\LLL)$ and $\deg(F|_{\delta})=2$. So $\lambda(\G)<1$. If $\g$ does not separate $z_0$ from $z_2$, then each component of $F^{-1}(\g)$ in the Q-type component of $F^{-1}(\LLL)$ does not separate $z_1$ from $z_2$, On the other hand, $\g$ separates $z_1$ from $z_2$. Thus $\lambda(\G)=0$. Therefore $F$ is c-equivalent to a rational map $f$ by Theorem \ref{obstruction}.

One may also apply \cite[Theorem 2.1]{BCT} to show $\lambda(\G)<1$.

The Shishikura tree map $\tau:(T, X_1)\to (T, X_0)$ of $f$ is shown in Figure 6, where
$$
X_0=\{a_0,a_1,a_2,a_3, b\},\quad X_1=\{a_0, a_1, a_2, a_3, b, b_{-1}, a'_0\},
$$
and the tree map is uniquely determined by its definition on vertices:
$$
\tau: a_0\mapsto a_3\mapsto a_1\mapsto a_2\mapsto a_0,  a'_0\mapsto a_1, b_{-1}\mapsto b, b\mapsto b.
$$

\begin{figure}[htbp]\centering
\includegraphics[width=13cm]{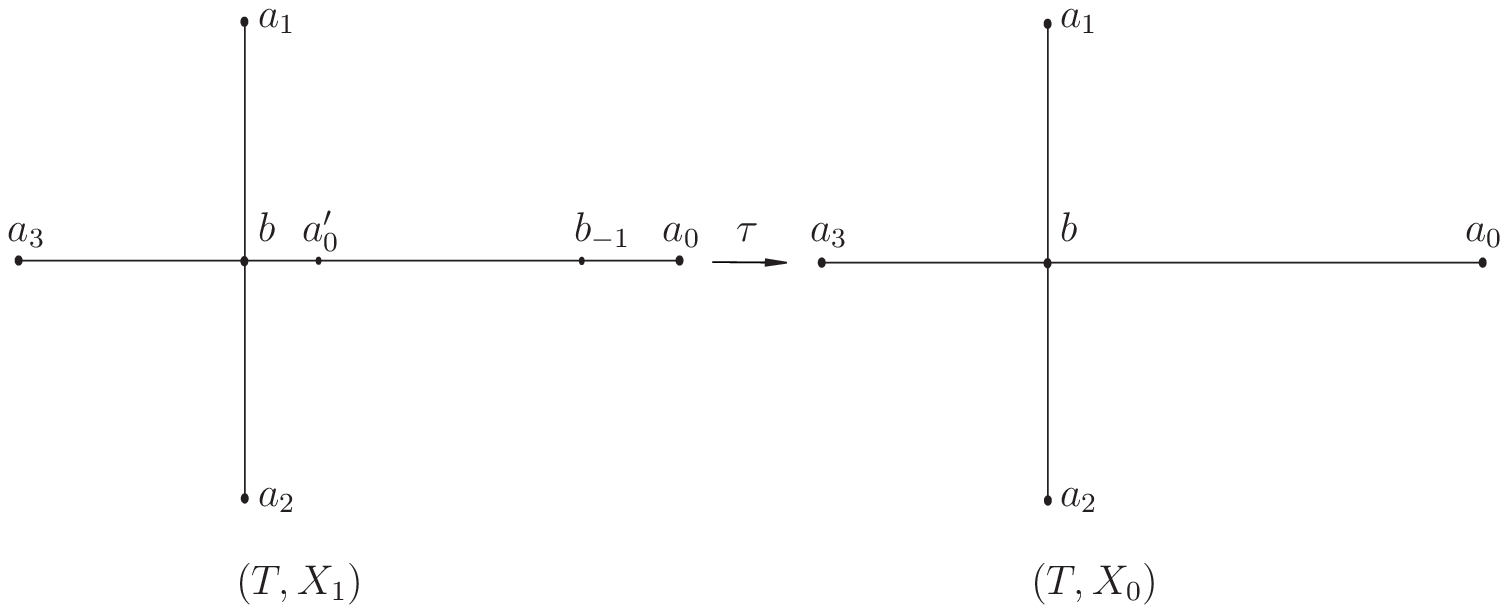}
\begin{center}{\sf Figure 6. The Shishikura tree map of $f$}
\end{center}
\end{figure}

One may refer to \cite{G} for a formula of the rational map $f$ and also the Julia set of $f$ (see Figure 11 in \cite{G} or the figure below).

\begin{figure}[htbp]\centering
\includegraphics[width=7.5cm]{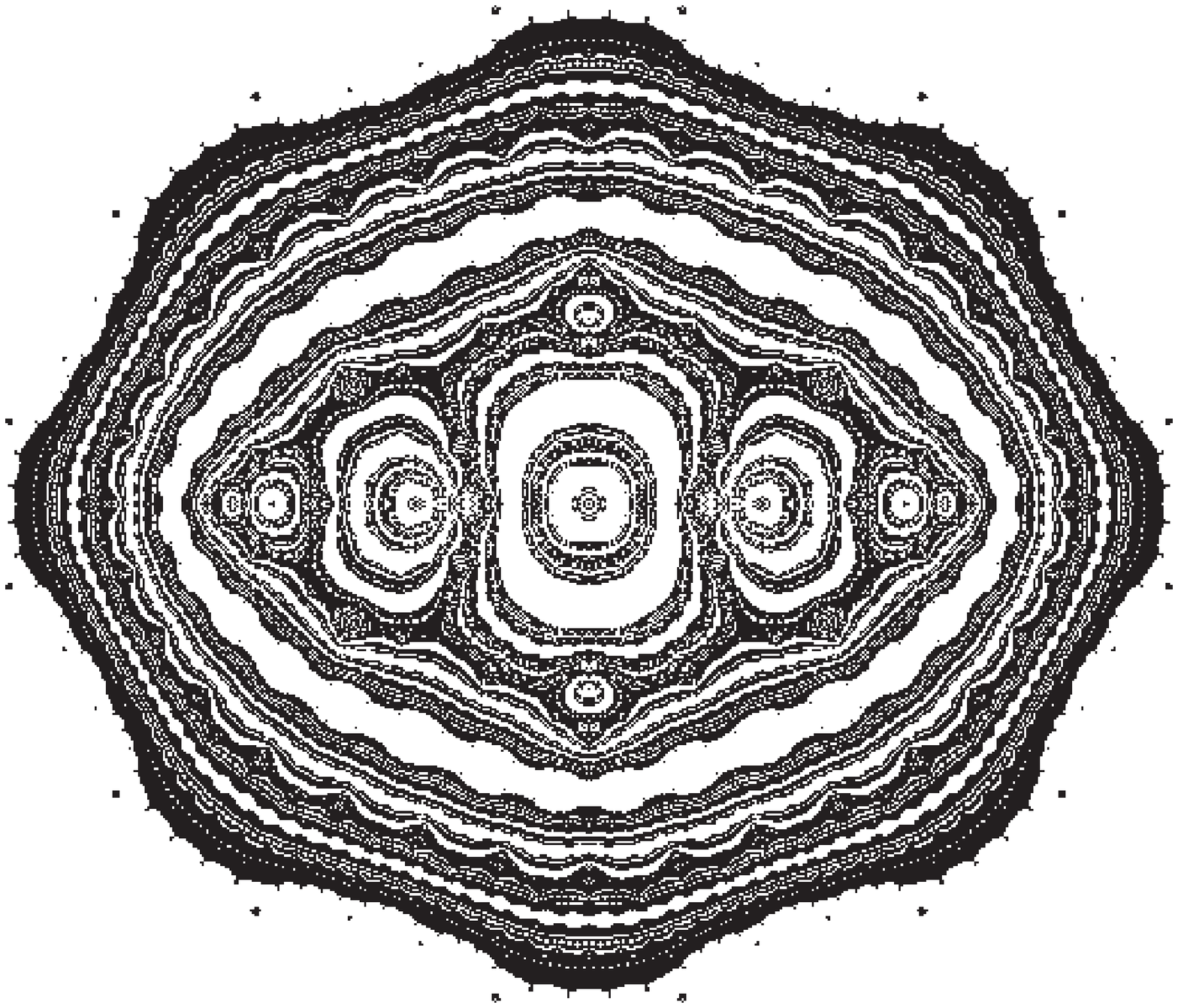}
\begin{center}{\sf Figure 7. A Persian carpet Julia set (in black)}
\end{center}
\end{figure}

 By the construction of $\Gamma_F$, it is not difficult to check that $\G_F$ is a Cantor multicurve. Thus by Lemma \ref{cantor multicurve}, the Shishikura tree map $\tau$ has infinitely many repelling periodic cycles (a repelling periodic cycle $\{x_0, x_1,\cdots, x_{20}\}$ is illustrated in Figure 8). Therefore, by Theorem \ref{tree}, the rational map $f$ has infinitely many periodic Jordan curves as buried Julia components.

\begin{figure}[htbp]\centering
\includegraphics[width=10cm]{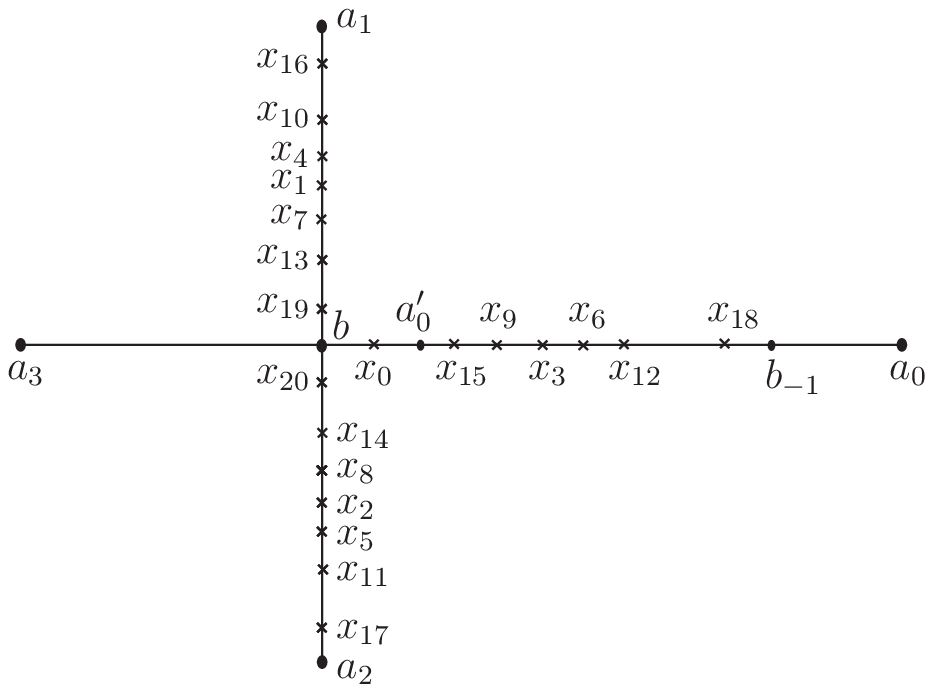}
\begin{center}{\sf Figure 8. A repelling periodic cycle of $\tau$}
\end{center}
\end{figure}

\vspace{2mm}
\noindent{\bf Remark.} We want to point out that the construction of the branched covering $F$ is different from the disc-annulus surgery in \cite{PT2}. In fact, the disc-annulus surgery is a procedure of quasi-conformal surgery. The branched covering constructed by such surgery has the property that for each point in $\cbar$, the forward orbit under the iterations of the map passes through the set consisting of non-holomorphic points at most a bounded number of times, and then by means of Shishikura's principle (see Lemma 1 in \cite{Shi1}), one could obtain a rational map. But in our situation, the branched covering $F$ does not necessarily satisfy that condition, and we apply Theorem \ref{obstruction} relating to Thurston's theorem to get a rational map realizing $F$.

\vskip 0.24cm
{\noindent\it Proof of Theorem \ref{main}}.

Starting with the rational map $f$ constructed as above and applying Theorem \ref{self-grafting} successively, we obtain a sequence of rational maps $\{f_n\}$ such that $\deg f_n=3$ and $N(f_n)=n$ for $n\ge 1$.

More precisely, we know that $f_1=f$ has infinitely many periodic Jordan curves as buried Julia components and the Shishikura tree map $\tau_{f_1}$ has infinitely many repelling periodic cycles. Then by Theorem \ref{self-grafting}, we obtain the rational map $f_2$ with degree 3 such that $N(f_2)=N(f_1)+1=2$, and the Shishikura tree map $\tau_{f_2}$ has infinitely many repelling periodic cycles. By Theorem \ref{tree}, $f_2$ has infinitely many periodic Jordan curves as buried Julia components. So we could apply Theorem \ref{self-grafting} to $f_2$. Inductively, we get the sequence $\{f_n\}$.

Fix $n\ge 1$, for any integer $d>3$, applying the disc-annulus surgery in \cite{PT2}, we could obtain a rational map $g_n$ such that $\deg g_n=d$ and $N(g_n)=n$. The following is a detailed construction of $g_n$.

Let $(\UUU,\LLL)$ be a canonical decomposition of $f_n$. Let $U$ be a Q-type periodic component of $\UUU$ and $U_1$ be a non-periodic component of $f_n^{-1}(U)$. Take a quasi-disk $\Omega\Subset U_1\smm\PPP_{f_n}$ such that $f_n$ is injective on $\overline{\Omega}$. Pick another quasi-disk $\Delta\Subset\Omega$. Then there is a quasi-regular branched covering $G$ of $\cbar$ with $\deg G=d$ such that

(1) $G=f_n$ on $\cbar\smm\overline{\Omega}$,

(2) $G:\, \Delta\to\cbar\smm f_n(\Omega)$ is a holomorphic proper map with degree $d-3$, and

(3) $G:\, \Omega\smm\overline{\Delta}\to f_n(\Omega)$ is a quasi-regular branched covering with degree $d-2$.

Refer to Figure 9 for the construction of $G$. It is clear that the forward orbit of any point under $G$ passes through $\overline{\Omega}\smm\Delta$ at most once. Thus by Shishikura's principle (Lemma 1 in \cite{Shi1}), $G$ is quasi-conformally conjugated to a rational map $g_n$.

\begin{figure}[htbp]\centering
\includegraphics[width=12cm]{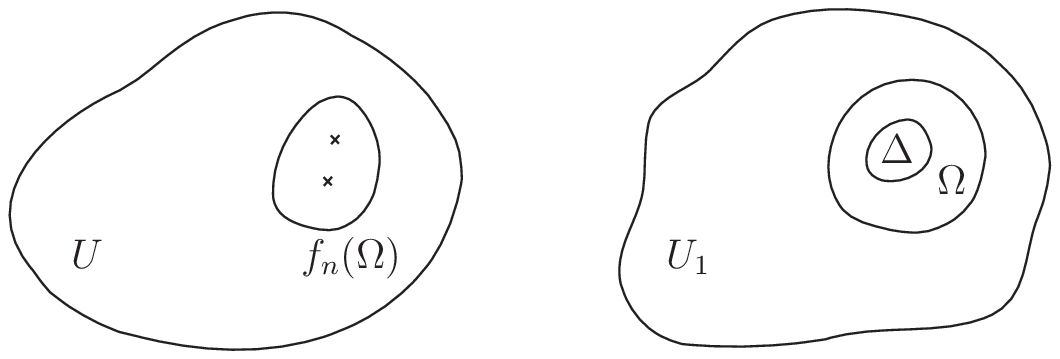}
\begin{center}{\sf Figure 9. The construction of $G$}
\end{center}
\end{figure}

Obviously, $(\UUU,\LLL)$ is still a canonical decomposition of $G$ and the Shishikura tree map of $G$ is the same as $\tau_{f_n}$. Thus $N(g_n)=N(f_n)=n$. \qed

\vspace{2mm}
\noindent{\it Acknowledgements}. The authors would like to express the sincere gratitude to the referees for valuable and helpful suggestions.

\noindent
Guizhen Cui \\
College of Mathematics and Statistics, Shenzhen University,\\
Shenzhen 518061, P. R. China;\\
HCMS and NCMIS, Academy of Mathematics and Systems Science, \\
Chinese Academy of Sciences, Beijing 100190, P. R. China.\\
gzcui@math.ac.cn

\vskip 0.24cm

\noindent
Wenjuan Peng \\
HLM and NCMIS, Academy of Mathematics and Systems Science, \\
Chinese Academy of Sciences, Beijing 100190, P. R. China.\\
wenjpeng@amss.ac.cn

\end{document}